\newtheorem{theorem}{Theorem}[section]
\newtheorem{corollary}{Corollary}
\newtheorem{lemma}[theorem]{Lemma}
\newtheorem{proposition}{Proposition}
\newtheorem{example}{Example}
\theoremstyle{definition}
\newtheorem{definition}[theorem]{Definition}
\newtheorem{remark}{Remark}
\DeclareMathOperator{\sech}{sech}
\begin{document}

\title[Restrictions and Stability of Time-Delayed Dynamical Networks]{Restrictions and Stability of Time-Delayed Dynamical Networks}

%    author one information
\author{L. A. Bunimovich$^1$}
%\address{ABC Math Program and School of Mathematics, Georgia Institute of Technology, 686 Cherry Street, Atlanta, GA 30332, USA}
%\email{bunimovich@math.gatech.edu}

%    author one information
\author{B. Z. Webb$^2$}

%\address{ABC Math Program and School of Mathematics, Georgia Institute of Technology, 686 Cherry Street, Atlanta, GA 30332, USA}
%\email{bwebb@math.gatech.edu}

\keywords{Graph Transformations, Spectral Equivalence, Global Stability, Dynamical Network, Time-Delayed Dynamical Systems}

\subjclass[2000]{05C50, 15A18, 37C75}

\maketitle

\begin{center}
$^{1}$ \smaller{ABC Math Program and School of Mathematics, Georgia Institute of Technology, 686 Cherry Street, Atlanta, GA 30332, USA}\\
$^{2}$ Department of Mathematics, 308 TMCB, Brigham Young University, Provo, UT 84602, USA\\
E-mail: bunimovih@math.gatech.edu and bwebb@math.byu.edu
\end{center}

\begin{abstract}
This paper deals with the global stability of time-delayed dynamical networks. We show that for a time-delayed dynamical network with non-distributed delays the network and the corresponding non-delayed network are both either globally stable or unstable. We demonstrate that this may not be the case if the network's delays are distributed. The main tool in our analysis is a new procedure of dynamical network restrictions. This procedure is useful in that it allows for improved estimates of a dynamical network's global stability. Moreover, it is a computationally simpler and much more effective means of analyzing the stability of dynamical networks than the procedure of isospectral network expansions introduced in \cite{BW2012}. The effectiveness of our approach is illustrated by applications to various classes of Cohen-Grossberg neural networks.
\end{abstract}

\section{Introduction}
The study of networks in nature, science, and technology is an extremely active area of research. Because of the structural complexity and considerable size of many networks a good deal of this research has been devoted to understanding the static features of these systems \cite{Albert02,Dorogovtsev03,Faloutsos99,Newman06,Porter09,Strogatz03,Watts99}. However, most real networks are dynamic. That is, each network element has an associated state that changes with time. Additionally, because of finite processing speeds and transmission of signals over distances, the dynamics of such networks are inherently time-delayed.

To investigate the dynamical properties of networks, a general approach was introduced in \cite{Afriamovich07}. The major idea in this work is that a network's dynamics can be analyzed in terms of three key features; (i) the internal \emph{local} dynamics of the network elements, (ii) the \emph{interactions} between the network elements, and (iii) the \emph{topology} or structure of the graph of interactions of the network. In \cite{Afriamovich07} it was shown that each of these network features (including the network topology) can be interpreted and investigated as a dynamical system.

In this paper we extend this mathematical formalism to investigate the dynamics of time-delayed dynamical networks. That is, we consider dynamical networks in which the network's interaction includes time-delays. In such systems it is therefore possible for network elements to interact over large as well as multiple time-scales, which can have a significant impact on the network's dynamics.

The particular type of dynamics we consider in this paper is whether a time-delayed dynamical network has an evolution that is \emph{stable}, i.e. whether a network has a globally attracting fixed point. Specifically, our goal will be to describe what type of time-delayed interactions stabilize a given set of local systems. Our first major task in this regard is to understand how transforming a network, by either adding or removing time-delays to its interaction, effects the network's stability. A second related task is to determine under what conditions such transformations can be used to gain new information about the original network.

With respect to this first task we note that by modifying an interaction's time-delays it is possible to destabilize a stable network or conversely, stabilize a network that has unstable dynamics. However, one of the main results in this paper is that if a time-delayed dynamical network is known to be stable then the undelayed version of this network will also have stable dynamics (see theorem \ref{theorem3}). That is, by removing the time-delays in a stable network's interaction we do not change the network's dynamics in any essential way.

Conversely, the addition of delays to a network's interaction may result in a network with unstable dynamics even if the original network has a globally attracting fixed point. In the case where the state of each network element $x_i$ depends on at most one of the previous states of any element $x_j$ we say the network has non-distributed delays and has distributed delays otherwise (see definition \ref{SD}). We show that if a time-delayed dynamical network is stable and has a non-distributed interaction then any change in the interaction's time-delays does not effect the network's stability (see theorem \ref{theorem3.45}.)

This result allows us to analyze the dynamics of any time-delayed network as a network without delays if it has a non-distributed interaction. This is advantageous from a computational point of view since it is significantly simpler to investigate the stability of an undelayed network than a network with delays.

With this theory in place we consider the class of dynamical networks known as Cohen-Grossberg neural networks \cite{MCohen1983} whose stability has received a considerable amount of attention. See for instance \cite{JCao2005,MGupta1994,CLi2010,LTao2011,SChena2009,LWang2005}. By applying our theory to such systems we are able to derive new criteria for the stability of both the delayed and undelayed versions of this class of networks.

To further apply this theory we note that one of the major obstacle in determining the dynamic behavior of a network (or high-dimensional system) is that the information needed to do so is spread throughout the network components. In \cite{BW2012} it has been shown that a network can be modified in a number of ways that preserve its dynamics while concentrating this network information. This concept of a \emph{dynamical network expansion} allows for improved estimates on whether a network has a globally attracting fixed point. However, this improved ability to determine a network's stability comes at the price of needing to analyze the stability of a higher-dimensional dynamical network.

By combining the theory of dynamical network expansions with our present results on time-delayed networks we are able to develop a new theory of \emph{dynamical network restrictions}. Such restrictions are lower-dimensional versions of a given network which have a less complicated structure of interactions. We show that if a dynamical network is stable then any restriction of this network inherits this stability (see theorem \ref{theorem5}).

To investigate the stability of a network via one of its restrictions we introduce the notion of a \emph{basic structural set} which is related to the idea of a non-distributed interaction. The major result in this direction is that if a network is restricted to a \emph{basic structural set} and the restricted network is known to have stable dynamics then the same holds for the original network (see theorem \ref{theorem6}).

As with network expansions, such restrictions allow for improved estimates of a dynamical network's global stability. However, the computational effort required to construct and analyze these restrictions is far less when compared with that of dynamical network expansions.

One reason for this simplicity is that dynamical network expansions essentially preserve the entire spectrum of the network. However, to analyze the global stability of a network one does not need to know its entire spectrum but rather only the network's spectral radius. As we have a relationship between the spectral radius of a network and the network restricted to a basic structural set we are able to analyze the stability of one in terms of the other although the two may have very different spectra.

The results and topics in this paper are organized as follows. Section 2 introduces the concept of a dynamical network and gives a sufficient condition under which such systems have a globally attracting fixed point. Section 3 then defines a time-delayed dynamical network and extends this stability criteria to networks with time-delays. Following this section 4 considers how changes to a network's time-delays effects the network's stability and introduces the concept of a non-distributed interaction.

The second half of the paper, consisting of sections 5 and 6, integrates the theory of dynamical network expansions in section 5 with the results of sections 3 and 4. The result, given in section 6, is the notion of a dynamical network restriction and that of a basic structural set, which are used in this section to obtain improved stability estimates of both delayed and undelayed dynamical networks.

Lastly, we note that each of the results in this paper is illustrated by examples. Moreover, although our procedure deals with matrices and is therefore linear in nature, it does not in fact have this restriction and is applicable without any modifications to general nonlinear dynamical systems.

\section{Dynamical Networks and Global Stability}

As mentioned in the introduction, dynamical networks or networks of interacting dynamical systems are composed of (i) local dynamical systems which have their own (local intrinsic) dynamics, (ii) interactions between these local systems (elements of the network), and (iii) the graph of interactions (topology of the network).
\subsection{Dynamical Networks}
Following the approach in \cite{Afriamovich07,BW2012}, dynamical networks are defined as follows. Let $i\in \mathcal{I}=\{1,\dots,n\}$ and $\varphi_i: X_i\rightarrow X_i$ be maps on the complete metric space $(X_i,d)$ where
\begin{equation}\label{eq0.1}
L_i=\sup_{x_i\neq y_i\in X_i}\frac{d(\varphi_i(x_i),\varphi_i(y_i))}{d(x_i,y_i)}<\infty.
\end{equation}
Let $(\varphi,X)$ denote the direct product of the local systems $(\varphi_i,X_i)$ over $\mathcal{I}$ on the complete metric space $(X,d_{max})$ where for $\textbf{x},\textbf{y}\in X$
$$d_{max}(\textbf{x},\textbf{y})=\max_{i\in\mathcal{I}}\{d(x_i,y_i)\}.$$

\begin{definition}\label{dynamicalnetwork}
A map $F:X \rightarrow X$ is called an \textit{interaction} if for every $j\in \mathcal{I}$ there exists a nonempty collection of indices $\mathcal{I}_j\subseteq\mathcal{I}$ and a continuous function
$$F_j:\bigoplus_{i\in \mathcal{I}_j} X_i\rightarrow X_j,$$
where the map $F$ is defined as follows:
$$F(\mathbf{x})_j=F_j(\textbf{x}|_{\mathcal{I}_j}), \ \ j\in \mathcal{I}, \ \ \text{and} \ \ \textbf{x}\in X.$$
The superposition $\mathcal{F}=F\circ \varphi$ generates the dynamical system $(\mathcal{F},X)$ which is a \textit{dynamical network}.
\end{definition}

Suppose $F$ satisfies the following Lipschitz condition for finite constants $\Lambda_{ij}\geq 0:$
\begin{equation}\label{eq2.3}
d\big(F_j(\textbf{x}|_{\mathcal{I}_j}),F_j(\textbf{y}|_{\mathcal{I}_j})\big)\leq \sum_{i\in \mathcal{I}_j} \Lambda_{ij} d(x_i,y_i)
\end{equation}
for all $\textbf{x},\textbf{y}\in X$ where $\textbf{x}|_{\mathcal{I}_j}$ is the restriction of $\mathbf{x}\in X$ to $\bigoplus_{i\in \mathcal{I}_j} X_i$.

The Lipschitz constants $\Lambda_{ij}$ in equation (\ref{eq2.3}) form the matrix $\Lambda\in\mathbb{R}^{n\times n}$ where each $\Lambda_{ij}\geq 0$ and where $\Lambda_{ij}=0$ if $i\notin\mathcal{I}_j$. For the dynamical network $(\mathcal{F},X)$ we call the matrix
$$\Lambda^T\cdot diag[L_1,\dots,L_n]=  \left( \begin{array}{cccc}
\Lambda_{11}L_1 & \dots & \Lambda_{n1}L_n \\
\vdots & \ddots & \vdots \\
\Lambda_{1n}L_1 & \dots & \Lambda_{nn}L_n \end{array} \right)$$
a \emph{stability matrix} of $(\mathcal{F},X)$.

\begin{definition}
The dynamical network $(\mathcal{F},X)$ has a \textit{globally attracting fixed point} $\tilde{\textbf{x}}\in X$ if for any $\textbf{x}\in X$, $$\displaystyle{\lim_{k\rightarrow\infty}d_{max}\big(\mathcal{F}^k(\textbf{x}),\tilde{\textbf{x}}\big)=0}.$$ If $(\mathcal{F},X)$ has a globally attracting fixed point we say it is \textit{globally stable}.
\end{definition}

The local systems $(\varphi,X)$ are said to be \textit{stable} if $(\varphi,X)$ has a globally attracting fixed point and are said to be \textit{unstable} otherwise. The interaction $F:X\rightarrow X$ is said to \textit{stabilizes} the local systems $(\varphi,X)$ if the local systems are unstable but the dynamical network $(\mathcal{F},X)$ has a globally attracting fixed point. If the local systems $(\varphi,X)$ are stable and $(\mathcal{F},X)$ has a globally attracting fixed point we say the interaction $F:X\rightarrow X$ \textit{maintains} the stability of $(\varphi,X)$.

One of the major goals of this paper is to find sufficient conditions under which an interaction stabilizes a set of local systems. For a matrix $A\in\mathbb{R}^{n\times n}$ let $\rho(A)$ denote its \textit{spectral radius}, i.e. if $\sigma(A)$ are the eigenvalues of $A$ then
$$\rho(A)=\max\{|\lambda|:\lambda\in\sigma(A)\}.$$
The following theorem is found in \cite{BW2012} (see theorem 2.5).

\begin{theorem}\label{stability}
Suppose $A$ is a stability matrix the dynamical network $(\mathcal{F},X)$. If $\rho(A)<1$ then the dynamical network $(\mathcal{F},X)$ has a globally attracting fixed point.
\end{theorem}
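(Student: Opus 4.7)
The proof plan is to recast the spectral radius condition as a genuine contraction in a suitable equivalent metric on $X$, and then invoke the Banach fixed point theorem. The starting observation is that by combining (\ref{eq0.1}) and (\ref{eq2.3}) one gets the componentwise estimate
\begin{equation*}
d\bigl(\mathcal{F}(\mathbf{x})_j,\mathcal{F}(\mathbf{y})_j\bigr)=d\bigl(F_j(\varphi(\mathbf{x})|_{\mathcal{I}_j}),F_j(\varphi(\mathbf{y})|_{\mathcal{I}_j})\bigr)\le\sum_{i\in\mathcal{I}_j}\Lambda_{ij}L_i\,d(x_i,y_i)=\sum_i A_{ji}\,d(x_i,y_i).
\end{equation*}
If we let $\mathbf{v}(\mathbf{x},\mathbf{y})\in\mathbb{R}^n_{\ge0}$ denote the vector with entries $d(x_i,y_i)$, this reads $\mathbf{v}(\mathcal{F}(\mathbf{x}),\mathcal{F}(\mathbf{y}))\le A\,\mathbf{v}(\mathbf{x},\mathbf{y})$ componentwise. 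The nonnegativity of $A$ is essential here, since it is what allows such componentwise inequalities to be iterated.

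Next I would exploit $\rho(A)<1$ to produce a strictly positive weight vector $\mathbf{u}\in\mathbb{R}^n_{>0}$ and a constant $c<1$ with $A\mathbf{u}\le c\,\mathbf{u}$. For this, since $\rho(A)<1$ the Neumann series $\sum_{k\ge0}A^k$ converges and equals $(I-A)^{-1}$, whose entries are nonnegative. Choosing any strictly positive $\mathbf{w}$ (for instance $\mathbf{w}=(1,\dots,1)^T$) and setting $\mathbf{u}=(I-A)^{-1}\mathbf{w}$ gives $\mathbf{u}>0$ and $A\mathbf{u}=\mathbf{u}-\mathbf{w}$, so one may take $c=\max_j(1-w_j/u_j)<1$. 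Define the weighted metric
\begin{equation*}
d'(\mathbf{x},\mathbf{y})=\max_{i\in\mathcal{I}}\frac{d(x_i,y_i)}{u_i},
\end{equation*}
which is equivalent to $d_{max}$ since the $u_i$ are positive and finite, so $(X,d')$ is still a complete metric space. Using the componentwise estimate together with $A\mathbf{u}\le c\mathbf{u}$, a direct computation shows
\begin{equation*}
d'(\mathcal{F}(\mathbf{x}),\mathcal{F}(\mathbf{y}))=\max_j\frac{1}{u_j}\sum_i A_{ji}\,d(x_i,y_i)\le d'(\mathbf{x},\mathbf{y})\cdot\max_j\frac{(A\mathbf{u})_j}{u_j}\le c\,d'(\mathbf{x},\mathbf{y}).
\end{equation*}

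Thus $\mathcal{F}$ is a contraction on the complete metric space $(X,d')$, and the Banach fixed point theorem produces a unique fixed point $\tilde{\mathbf{x}}\in X$ with $d'(\mathcal{F}^k(\mathbf{x}),\tilde{\mathbf{x}})\to 0$ for every $\mathbf{x}\in X$; equivalence of $d'$ and $d_{max}$ then gives global attraction in the original metric. The only step requiring any real thought is the construction of the weight vector $\mathbf{u}$; everything else is a routine bookkeeping exercise once one notices that the spectral radius condition must be converted into a true Lipschitz-type contraction, which cannot be achieved with $d_{max}$ itself since different components contract at different effective rates. An alternative, slightly less elegant route avoids introducing $d'$ and instead iterates the componentwise inequality directly to obtain $\mathbf{v}(\mathcal{F}^k(\mathbf{x}),\mathcal{F}^k(\mathbf{y}))\le A^k\mathbf{v}(\mathbf{x},\mathbf{y})$, uses Gelfand's formula $\|A^k\|^{1/k}\to\rho(A)<1$ to conclude that $A^k\to 0$, and then deduces via a telescoping argument and summability of $\sum_k A^k$ that the orbit of any point is Cauchy with a unique common limit; continuity of $\mathcal{F}$ (which follows from continuity of each $F_j$ and finiteness of each $L_i$) identifies this limit as the desired globally attracting fixed point.
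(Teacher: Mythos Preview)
Your argument is correct. The only cosmetic slip is in the displayed chain for $d'(\mathcal{F}(\mathbf{x}),\mathcal{F}(\mathbf{y}))$: the first ``$=$'' should be ``$\le$'', since you only have the inequality $d(\mathcal{F}(\mathbf{x})_j,\mathcal{F}(\mathbf{y})_j)\le\sum_i A_{ji}\,d(x_i,y_i)$ from the preceding estimate, not an equality. Everything else---the Neumann-series construction of the strictly positive weight vector $\mathbf{u}$, the equivalence of $d'$ with $d_{\max}$, and the application of Banach's theorem---is sound.

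As for comparison with the paper: there is nothing to compare. The paper does not prove Theorem~\ref{stability}; it simply quotes the result from \cite{BW2012} (their Theorem~2.5). Your weighted-metric contraction argument is the standard way such statements are established, and your alternative route via iteration of the componentwise bound and Gelfand's formula is equally standard; either would serve as a self-contained proof here.
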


In other words, theorem \ref{stability} states that if $\rho(A)<1$ then the interaction $F$ stabilizes (or maintains the stability) of the local systems $(\varphi,X)$.

Suppose the maps $\varphi:X\rightarrow X$ and $F:X\rightarrow X$ are continuously differentiable and each $X_i\subseteq\mathbb{R}$ is a closed interval. If the constants
\begin{align}\label{eq2}
L_i&=\max_{\textbf{x}\in X}|\varphi^\prime_{i}(x_i)|<\infty\\
\Lambda_{ij}&=\max_{\textbf{x}\in X}|(DF)_{ji}(\textbf{x})|<\infty\label{eq2.01}
\end{align}
where $DF$ is the matrix of first partial derivatives of $F$ then we say $\mathcal{F}\in C^1_\infty(X)$.
For $\mathcal{F}\in C^1_\infty(X)$ the matrix $\Lambda^T\cdot diag[L_1,\dots,L_n]$ given by (\ref{eq2}) and (\ref{eq2.01}) is a stability matrix of $(\mathcal{F},X)$. As this matrix is unique then for any $\mathcal{F}\in C^1_\infty(X)$ we let $$\rho(\mathcal{F})=\rho(\Lambda^T\cdot diag[L_1,\dots,L_n]).$$

For matrices $A,B\in\mathbb{R}^{n\times n}$ we write $A\leq B$ if $A_{ij}\leq B_{ij}$ for each $1\leq i,j\leq n$. The matrix $A\in\mathbb{R}^{n\times n}$ is \emph{nonnegative} if the zero matrix $0\leq A$. If it is known that $0\leq A\leq B$ then $\rho(A)\leq\rho(B)$ (see \cite{Horn85} for instance). This can be used to show the following.

\begin{proposition}\label{proposition1}
Let $\mathcal{F}\in C^1_\infty(X)$. If $A$ is any stability matrix of $(\mathcal{F},X)$ then $\rho(\mathcal{F})\leq\rho(A)$.
\end{proposition}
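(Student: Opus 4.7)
The plan is to show that the stability matrix arising from the canonical (derivative-based) Lipschitz constants in (\ref{eq2}) and (\ref{eq2.01}) is entrywise smaller than any other stability matrix, and then invoke the stated monotonicity of the spectral radius on nonnegative matrices.

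First I would note that since each $X_i\subseteq\mathbb{R}$ is a closed interval, the constants $L_i$ in (\ref{eq0.1}) and in (\ref{eq2}) agree (both equal $\max_{x_i\in X_i}|\varphi_i'(x_i)|$ by the mean value theorem). So in every stability matrix the diagonal factor $\mathrm{diag}[L_1,\dots,L_n]$ is the same; the freedom in the choice of stability matrix lies entirely in the Lipschitz constants $\tilde\Lambda_{ij}$ satisfying (\ref{eq2.3}).

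Next I would show that the canonical choice $\Lambda_{ij}=\max_{\mathbf{x}\in X}|(DF)_{ji}(\mathbf{x})|$ from (\ref{eq2.01}) is the smallest admissible choice. Fix $i\in\mathcal{I}_j$ and pick points $\mathbf{x},\mathbf{y}\in X$ that agree in every coordinate except the $i$-th. Applying the one-variable mean value theorem to the $C^1$ function $t\mapsto F_j(\mathbf{x}|_{\mathcal{I}_j\setminus\{i\}},t)$ on the closed interval between $x_i$ and $y_i$ yields some intermediate point $\xi$ for which
\begin{equation*}
\bigl|F_j(\mathbf{x}|_{\mathcal{I}_j})-F_j(\mathbf{y}|_{\mathcal{I}_j})\bigr|=\bigl|(DF)_{ji}(\xi)\bigr|\,|x_i-y_i|.
\end{equation*}
Since any $\tilde\Lambda_{ij}$ satisfying (\ref{eq2.3}) bounds the left-hand side by $\tilde\Lambda_{ij}|x_i-y_i|$, we conclude $|(DF)_{ji}(\xi)|\le\tilde\Lambda_{ij}$. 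Varying $\mathbf{x},\mathbf{y}$ (and thus $\xi$) over $X$ gives $\Lambda_{ij}=\max_{\mathbf{x}\in X}|(DF)_{ji}(\mathbf{x})|\le\tilde\Lambda_{ij}$, and this inequality is trivially true (both sides zero) when $i\notin\mathcal{I}_j$.

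Finally, taking transposes preserves the entrywise inequality $\Lambda\le\tilde\Lambda$, and right-multiplication by the nonnegative diagonal matrix $\mathrm{diag}[L_1,\dots,L_n]$ preserves it as well, so
\begin{equation*}
0\le \Lambda^T\cdot\mathrm{diag}[L_1,\dots,L_n]\le \tilde\Lambda^T\cdot\mathrm{diag}[L_1,\dots,L_n]=A.
\end{equation*}
The monotonicity of the spectral radius on nonnegative matrices recalled just before the proposition then yields $\rho(\mathcal{F})=\rho(\Lambda^T\cdot\mathrm{diag}[L_1,\dots,L_n])\le\rho(A)$, as required. The main subtle point is the derivative-bounding step above: one has to restrict to variations in a single coordinate to compare a componentwise Lipschitz constant with a partial derivative, but the product structure of $X$ and the availability of the mean value theorem on intervals make this routine.
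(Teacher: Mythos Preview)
Your proof is correct and follows essentially the same approach as the paper: restrict to single-coordinate variations to show that the canonical partial-derivative constants $\Lambda_{ij}$ are entrywise minimal among admissible Lipschitz constants, then invoke the monotonicity of $\rho$ on nonnegative matrices. The only cosmetic difference is that the paper takes $h\to 0$ directly (limit definition of the partial derivative) rather than invoking the mean value theorem, and it also treats the $L_i$ as potentially variable, showing $L_i\le\tilde L_i$; your observation that the $L_i$ are already fixed by (\ref{eq0.1}) is consistent with the paper's own definition of a stability matrix.
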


\begin{proof}
Suppose $\mathcal{F}\in C^1_\infty(X)$ and that $A=\tilde{\Lambda}^T\cdot diag[\tilde{L}_1,\dots,\tilde{L}_n]$ is a stability matrix of $(\mathcal{F},X)$. Let $\textbf{e}_i$ be the $i$th standard basis vector of $\mathbb{R}^n$. For $\textbf{x}\in X$ let $h\neq 0$ such that $\textbf{y}=\textbf{x}+h\textbf{e}_i\in X$. Then by (\ref{eq2.3}),
$$d\big(F_j(\textbf{x}|_{\mathcal{I}_j}),F_j(\textbf{y}|_{\mathcal{I}_j})\big)\leq \tilde{\Lambda}_{ij} d(x_i,x_i+h)=\tilde{\Lambda}_{ij}|h|.$$
Hence, $|F(\textbf{x})_j-F(\textbf{y})_j|/|h|\leq\tilde{\Lambda}_{ij}$. Taking the limit as $h\rightarrow 0$ implies
$$|DF_{ji}(\textbf{x})|\leq\tilde{\Lambda}_{ij} \ \text{for all} \ \textbf{x}\in X.$$
Letting $\Lambda_{ij}=\max_{\textbf{x}\in X}|DF_{ji}(\textbf{x})|$ then $0\leq\Lambda\leq\tilde{\Lambda}$.

Similarly, one can show that $L_i=\max_{\textbf{x}\in X}|\varphi^\prime_{i}(x_i)|\leq \tilde{L}_i$. Hence, the matrix $\Lambda^T\cdot diag[L_1,\dots,L_n]\leq A$ implying $\rho(\mathcal{F})\leq\rho(A)$.
\end{proof}

For $\mathcal{F}\in C^1_\infty(X)$, proposition \ref{proposition1} implies that $\rho(\mathcal{F})$ is optimal for directly determining via theorem \ref{stability} whether $(\mathcal{F},X)$ is globally stable.

\subsection{Cohen-Grossberg Neural Networks}
Consider the following local system $(\varphi_i,\mathbb{R})$ given by
\begin{equation}\label{CGloc}
\varphi_i(x_i)=(1-\epsilon)x_i+c_i
\end{equation}
where $c_i,\epsilon\in\mathbb{R}$ and $1\leq i\leq n$. Notice that the local systems $(\varphi,\mathbb{R}^n)$ are stable if and only if $|1-\epsilon|<1$. The major question we consider is what kind of interaction stabilizes or maintains the stability of these local systems.

For the local systems (\ref{CGloc}) we are specifically interested in interactions of the form
\begin{equation}\label{CGint}
F_j(\textbf{x})=x_j+\sum_{j=1}^n W_{ij}\phi_i\Big(\frac{x_i-c_i}{1-\epsilon}\Big)
\end{equation}
where $\epsilon\neq 1$, $W\in\mathbb{R}^{n\times n}$ and $\phi_i:\mathbb{R}\rightarrow\mathbb{R}$ is any smooth sigmoidal function with Lipschitz constant $\mathcal{L}\geq 0$. The reason we study this particular interaction is that the dynamical network $(\mathcal{F},X)$ is then given by
\begin{equation}\label{CG}
\mathcal{F}_j(\mathbf{x})=(1-\epsilon)x_j+\sum_{i=1}^n W_{ij}\phi(x_i)+c_j
\end{equation}
which is a special case of the Cohen-Grossberg neural network in discrete time \cite{MCohen1983}.

For such neural networks the variable $x_i$ represents the \emph{activation} of the $i$-th neuron population. The function $\phi_i(x_i)$ describes how the neuron populations react to inputs. A typical example is the function $\phi_i(x_i)=\tanh(\mathcal{L} x_i)$, see \cite{LWang2005}. The matrix $W$ gives the interaction weights between each of the $i$-th and $j$-th neuron populations describing how the neurons are connected within the network. The constants $c_i$ indicate constant inputs from outside the system.

As mentioned in the introduction, there is considerable interest in determining stability conditions for such networks in general. See for instance \cite{JCao2005,MGupta1994,CLi2010}. To apply our theory to this problem
we denote by $|W|$ the matrix with entries $|W|_{ij}=|W_{ij}|$. Using this allows us to prove the following general stability condition for this class of Cohen-Grossberg neural networks.

\begin{theorem}\label{CGtheorem} \textbf{(Stability of Cohen-Grossberg Neural Networks)}
Let $(\mathcal{F},\mathbb{R}^n)$ be the Cohen-Grossberg network given by (\ref{CG}) where $\phi_i$ has Lipschitz constant $\mathcal{L}$. If
$|1-\epsilon|+\mathcal{L}\rho(|W|)<1$ then $(\mathcal{F},\mathbb{R}^n)$ has a globally attracting fixed point.
\end{theorem}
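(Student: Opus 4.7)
The plan is to produce an explicit stability matrix $A$ for $(\mathcal{F},\mathbb{R}^n)$, compute $\rho(A) = |1-\epsilon| + \mathcal{L}\rho(|W|)$, and then invoke Theorem~\ref{stability}. I will use the decomposition $\mathcal{F} = F\circ\varphi$ already supplied by the paper: the local maps $\varphi_i(x_i) = (1-\epsilon)x_i + c_i$ from (\ref{CGloc}) and the interaction $F_j(\mathbf{y}) = y_j + \sum_i W_{ij}\phi_i\bigl((y_i - c_i)/(1-\epsilon)\bigr)$ from (\ref{CGint}). A direct substitution of $\mathbf{y} = \varphi(\mathbf{x})$ reproduces (\ref{CG}), confirming that this really is the network under study.

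Reading off the Lipschitz data, $L_i = |1-\epsilon|$. For each $j$, the map $F_j$ depends on $y_i$ (with $i\ne j$) only through the term $W_{ij}\phi_i\bigl((y_i - c_i)/(1-\epsilon)\bigr)$, whose Lipschitz constant in $y_i$ is $|W_{ij}|\mathcal{L}/|1-\epsilon|$; for $i = j$ the leading term $y_j$ contributes an extra $1$, giving $\Lambda_{jj} \le 1 + |W_{jj}|\mathcal{L}/|1-\epsilon|$. These $\Lambda_{ij}$ satisfy (\ref{eq2.3}) by the triangle inequality. The resulting stability matrix $A = \Lambda^T\cdot\mathrm{diag}[L_1,\dots,L_n]$ has entries $A_{ji} = \mathcal{L}|W_{ij}|$ for $i\ne j$ and $A_{jj} = |1-\epsilon| + \mathcal{L}|W_{jj}|$, so
\[
A \;=\; |1-\epsilon|\,I \;+\; \mathcal{L}\,|W|^T.
\]

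The spectrum of $A$ is the $|1-\epsilon|$-shift of the spectrum of $\mathcal{L}|W|^T$. Since $|W|^T$ is entrywise nonnegative, Perron--Frobenius supplies an eigenvalue equal to $\rho(|W|^T) = \rho(|W|)$; for any other eigenvalue $\mu$ of $|W|^T$ the triangle inequality gives $\bigl||1-\epsilon| + \mathcal{L}\mu\bigr| \le |1-\epsilon| + \mathcal{L}|\mu| \le |1-\epsilon| + \mathcal{L}\rho(|W|)$, with equality achieved at $\mu = \rho(|W|)$. Hence $\rho(A) = |1-\epsilon| + \mathcal{L}\rho(|W|) < 1$ by hypothesis, and Theorem~\ref{stability} delivers the globally attracting fixed point. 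The only delicate point is the leading $y_j$ inside $F_j$, which forces the diagonal shift $|1-\epsilon|$ in $A$ and therefore the $|1-\epsilon|$ summand in the stability bound; dropping it would give the wrong matrix, so one must keep it in $\Lambda_{jj}$.
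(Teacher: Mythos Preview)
Your proof is correct and follows exactly the paper's approach: compute the Lipschitz constants $L_i=|1-\epsilon|$ and $\Lambda_{ij}$ for the decomposition $\mathcal{F}=F\circ\varphi$, assemble the stability matrix $A=|1-\epsilon|I+\mathcal{L}|W|^T$, read off $\rho(A)=|1-\epsilon|+\mathcal{L}\rho(|W|)$, and invoke Theorem~\ref{stability}. The paper's displayed $A$ carries $|W|$ rather than $|W|^T$ (an index slip that is harmless since $\rho(|W|)=\rho(|W|^T)$), and your Perron--Frobenius justification of the spectral radius identity is a touch more explicit than theirs, but otherwise the arguments are identical.
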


\begin{proof}
Assuming $\mathcal{F}$ is given by (\ref{CG}) then $\mathcal{F}\in C^1_{\infty}(\mathbb{R})$. The claim then is that for the local systems and interaction given by (\ref{CGloc}) and (\ref{CGint}) the matrix $A=\tilde{\Lambda}^T\cdot diag[L_1,\dots,L_n]$ with
\begin{align*}
&\max_{\textbf{x}\in X}|\varphi^\prime_{i}(x_i)|=L_i=|1-\epsilon|, \ \ \text{and}\\
&\max_{\textbf{x}\in X}|(DF)_{ji}(\textbf{x})|\leq\tilde{\Lambda}_{ij}=
\begin{cases}
1+\left|\frac{W_{ji}\mathcal{L}}{1-\epsilon}\right| \ \ &\text{for} \ \ i=j\\
\left|\frac{W_{ji}\mathcal{L}}{1-\epsilon}\right| \ \ &\text{for} \ \ i\neq j
\end{cases}
\end{align*}
is a stability matrix of $(\mathcal{F},X)$. To see this note that the constants $$\Lambda_{ij}=\max_{\textbf{x}\in X}|(DF)_{ji}(\textbf{x})|$$
satisfy (\ref{eq2.3}). Since $\Lambda_{ij}\leq\tilde{\Lambda}_{ij}$ then similarly the constants $\tilde{\Lambda}_{ij}$ satisfy (\ref{eq2.3}) verifying the claim.

Notice that the matrix $A$ has the form
\begin{equation}\label{M1}
A=\left[
\begin{array}{cccc}
|1-\epsilon|+|W_{11}\mathcal{L}| & |W_{12}\mathcal{L}| & \dots &  |W_{1n}\mathcal{L}|\\
|W_{21}\mathcal{L}| & |1-\epsilon|+|W_{22}\mathcal{L}| &  \dots & |W_{2n}\mathcal{L}|\\
\vdots & \vdots & \ddots & \vdots\\
|W_{n1}\mathcal{L}| & |W_{n2}\mathcal{L}| & \dots & |1-\epsilon|+|W_{nn}\mathcal{L}|\\
\end{array}
\right].
\end{equation}
The spectral radius of the matrix $A$ is then
$$\rho(A)=\rho\big(|1-\epsilon|I_n+\mathcal{L}|W|\big)=|1-\epsilon|+\mathcal{L}\cdot\rho(|W|)$$
where $I_n$ is the $n\times n$ identity matrix. From theorem \ref{stability} it follows that $(\mathcal{F},\mathbb{R}^n)$ has a globally attracting fixed point if $|1-\epsilon|+\mathcal{L}\rho\big(|W|\big)<1$ implying the result.
\end{proof}

To the best of our knowledge theorem \ref{CGtheorem} is a new stability criteria for this class of neural networks. However, the major goal of this paper is to extend such results to the case where the network's interactions include time delays. To do so requires a better understanding of the \emph{graph structure} of dynamical networks.

\subsection{Graph Structure of Dynamical Networks}
To each dynamical network $(\mathcal{F},X)$ there is an associated unweighted directed graph called its \textit{graph of interactions}. An \textit{unweighted directed graph} $G$ is an ordered pair $G=(V,E)$ where the sets $V$ and $E$ are the \textit{vertex set} and \textit{edge set} of $G$ respectively. If the vertex set $V=\{v_1,\dots,v_n\}$ then we denote the directed edge from $v_i$ to $v_j$ by $e_{ij}$.

\begin{definition}
The graph $\Gamma_{\mathcal{F}}=(V,E)$ with vertex set $V=\{v_1,\dots,v_n\}$ and edge set $E=\{e_{ij}: i\in \mathcal{I}_j, \ j\in\mathcal{I}\}$ is called the \textit{graph of interactions} of the dynamical network $(\mathcal{F},X)$.
\end{definition}

Each vertex $v_i\in V$ of $\Gamma_{\mathcal{F}}=(V,E)$ corresponds to the $i$th element of the dynamical network $(\mathcal{F},X)$. Also, there is an edge $e_{ij}\in E$ or a \emph{directed interaction} from the $i$th to the $j$th element of the network if and only if the $j$th component of the interaction $F(\textbf{x})$ depends on the $i$th coordinate of $\textbf{x}$.

\begin{figure}
  \begin{center}
    \begin{overpic}[scale=.5]{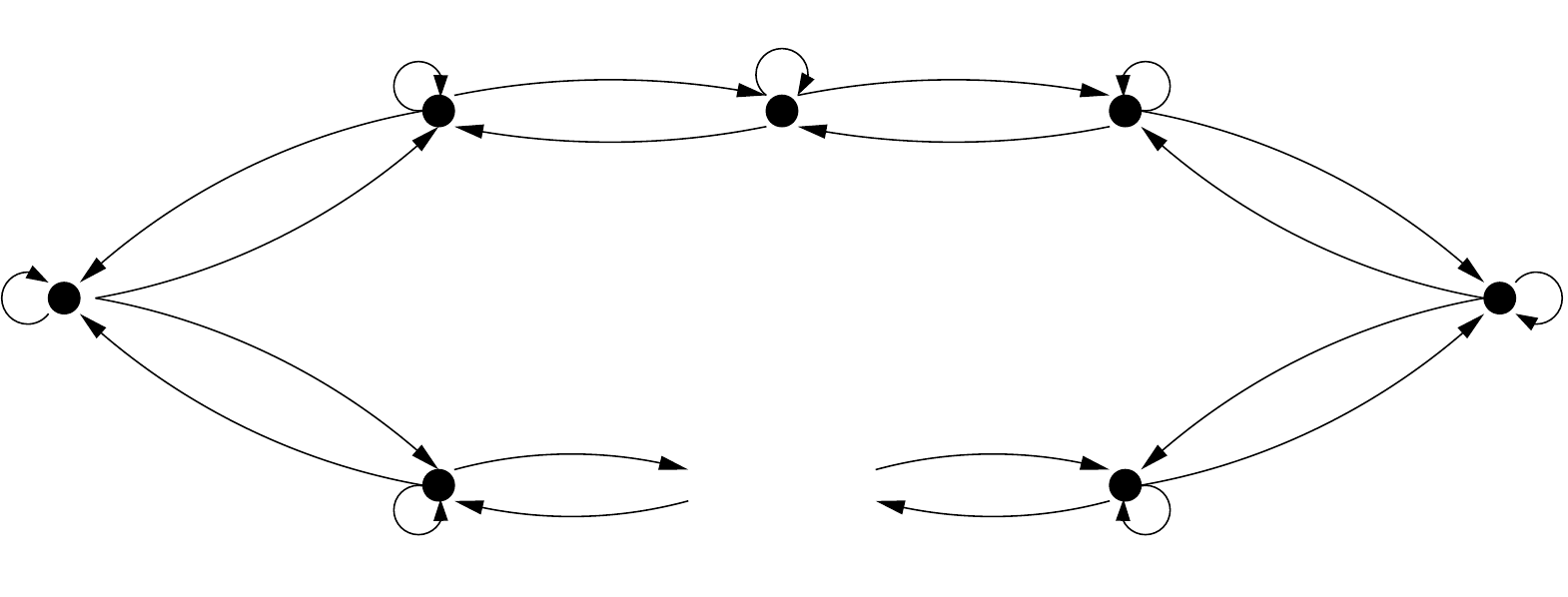}
    \put(49,-5){$\Gamma_\mathcal{F}$}
    \put(26,37){$v_1$}
    \put(-7,17){$v_2$}
    \put(26,-1.5){$v_3$}
    \put(66,-1.5){$v_{n-3}$}
    \put(102,17){$v_{n-2}$}
    \put(68,37){$v_{n-1}$}
    \put(48,37){$v_{n}$}
    \put(48,6.5){$\dots$}
    \end{overpic}
  \end{center}
  \caption{The graph of interactions of the Cohen-Grossberg type network in example 1.}\label{fig1}
\end{figure}

\begin{example}
Suppose the map $\mathcal{F}:X\rightarrow X$ is given by
$$\mathcal{F}_j(\mathbf{x})=(1-\epsilon)x_j+a\big[\tanh(b x_{j-1})+\tanh(b x_{j+1})\big]+c_j$$
for $1\leq j\leq n$ where the indices are taken mod $n$, $X_i=\mathbb{R}$, $a,b,c_j\in\mathbb{R}$, and $\epsilon\neq 1$. Notice that this has the form of a Cohen-Grossberg network given by (\ref{CG}) where $\phi_i(x_i)=\tanh(b x_i)$ has Lipschitz constant $|b|$. The graph of interaction of the system $(\mathcal{F},\mathcal{R}^n)$ is shown in figure 1 for $\epsilon\neq 1$.

Moreover, it follows that the matrix $|W|$ is given by
$$|W|=\left[
\begin{array}{ccccccc}
0 & |a| & &  &|a|\\
|a| & 0 & |a| & & \\
 & \ddots & \ddots & \ddots & \\
 & & |a| & 0 & |a|\\
|a| &  &  & |a| & 0\\
\end{array}
\right]$$
for this system. Since $|W|$ has nonnegative entries and constant row sums $2|a|$ then $\rho(|W|)=2|a|$. By theorem \ref{CGtheorem}, if $|1-\epsilon|+2|ab|<1$ then the dynamical network $(\mathcal{F},\mathbb{R}^n)$ has a globally attracting fixed point.
\end{example}

\subsection{Dynamical Networks Without Local Dynamics}
By definition \ref{dynamicalnetwork} the function $\mathcal{F}=F\circ \varphi$ is the composition of the network's local dynamics $\varphi$ and interaction $F$. However, if the system has \emph{no local dynamics}, i.e. $\varphi=id$ is the identity map, then the dynamical network $(\mathcal{F},X)=(F,X)$ is simply the interaction $\mathcal{F}=F$ on $X$.

Conversely, notice that $\mathcal{F}=F\circ \varphi$ can itself be considered to be an interaction. Writing $\mathcal{F}=(F\circ \varphi)\circ id$ the dynamical network $(\mathcal{F},X)$ is simply the interaction $\mathcal{F}=F\circ\varphi$ with no local dynamics. Since this is formally the same dynamical network we call it the dynamical network $(\mathcal{F},X)$ \emph{considered as a network without local dynamics}.

\begin{proposition}\label{prop-1}
Suppose $(\mathcal{F},X)$ is the dynamical network with no local dynamics. If the constants $\Lambda_{ij}$ satisfy (\ref{eq2.3}) for $\mathcal{F}=F$ then $\Lambda^T$ is a stability matrix of $(\mathcal{F},X)$.
\end{proposition}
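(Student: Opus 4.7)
The plan is essentially a one-line verification. The definition of a stability matrix is $\Lambda^T\cdot\mathrm{diag}[L_1,\dots,L_n]$, where each $L_i$ is the Lipschitz constant of the local map $\varphi_i$ as given by (\ref{eq0.1}). So the only thing I need to do is compute what the $L_i$ reduce to when $\varphi=id$, and then confirm that the product matrix coincides with $\Lambda^T$.

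First I would observe that if $\varphi=id$ then each component $\varphi_i:X_i\to X_i$ is the identity map on $X_i$, so
$$\frac{d(\varphi_i(x_i),\varphi_i(y_i))}{d(x_i,y_i)}=\frac{d(x_i,y_i)}{d(x_i,y_i)}=1$$
for every pair $x_i\neq y_i$ in $X_i$, and hence $L_i=1$ for every $i\in\mathcal{I}$ (assuming each $X_i$ has at least two points; otherwise the factor plays no role in any coordinate and the statement is vacuous). Therefore $\mathrm{diag}[L_1,\dots,L_n]=I_n$, the $n\times n$ identity matrix.

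Next I would invoke the hypothesis: since $\mathcal{F}=F$ (the local dynamics being trivial), the constants $\Lambda_{ij}$ are assumed to satisfy the Lipschitz bound (\ref{eq2.3}) for this $F$, and $\Lambda_{ij}=0$ when $i\notin\mathcal{I}_j$, so $\Lambda$ qualifies as a matrix of Lipschitz constants in the sense required by the definition of a stability matrix. Thus
$$\Lambda^T\cdot\mathrm{diag}[L_1,\dots,L_n]=\Lambda^T\cdot I_n=\Lambda^T$$
is by definition a stability matrix of $(\mathcal{F},X)$, which is what had to be shown.

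There is essentially no obstacle here: the result is a direct unpacking of definitions, and the only remark worth making in the written proof is that $L_i=1$ for the identity map, so the diagonal factor drops out. No nontrivial estimate or spectral argument is required.
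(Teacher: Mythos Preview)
Your proposal is correct and follows essentially the same approach as the paper's own proof: both observe that $\varphi_i=id$ forces $L_i=1$, so that $\mathrm{diag}[L_1,\dots,L_n]=I_n$ and the stability matrix $\Lambda^T\cdot\mathrm{diag}[L_1,\dots,L_n]$ reduces to $\Lambda^T$. The paper's argument is slightly terser, but there is no substantive difference.
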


\begin{proof}
If $(\mathcal{F},X)$ has no local dynamics then $\varphi_i=id$ implying $L_i=1$ satisfy equation (\ref{eq0.1}). Hence, the matrix $\Lambda^T\cdot diag[L_1,\dots, L_n]=\Lambda^T$ is a stability matrix of $(\mathcal{F},X)$ under the assumption that the constants $\Lambda_{ij}$ satisfy (\ref{eq2.3}) for $\mathcal{F}=F$.
\end{proof}

\begin{remark}
If $\mathcal{F}\in C^1_{\infty}(X)$ then proposition \ref{prop-1} implies that the matrix $\Lambda^T$ with
$$\Lambda_{ij}=\max_{\textbf{x}\in X}|D\mathcal{F}_{ji}(\textbf{x})|$$
is a stability matrix of $(\mathcal{F},X)$ if the network has no local dynamics.
\end{remark}

\begin{proposition}\label{prop-2}
If $A$ is a stability matrix of $(\mathcal{F},X)$ then $A$ is a stability matrix of $(\mathcal{F},X)$ considered as a network without local dynamics.
\end{proposition}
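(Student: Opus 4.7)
The plan is to take the given stability matrix $A = \Lambda^T \cdot \mathrm{diag}[L_1,\dots,L_n]$ of $(\mathcal{F},X)$ and reinterpret it as a stability matrix for the interaction $\mathcal{F} = F\circ\varphi$ on its own (with trivial local dynamics $\mathrm{id}$). By Proposition \ref{prop-1}, it suffices to produce constants $\Lambda'_{ij}$ satisfying the Lipschitz condition (\ref{eq2.3}) with $F$ replaced by $\mathcal{F}$, such that $(\Lambda')^T$ equals $A$.

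The natural candidate is $\Lambda'_{ij} := \Lambda_{ij} L_i$. Observe that $(\Lambda')^T$ has $(j,i)$ entry $\Lambda_{ij}L_i$, which is exactly the $(j,i)$ entry of $\Lambda^T \cdot \mathrm{diag}[L_1,\dots,L_n]=A$, so the matrix identity $(\Lambda')^T = A$ is automatic once the $\Lambda'_{ij}$ are shown to be valid Lipschitz constants for $\mathcal{F}$.

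The verification of (\ref{eq2.3}) for $\mathcal{F}$ is a short two-step estimate: for $\mathbf{x},\mathbf{y}\in X$, use the Lipschitz bound (\ref{eq2.3}) for $F$ applied to $\varphi(\mathbf{x})$ and $\varphi(\mathbf{y})$, then apply (\ref{eq0.1}) coordinatewise to bound $d(\varphi_i(x_i),\varphi_i(y_i))$ by $L_i\,d(x_i,y_i)$. Chaining these gives
\[
d\bigl(\mathcal{F}_j(\mathbf{x}|_{\mathcal{I}_j}),\mathcal{F}_j(\mathbf{y}|_{\mathcal{I}_j})\bigr) \le \sum_{i\in\mathcal{I}_j}\Lambda_{ij}L_i\,d(x_i,y_i) = \sum_{i\in\mathcal{I}_j}\Lambda'_{ij}\,d(x_i,y_i),
\]
which is precisely (\ref{eq2.3}) for $\mathcal{F}$ with the constants $\Lambda'_{ij}$.

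There is no real obstacle here; the whole content of the proposition is the bookkeeping observation that absorbing the factors $L_i$ from the local-dynamics side into the Lipschitz constants on the interaction side preserves the product matrix, and that the absorbed constants still satisfy the Lipschitz condition for the composed map $F\circ\varphi$. Applying Proposition \ref{prop-1} to the constants $\Lambda'_{ij}$ then yields that $(\Lambda')^T = A$ is a stability matrix of $(\mathcal{F},X)$ considered without local dynamics, completing the argument.
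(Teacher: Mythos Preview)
Your proposal is correct and follows essentially the same approach as the paper: both absorb the local Lipschitz constants $L_i$ into the interaction Lipschitz constants to get $\Lambda'_{ij}=\Lambda_{ij}L_i$, verify via the chain estimate $d(F_j(\varphi(\mathbf{x})|_{\mathcal{I}_j}),F_j(\varphi(\mathbf{y})|_{\mathcal{I}_j}))\le\sum_{i\in\mathcal{I}_j}\Lambda_{ij}L_i\,d(x_i,y_i)$ that these constants satisfy (\ref{eq2.3}) for $\mathcal{F}$, and then invoke Proposition~\ref{prop-1}.
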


\begin{proof}
Suppose $A$ is a stability matrix of $(\mathcal{F},X)$ where $\mathcal{F}=F\circ \varphi$. Then $A=\Lambda^T\cdot diag[L_1,...,L_n]$ implying $S_{ij}=\Lambda_{ji}L_j$ where $L_i$ and $\Lambda_{ij}$ satisfy (\ref{eq0.1}) and (\ref{eq2.3}) respectively. Therefore,
\begin{align*}
d\big(\mathcal{F}_j(\textbf{x}),\mathcal{F}_j(\textbf{y})\big)= d\big(F_j(\varphi(\textbf{x})|_{\mathcal{I}_j}),F_j(\varphi(\textbf{x})|_{\mathcal{I}_j})\big)&\leq\\
\sum_{i\in\mathcal{I}_j}\Lambda_{ij}d\big(\varphi_i(x_i),\varphi_i(y_i)\big)\leq \sum_{i\in\mathcal{I}_j}\Lambda_{ij}L_i d(x_i,y_i)&
\end{align*}
for all $j\in\mathcal{I}$ and $\textbf{x},\textbf{y}\in X$. Since $A^T_{ij}=\Lambda_{ij}L_i$ then proposition \ref{prop-1} implies that  $A$ is a stability matrix of $(\mathcal{F},X)$ considered as a network without local dynamics.
\end{proof}

The reason we consider dynamical networks without local dynamics is that this notion is useful in the study of the global stability of time-delayed networks introduced in the following section.

\section{Global Stability of Time-Delayed Dynamical Networks}
Our first task in this section is to extend the formalism of the previous section to dynamical networks with time-delays. This requires that we allow the state $\mathcal{F}^{k+1}(\textbf{x})$ of the dynamical network $(\mathcal{F},X)$ to depend not only on $\mathcal{F}^k(\textbf{x})$ but on some subset of the previous $T$ states $\{\mathcal{F}^{k}(\textbf{x}),\mathcal{F}^{k-1}(\textbf{x}),\dots,\mathcal{F}^{k-(T-1)}(\textbf{x})\}$ of the system.

\subsection{Time-Delayed Dynamical Networks}
Given the fixed integer $T\geq 1$ let $\mathcal{T}=\{0,-1,\dots,-T+1\}$ and let $X^\tau=X$ for all $\tau\in\mathcal{T}$. We define the product space
$$X^T=\bigoplus_{\tau\in\mathcal{T}}X^{\tau}.$$
For the local systems $(\varphi,X)$ we define the function $\varphi:X^T\rightarrow X^T$ as follows. For $(\textbf{x}^0,\dots,\textbf{x}^{-T+1})\in X^T$ let
$$\varphi(\textbf{x}^0,\dots,\textbf{x}^{-T+1})=\big(\varphi(\textbf{x}^0),\dots,\varphi(\textbf{x}^{-T+1})\big)$$
where $\varphi(\textbf{x}^0,\dots,\textbf{x}^{-T+1})^\tau_i=\varphi_i(x_i^\tau)$ for $(i,\tau)\in\mathcal{I}\times\mathcal{T}$. Adopting the terminology of the previous section we say $(\varphi,X^T)$ are local systems.

\begin{definition}\label{DDN}
A map $H:X^T\rightarrow X$ is called a \textit{time-delayed interaction} if for all $j\in\mathcal{I}$ there is a set $\mathcal{I}^j\subseteq\mathcal{I}\times\mathcal{T}$ and a continuous function
$$H_j:\bigoplus_{(i,\tau)\in \mathcal{I}^j} X_i^\tau\rightarrow X_j.$$
The map $H$ is defined as follows:
$$H(\mathbf{x})_j=H_j(\textbf{x}|_{\mathcal{I}^j}), \ \ j\in \mathcal{I}, \ \ \text{and} \ \ \textbf{x}\in X^T.$$
The superposition $\mathcal{H}=H\circ \varphi$ generates the \textit{time-delayed dynamical network} $(\mathcal{H},X^T)$. The \textit{orbit} of $(\mathbf{x}^0,\mathbf{x}^{-1},\dots,\mathbf{x}^{-T+1})\in X^T$ under $\mathcal{H}$ is the sequence $\{\mathbf{x}^k\}_{k>-T}$ where
$$\mathbf{x}^{k+1}=\mathcal{H}(\mathbf{x}^{k},\mathbf{x}^{k-1},\dots,\mathbf{x}^{k-(T-1)}).$$
\end{definition}

As before, we are concerned with finding sufficient conditions under which a time-delayed dynamical network has a globally attracting fixed point.

\begin{definition}
A \textit{fixed point} of a delayed dynamical network $(\mathcal{H},X^T)$ is an $\tilde{\textbf{x}}\in X$ such that $\tilde{\mathbf{x}}=\mathcal{H}(\tilde{\mathbf{x}},\dots,\tilde{\mathbf{x}})$. The fixed point $\tilde{\textbf{x}}\in X$ is a \emph{global attractor} of $(\mathcal{H},X^T)$ if for any initial condition $(\mathbf{x}^0,\mathbf{x}^{-1},\dots,\mathbf{x}^{-T+1})\in X^T$ the limit
$$\lim_{k\rightarrow\infty}d_{max}(\mathbf{x}^k,\tilde{\mathbf{x}})=0.$$
\end{definition}

To connect the study of delayed dynamical networks with the dynamical networks introduced in section 2 we construct the following. For the delayed dynamical system $(\mathcal{H},X^T)$ we define the index set
$$\mathcal{I}_\mathcal{H}=\{i,j;\ell,m : \ 1\leq j \leq n, \ (i,m)\in\mathcal{I}^j, \ 1\leq \ell\leq m\}.$$

Let $\tilde{\mathcal{H}}_j:X^T\rightarrow X_j$ be the function $\mathcal{H}_j(\mathbf{x}^{k},\dots,\mathbf{x}^{k-(T-1)})$ in which each \emph{time-delayed variable} $x_i^{k-m}$ is replaced by $x_{i,j;m,m}$ for $m\geq 1$ and each $x_i^k$ by $x_i$. For each $\delta\in\mathcal{I}_\mathcal{H}$ let $\mathcal{H}_{\delta}=\mathcal{H}_{i,j;\ell,m}$ be the function
\begin{equation}\label{eq5}
\mathcal{H}_{i,j;\ell,m}(x_{i,j;\ell-1,m})=x_{i,j;\ell-1,m}
\end{equation}
where $x_{i,j;0,m}=x_i$. Define the product space
\begin{equation}\label{eq3}
X_\mathcal{H}=\Big\{\textbf{x}\in X\oplus\Big(\bigoplus_{\delta\in\mathcal{I}_\mathcal{H}}X_\delta\Big):x_{i,j;\ell,m}=x_{s,t;u,v} \ \text{if} \ i=s, \ \ell=u\Big\}
\end{equation}
where $X_{i,j;\ell,m}$ is a copy of the space $X_i$. Let $\mathcal{N}_\mathcal{H}:X_\mathcal{H}\rightarrow X_\mathcal{H}$ be the function
$$\mathcal{N}_\mathcal{H}=\Big(\bigoplus_{j\in\mathcal{I}}\tilde{\mathcal{H}}_j\Big) \oplus\Big(\bigoplus_{\delta\in\mathcal{I}_\mathcal{H}}\mathcal{H}_{\delta}\Big).$$

The function $\mathcal{N}_\mathcal{H}:X_\mathcal{H}\rightarrow X_\mathcal{H}$ generates the dynamical network $(\mathcal{N}_\mathcal{H},X_\mathcal{H})$. However, because of the time-delays involved and the potential noninvertability of the local systems $(\varphi,X^T)$, the dynamical network $(\mathcal{N}_\mathcal{H},X_{\mathcal{H}})$ cannot typically be represented as a network with local dynamics. We therefore consider $(\mathcal{N}_\mathcal{H},X_{\mathcal{H}})$ as a network with no local dynamics.

\begin{theorem}\label{next}
The time-delayed dynamical network $(\mathcal{H},X^T)$ has a globally attracting fixed point if and only if the same is true of $(\mathcal{N}_{\mathcal{H}},X_{\mathcal{H}})$.
\end{theorem}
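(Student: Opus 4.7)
The plan is to interpret $\mathcal{N}_{\mathcal{H}}$ as the standard state-space unfolding of the time-delayed system: the auxiliary coordinates $x_{i,j;\ell,m}$ serve as a shift register that stores $x_i$ delayed by $\ell$ steps, and the consistency constraint in the definition of $X_{\mathcal{H}}$ encodes the fact that the stored value depends only on $(i,\ell)$, not on which $\tilde{\mathcal{H}}_j$ consumes it. Concretely, I would first verify by induction on $k$ that if $\{\mathbf{y}^{(k)}\}$ is the orbit of $(\mathcal{N}_{\mathcal{H}},X_{\mathcal{H}})$ starting from any $\mathbf{y}^{(0)}\in X_{\mathcal{H}}$, then for $k\geq\ell$ the auxiliary component satisfies $y^{(k)}_{i,j;\ell,m}=y^{(k-\ell)}_i$; this uses only the shift relation $\mathcal{H}_{i,j;\ell,m}(x_{i,j;\ell-1,m})=x_{i,j;\ell-1,m}$ together with $x_{i,j;0,m}=x_i$. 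As a consequence, once $k\geq T-1$, the primary update produced by $\tilde{\mathcal{H}}_j$ agrees exactly with the delayed update $\mathcal{H}_j(\mathbf{y}^{(k)},\mathbf{y}^{(k-1)},\ldots,\mathbf{y}^{(k-T+1)})$ restricted to its primary coordinates. The fixed-point correspondence is then immediate: $\tilde{\mathbf{x}}\in X$ is a fixed point of $\mathcal{H}$ if and only if the point $\tilde{\mathbf{y}}\in X_{\mathcal{H}}$ with $\tilde{y}_j=\tilde{x}_j$ and $\tilde{y}_{i,j;\ell,m}=\tilde{x}_i$ is a fixed point of $\mathcal{N}_{\mathcal{H}}$, since the shift relation forces $\tilde{y}_{i,j;\ell,m}=\tilde{y}_i$ at any fixed point.

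For the forward implication, assume $(\mathcal{H},X^T)$ has a globally attracting fixed point $\tilde{\mathbf{x}}$. Given any $\mathbf{y}^{(0)}\in X_{\mathcal{H}}$, form the sequence $\mathbf{x}^{(k)}=\mathbf{y}^{(k)}|_X$ of primary coordinates. The synchronization observation above shows that for $k\geq T-1$ the sequence $\{\mathbf{x}^{(k)}\}$ is the orbit of $(\mathcal{H},X^T)$ starting from the history $(\mathbf{x}^{(T-1)},\mathbf{x}^{(T-2)},\ldots,\mathbf{x}^{(0)})\in X^T$, so $\mathbf{x}^{(k)}\to\tilde{\mathbf{x}}$ in $d_{max}$ by hypothesis. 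The identity $y^{(k)}_{i,j;\ell,m}=x^{(k-\ell)}_i$ then forces every auxiliary coordinate to converge to $\tilde{x}_i$ as well, and since $d_{max}$ on $X_{\mathcal{H}}$ is the maximum over finitely many copies of the metric on $X$, we conclude $\mathbf{y}^{(k)}\to\tilde{\mathbf{y}}$.

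For the reverse implication, assume $(\mathcal{N}_{\mathcal{H}},X_{\mathcal{H}})$ has a globally attracting fixed point $\tilde{\mathbf{y}}$, which by the remark above corresponds to $\tilde{\mathbf{x}}\in X$ with $\tilde{\mathbf{x}}=\mathcal{H}(\tilde{\mathbf{x}},\ldots,\tilde{\mathbf{x}})$. Given any initial history $(\mathbf{x}^{(0)},\mathbf{x}^{(-1)},\ldots,\mathbf{x}^{(-T+1)})\in X^T$, build $\mathbf{y}^{(0)}\in X_{\mathcal{H}}$ by setting $y^{(0)}_i=x^{(0)}_i$ and $y^{(0)}_{i,j;\ell,m}=x^{(-\ell)}_i$; this respects the consistency constraint in (\ref{eq3}). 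Another induction shows that the primary coordinates of $\mathcal{N}_{\mathcal{H}}^k(\mathbf{y}^{(0)})$ coincide with the orbit $\{\mathbf{x}^{(k)}\}$ of $(\mathcal{H},X^T)$ for all $k\geq 0$. Since $\mathbf{y}^{(k)}\to\tilde{\mathbf{y}}$, its projection onto $X$ yields $\mathbf{x}^{(k)}\to\tilde{\mathbf{x}}$, proving global attraction for the time-delayed system.

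The main technical obstacle is the bookkeeping involved in the synchronization step of the forward direction: for arbitrary initial data in $X_{\mathcal{H}}$ the auxiliary coordinates need not be consistent with any genuine past history of the primary coordinates, so one must verify that after the finite transient of length $T-1$ the expanded orbit exactly reproduces a $\mathcal{H}$-orbit. Everything else is a straightforward unpacking of the definitions of $\tilde{\mathcal{H}}_j$, $\mathcal{H}_\delta$, and the product metric on $X_{\mathcal{H}}$.
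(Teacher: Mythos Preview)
Your approach is correct and morally the same as the paper's, but the paper packages it more efficiently. Rather than tracking a transient synchronization, the paper introduces the shift map $\underline{\mathcal{H}}:X^T\to X^T$ sending $(\mathbf{x}^k,\ldots,\mathbf{x}^{k-T+1})$ to $(\mathbf{x}^{k+1},\ldots,\mathbf{x}^{k-T+2})$, and shows that $\pi:X^T\to X_{\mathcal{H}}$ defined by $\pi(\mathbf{y})_i=y_i^0$, $\pi(\mathbf{y})_{i,j;\ell,m}=y_i^{-\ell}$ is a bijection conjugating $(\underline{\mathcal{H}},X^T)$ with $(\mathcal{N}_{\mathcal{H}},X_{\mathcal{H}})$; the equivalence of global stability is then immediate. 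Your reverse-direction construction of $\mathbf{y}^{(0)}$ from a history is exactly this map $\pi$.

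The ``technical obstacle'' you flag in the forward direction is not actually there. The consistency constraint built into $X_{\mathcal{H}}$ (equation~(\ref{eq3})) forces $y^{(0)}_{i,j;\ell,m}$ to depend only on $(i,\ell)$, so \emph{every} point of $X_{\mathcal{H}}$ is already $\pi$ of some history in $X^T$---no waiting $T-1$ steps for the shift register to fill up is needed. Your synchronization argument still works, but it is doing more than necessary; once you observe that $\pi$ is onto $X_{\mathcal{H}}$, the forward direction follows as directly as the reverse one.
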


\begin{proof}
To compare the dynamical systems $(\mathcal{H},X^T)$ and $(\mathcal{N}_{\mathcal{H}},X_{\mathcal{H}})$ we define the mapping $\underline{\mathcal{H}}:X^T\rightarrow X^T$ by
$$(\mathbf{x}^{k+1},\mathbf{x}^{k},\dots, \mathbf{x}^{k-T+2})=\underline{\mathcal{H}}(\mathbf{x}^{k},\mathbf{x}^{k-1},\dots,\mathbf{x}^{k-(T-1)})$$
for all $k\geq 0$. The claim is that the systems $(\underline{\mathcal{H}},X^T)$ and $(\mathcal{N}_{\mathcal{H}},X_{\mathcal{H}})$ are conjugate.

To verify this we let the map $\pi:X^T\rightarrow X_{\mathcal{H}}$ be given by
$$\pi(\textbf{y})_\delta=\begin{cases}
y_i^0, &\delta=i\in\mathcal{I}\\
y^{-\ell}_i, &\delta=i,j;\ell,m\in\mathcal{I}_{\mathcal{H}}
\end{cases}.$$
We note that it follows from (\ref{eq3}) that $\pi$ is a bijection.

Let $\textbf{x}=(\textbf{x}^0,\dots,\textbf{x}^{-T+1})\in X^T$. As $\pi\circ\underline{\mathcal{H}}(\textbf{x})=\pi\big(\mathcal{H}(\textbf{x}),\textbf{x}^0,\dots, \textbf{x}^{-T+2}\big)$ then
$$\big(\pi\circ\underline{\mathcal{H}}(\textbf{x})\big)_\delta=\begin{cases}
\mathcal{H}_i(\textbf{x}), &\delta=i\in\mathcal{I}\\
x^{-\ell+1}_i, &\delta=i,j;\ell,m\in\mathcal{I}_{\mathcal{H}}
\end{cases}.$$
Also, we have
$$\big(\mathcal{N}_\mathcal{H}\circ\pi(\textbf{x})\big)_\delta=\begin{cases}
\tilde{\mathcal{H}}_i\big(\pi(\textbf{x})\big), &\delta=i\in\mathcal{I}\\
\mathcal{H}_{i,j;\ell,m}\big(\pi(\textbf{x})_{i,j;\ell-1,m}\big), &\delta=i,j;\ell,m\in\mathcal{I}_{\mathcal{H}}
\end{cases}.$$

For $i\in\mathcal{I}$, $\tilde{\mathcal{H}}_i:X^T\rightarrow X_i$ is the function $\mathcal{H}_i$ in which each variable $x_i^{-m}$ is replaced by $x_{i,j;m,m}$ and $x_i^0$ by $x_i$. As this is the same as mapping $\textbf{x}$ to $\pi(\textbf{x})$ then $\mathcal{H}_i(\textbf{x})=\tilde{\mathcal{H}}_i(\pi\big(\textbf{x})\big)$. Moreover, as $\mathcal{H}_{i,j;\ell,m}\big(\pi(\textbf{x})_{i,j;\ell-1,m}\big)=\pi(\textbf{x})_{i,j;\ell-1,m}=x_i^{-\ell+1}$ then
$$\pi\circ\underline{\mathcal{H}}(\textbf{x})=\mathcal{N}_{\mathcal{H}}\circ\pi(\textbf{x}).$$
As $\pi$ is a bijection then $(\underline{\mathcal{H}},X^T)$ and $(\mathcal{N}_{\mathcal{H}},X_{\mathcal{H}})$ are conjugate verifying the claim. Therefore, $(\underline{\mathcal{H}},X^T)$ has a globally attracting fixed point if and only if $(\mathcal{N}_{\mathcal{H}},X_{\mathcal{H}})$ has a globally attracting fixed point.

Since $\mathcal{H}(\textbf{x})$ is simply the restriction $\underline{\mathcal{H}}(\textbf{x})|_{X^0}$ then $(\underline{\mathcal{H}},X^T)$ is globally stable if and only if the same is true of $(\mathcal{H},X^T)$ implying the result.
\end{proof}

By combining the results of theorems \ref{stability} and \ref{next} it is possible to investigate the dynamic stability of $(\mathcal{H},X^T)$ via its associated dynamical network $(\mathcal{N}_{\mathcal{H}},X_{\mathcal{H}})$.

\begin{corollary}\label{corollary1}
Suppose $A$ is a stability matrix of $(\mathcal{N}_{\mathcal{H}},X_{\mathcal{H}})$. If $\rho(A)<1$ then $(\mathcal{H},X^T)$ has a globally attracting fixed point.
\end{corollary}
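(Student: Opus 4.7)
The plan is to obtain the corollary as an essentially immediate consequence of the two main results already established, namely Theorem \ref{stability} and Theorem \ref{next}. Since the hypothesis provides a stability matrix $A$ of the (non-delayed) dynamical network $(\mathcal{N}_\mathcal{H},X_\mathcal{H})$ with $\rho(A)<1$, the work consists of chaining these two theorems in the right order.

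First I would invoke Theorem \ref{stability} directly applied to the dynamical network $(\mathcal{N}_\mathcal{H},X_\mathcal{H})$. This system was constructed in the paragraphs preceding Theorem \ref{next} and, as emphasized there, is viewed as a network without local dynamics; this is compatible with Theorem \ref{stability}, since a network without local dynamics corresponds to the case $\varphi = \mathrm{id}$ (with $L_i = 1$) and is covered by the general formalism of Section 2. The hypothesis $\rho(A)<1$ on the stability matrix $A$ of $(\mathcal{N}_\mathcal{H},X_\mathcal{H})$ then yields that $(\mathcal{N}_\mathcal{H},X_\mathcal{H})$ possesses a globally attracting fixed point.

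Next I would apply Theorem \ref{next}, which asserts the equivalence of global stability between $(\mathcal{H},X^T)$ and $(\mathcal{N}_\mathcal{H},X_\mathcal{H})$. Since the latter has a globally attracting fixed point by the previous step, the time-delayed dynamical network $(\mathcal{H},X^T)$ does as well, which is exactly the desired conclusion.

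There is no real obstacle here: the corollary is essentially a bookkeeping statement that packages Theorem \ref{stability} and Theorem \ref{next} into a usable criterion for time-delayed networks. If there is any subtlety to flag, it is simply to remind the reader that $(\mathcal{N}_\mathcal{H},X_\mathcal{H})$ is treated as a network without local dynamics, so that Theorem \ref{stability} applies to it via the framework of Propositions \ref{prop-1} and \ref{prop-2}. Beyond that, the proof is a two-line composition.
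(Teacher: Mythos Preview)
Your proposal is correct and matches the paper's approach exactly: the corollary is stated as an immediate consequence of combining Theorem~\ref{stability} (applied to $(\mathcal{N}_\mathcal{H},X_\mathcal{H})$) with Theorem~\ref{next}. The paper gives no further argument beyond noting this combination, so your two-step chaining is precisely what is intended.
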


In light of corollary \ref{corollary1} we say $A$ is a stability matrix of $(\mathcal{H},X^T)$ if it is a stability matrix of $(\mathcal{N}_{\mathcal{H}},X_{\mathcal{H}})$. Moreover, if $\mathcal{N}_{\mathcal{H}}\in C_\infty^1(X_{\mathcal{H}})$ we will write $\rho(\mathcal{H})=\rho(\mathcal{N}_{\mathcal{H}})$.

\begin{example}
Consider the delayed dynamical network $(\mathcal{H},X^T)$ given by
\begin{equation}\label{eq2.1}
\mathcal{H}(\textbf{x}^{k},\textbf{x}^{k-1},\textbf{x}^{k-2},\textbf{x}^{k-3})=\left[\begin{array}{l}
(1-\epsilon)x^{k-1}_1+2a\tanh(b x^{k-3}_{2})+c_1\\
(1-\epsilon)x^{k-1}_2+2a\tanh(b x^{k-3}_{1})+c_2
\end{array}
\right]
\end{equation}
with local systems $\varphi_i(x_i)=(1-\epsilon)x_i+c_i$ for $i=1,2$ and interaction
\begin{equation}\label{eq2.2}
H(\textbf{x}^{k},\textbf{x}^{k-1},\textbf{x}^{k-2},\textbf{x}^{k-3})=\left[\begin{array}{l}
x^{k-1}_1+2a\tanh(b \frac{x^{k-3}_{2}-c_2}{1-\epsilon})\\
x^{k-1}_2+2a\tanh(b \frac{x^{k-3}_{1}-c_1}{1-\epsilon})
\end{array}
\right]
\end{equation}
where $a,b,c_i\in\mathbb{R}$, $X=\mathbb{R}^2$, and $\epsilon\neq1$. Note that this system is a delayed version of the Cohen-Grossberg network described in example 1 for $n=2$. Such time-delayed systems have been of recent interest. See for instance \cite{LTao2011,SChena2009,LWang2005}.

Since $\mathcal{I}^1=\{(1,-1),(2,-3)\}$, $\mathcal{I}^2=\{(1,-3),(2,-1)\}$ and $T=4$ then
$$\mathcal{I}_{\mathcal{H}}=\{11;11, \ 12;13, \ 12;23, \ 12;33, \ 21;13, \ 21;23, \ 21;33, \ 22;11\}$$
and the components of the function $\tilde{\mathcal{H}}$ are given by
$$\tilde{\mathcal{H}}=\left[\begin{array}{l}
(1-\epsilon)x_{11;11}+2a\tanh(b x_{21;33})+c_1\\
(1-\epsilon)x_{22;11}+2a\tanh(b x_{12;33})+c_2
\end{array}
\right].$$
Following our construction the function $\mathcal{N}_{\mathcal{H}}$ is then defined as
$$\mathcal{N}_{\mathcal{H}}(\textbf{x})=\left[\begin{array}{l}
(\mathcal{N}_{\mathcal{H}})_1(x_{11;11},x_{21;33})\\
(\mathcal{N}_{\mathcal{H}})_2(x_{12;33},x_{22;11})\\
(\mathcal{N}_{\mathcal{H}})_{11;11}(x_{11;01})\\
(\mathcal{N}_{\mathcal{H}})_{12;13}(x_{12;03})\\
(\mathcal{N}_{\mathcal{H}})_{12;23}(x_{12;13})\\
(\mathcal{N}_{\mathcal{H}})_{12;33}(x_{12;23})\\
(\mathcal{N}_{\mathcal{H}})_{21;13}(x_{21;03})\\
(\mathcal{N}_{\mathcal{H}})_{21;23}(x_{21;13})\\
(\mathcal{N}_{\mathcal{H}})_{21;33}(x_{21;23})\\
(\mathcal{N}_{\mathcal{H}})_{22;11}(x_{22;01})
\end{array}
\right]=
\left[\begin{array}{l}
(1-\epsilon)x_{11;11}+2a\tanh(b x_{21;33})+c_1\\
(1-\epsilon)x_{22;11}+2a\tanh(b x_{12;33})+c_2\\
x_1\\
x_1\\
x_{12;13}\\
x_{12;23}\\
x_2\\
x_{21;13}\\
x_{21;23}\\
x_2
\end{array}
\right].$$
Here, $\textbf{x}\in X_{\mathcal{H}}=\mathbb{R}^{10}$ where the component $x_{11;11}=x_{12;13}$ and the component $x_{21;13}=x_{22;11}$. As $\mathcal{N}_{\mathcal{H}}\in C_\infty^1(\mathbb{R}^{10})$ then proposition \ref{prop-1} implies that the constants $$\Lambda_{ij}=\max_{\textbf{x}\in \mathbb{R}^{10}}|(D\mathcal{N}_{\mathcal{H}})_{ji}(\textbf{x})|$$ form the stability matrix
$$\Lambda^T=\left[
\begin{array}{cccccccccc}
0 & 0 & |1-\epsilon| & 0 & 0 & 0 & 0 & 0 & 2|ab| & 0\\
0 & 0 & 0 & 0 & 0 & 2|ab| & 0 & 0 & 0 & |1-\epsilon|\\
1 & 0 & 0 & 0 & 0 & 0 & 0 & 0 & 0 & 0\\
1 & 0 & 0 & 0 & 0 & 0 & 0 & 0 & 0 & 0\\
0 & 0 & 0 & 1 & 0 & 0 & 0 & 0 & 0 & 0\\
0 & 0 & 0 & 0 & 1 & 0 & 0 & 0 & 0 & 0\\
0 & 1 & 0 & 0 & 0 & 0 & 0 & 0 & 0 & 0\\
0 & 0 & 0 & 0 & 0 & 0 & 1 & 0 & 0 & 0\\
0 & 0 & 0 & 0 & 0 & 0 & 0 & 1 & 0 & 0\\
0 & 1 & 0 & 0 & 0 & 1 & 0 & 0 & 0 & 0\\
\end{array}
\right].$$
From this one can compute that
\begin{equation}\label{eq3.5}
\rho(\mathcal{H})=\sqrt{\frac{|1-\epsilon|+\sqrt{|1-\epsilon|^2+8|ab|}}{2}}.
\end{equation}

Importantly, $\rho(\mathcal{H})<1$ if and only if $|1-\epsilon|+2|ab|<1$. By corollary \ref{corollary1} the time-delayed interaction $H$ stabilizes the local systems $(\varphi,X^T)$ if $|1-\epsilon|+2|ab|<1$, which is the same condition derived in example 1 for the global stability of the undelayed version of this system.
\end{example}

With this example in mind we turn our attention to determining how modifications of an interaction's time-delays effects whether the associated time-delayed network has a globally attracting fixed point.

\begin{figure}
  \begin{center}
    \begin{overpic}[scale=.425]{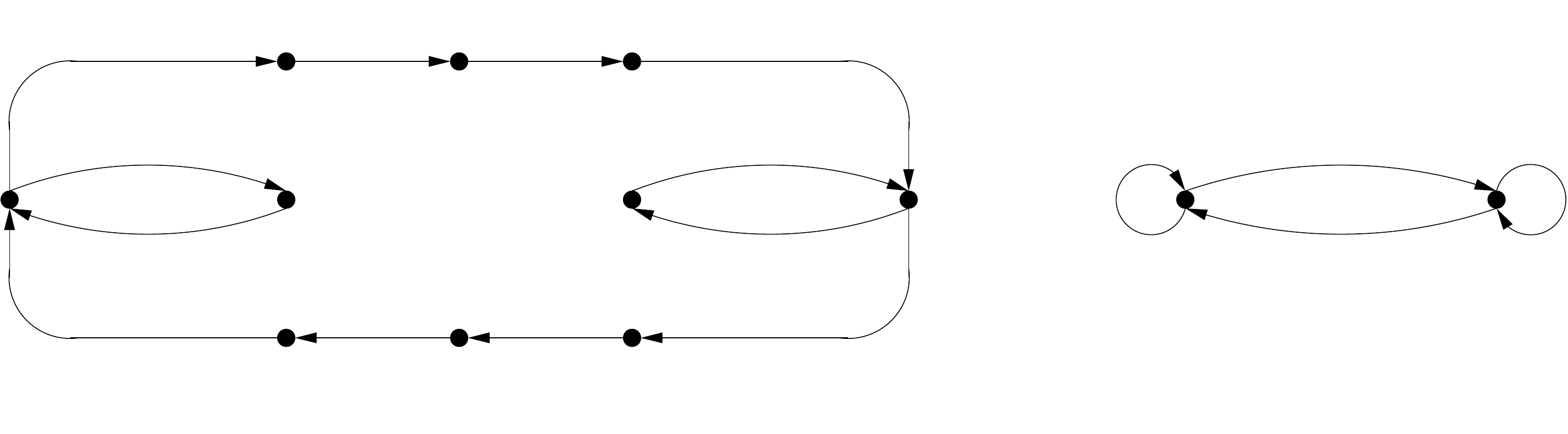}
    \put(27,-1){$\Gamma_{\mathcal{N}_{\mathcal{H}}}$}
    \put(-3,14.5){$v_1$}
    \put(59,14.5){$v_2$}
    \put(20,14.5){$v_{11;11}$}
    \put(32.5,14.5){$v_{22;11}$}
    \put(15,25.5){$v_{12;13}$}
    \put(26,25.5){$v_{12;23}$}
    \put(37,25.5){$v_{12;33}$}
    \put(15,3.5){$v_{21;33}$}
    \put(26,3.5){$v_{21;23}$}
    \put(37,3.5){$v_{21;13}$}
    \put(75.5,17){$v_1$}
    \put(93,17){$v_2$}
    \put(84,8){$\Gamma_{\mathcal{U}_{\mathcal{H}}}$}
    \end{overpic}
  \end{center}
  \caption{The graph of interactions $\Gamma_{\mathcal{N}_{\mathcal{H}}}$ and $\Gamma_{\mathcal{U}_{\mathcal{H}}}$ corresponding to $(\mathcal{N}_{\mathcal{H}},X_{\mathcal{H}})$ and $(\mathcal{U}_{\mathcal{H}},X)$ in examples 2 and 3 respectively.}\label{fig1}
\end{figure}

\section{Removing Time-Delays}
In this section we describe how modifying an interaction's time-delays effects a network's stability. Our first result is that if a time-delayed dynamical network is known to have stable dynamics then removing these time-delays will not destabilize the network. However, the converse of this statement does not hold. In fact, we give an example of an undelayed network with stable dynamics that becomes unstable with the addition of delays.

On the other hand, if time-delays are introduced in a specific way into a network's interaction the time-delays will not be able to destabilize the network. Interactions with this property are called \emph{non-distributed}. This concept allows us to study the dynamic stability of delayed networks with such interactions in terms of undelayed networks, which is significantly simpler from a computational point of view.

\subsection{Undelayed Dynamical Networks}
Here we formalize the notion of modifying a system's time-delays. Our first objective is to describe how to change a delayed dynamical network to a network without time-delays.
\begin{definition}
Let $(\mathcal{H},X^T)$ be the time-delayed dynamical network with local systems $(\varphi,X^T)$ and interaction $H:X^T\rightarrow X$. The map $\mathcal{U}_{\mathcal{H}}:X\rightarrow X$ given by $$\mathcal{U}_{\mathcal{H}}(\textbf{x})=\mathcal{H}(\textbf{x},\dots,\textbf{x})$$ generates the \emph{undelayed dynamical network} $(\mathcal{U}_{\mathcal{H}},X)$ with local systems $(\varphi,X)$ and interaction $\mathcal{U}_H:X\rightarrow X$ defined as
$$\mathcal{U}_{H}(\textbf{x})=H(\textbf{x},\dots,\textbf{x}).$$
\end{definition}

Simply put, $(\mathcal{U}_{\mathcal{H}},X)$ is the delayed dynamical network $(\mathcal{H},X^T)$ with its delays removed. The following result relates the dynamic stability of the undelayed network $(\mathcal{U}_{\mathcal{H}},X)$ to the stability of $(\mathcal{H},X^T)$.

\begin{theorem}\label{theorem3}
Suppose $A$ is a stability matrix of the time-delayed dynamical network $(\mathcal{H},X^T)$. If $\rho(A)<1$ then the undelayed dynamical network $(\mathcal{U}_{\mathcal{H}},X)$ has a globally attracting fixed point.
\end{theorem}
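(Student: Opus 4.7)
My plan is to produce a stability matrix $B$ of the undelayed dynamical network $(\mathcal{U}_{\mathcal{H}}, X)$ with $\rho(B)<1$ and then invoke Theorem \ref{stability}. Since $A$ is a stability matrix of $(\mathcal{N}_{\mathcal{H}},X_{\mathcal{H}})$, its entries encode upper Lipschitz bounds $\tilde{\Lambda}_{i,\tau;j}$ for $\mathcal{H}_j$ w.r.t.\ each delayed input $(i,\tau)\in\mathcal{I}^j$, together with bounds $\ge 1$ on the identity shift components $\mathcal{H}_{i,j;\ell,m}$. Applying the Lipschitz estimate for $\mathcal{H}_j$ with a constant-in-time argument gives
\[
d\bigl(\mathcal{U}_{\mathcal{H},j}(\textbf{x}),\mathcal{U}_{\mathcal{H},j}(\textbf{y})\bigr)=d\bigl(\mathcal{H}_j(\textbf{x},\dots,\textbf{x}),\mathcal{H}_j(\textbf{y},\dots,\textbf{y})\bigr)\le \sum_i\Bigl(\sum_{\tau:(i,\tau)\in\mathcal{I}^j}\tilde{\Lambda}_{i,\tau;j}\Bigr)d(x_i,y_i),
\]
so by Proposition \ref{prop-1} the matrix $B$ with $B_{ji}=\sum_\tau\tilde{\Lambda}_{i,\tau;j}$ is a stability matrix of $(\mathcal{U}_{\mathcal{H}},X)$ (viewed without local dynamics, cf.\ Proposition \ref{prop-2}).

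For the core step I plan to argue by contradiction. Suppose $\rho(B)\ge 1$, and let $\tilde{v}\ge 0$, $\tilde{v}\ne 0$, be a nonnegative Perron-Frobenius eigenvector of $B$, so $B\tilde{v}=\rho(B)\tilde{v}$. I would lift $\tilde{v}$ to the vector $V$ on $X_{\mathcal{H}}$ that is constant along every shift chain: $V_i=\tilde{v}_i$ for $i\in\mathcal{I}$ and $V_{i,j;\ell,m}=\tilde{v}_i$ for each $(i,j;\ell,m)\in\mathcal{I}_{\mathcal{H}}$, which is consistent with the identification $x_{i,j;0,m}=x_i$. Along any chain the corresponding entries of $A$ are $\ge 1$, so $(AV)_{i,j;\ell,m}\ge V_{i,j;\ell-1,m}=\tilde{v}_i=V_{i,j;\ell,m}$. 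On a main index $j$, the direct contribution from $x_i$ and from each chain endpoint $x_{i,j;m,m}$ sums to
\[
(AV)_j=\sum_i\Bigl(\sum_\tau\tilde{\Lambda}_{i,\tau;j}\Bigr)\tilde{v}_i=(B\tilde{v})_j=\rho(B)\tilde{v}_j\ge V_j,
\]
so $AV\ge V$ componentwise with $V\ge 0$, $V\ne 0$.

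The Collatz-Wielandt characterization of the spectral radius of a nonnegative matrix then forces $\rho(A)\ge 1$, contradicting $\rho(A)<1$; hence $\rho(B)<1$, and Theorem \ref{stability} applied to $(\mathcal{U}_{\mathcal{H}},X)$ supplies the globally attracting fixed point. The main obstacle is the bookkeeping in the lifting step, where one has to match the main-block and shift-chain entries of $A$ (as laid out in the construction of $\mathcal{N}_{\mathcal{H}}$ preceding Theorem \ref{next}) against the correct contributions in $AV$; once the correspondence is set up properly, the Collatz-Wielandt inequality makes the contradiction immediate and avoids any need to analyze positivity or irreducibility of a Perron eigenvector of $A$ itself.
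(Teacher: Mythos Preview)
Your argument is correct and genuinely different from the paper's. The paper proceeds inductively, removing one delay at a time: given a delayed variable $x_\ell^{k-\tau}$ it replaces it by $x_\ell^{k-(\tau-1)}$, builds the modified stability matrix, and uses a special construction $M\mapsto M_\theta$ (Lemma~\ref{lemma103} and Corollary~\ref{corollary3}) to show that $\rho<1$ is preserved at each step until all delays are gone. Your approach instead collapses all delays at once to produce $B$, lifts a nonnegative Perron eigenvector of $B$ to a constant-along-chains vector $V$, and obtains $AV\ge V$, which via Collatz--Wielandt forces $\rho(A)\ge 1$. This is shorter and more elementary: it avoids the $M_\theta$ machinery entirely and bypasses any irreducibility discussion of $A$. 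Two small points are worth tightening. First, in the row of $A$ indexed by $j\in\mathcal{I}$ there may be nonzero entries at positions not corresponding to any $(i,\tau)\in\mathcal{I}^j$; these only increase $(AV)_j$, so your equality $(AV)_j=(B\tilde v)_j$ should really be $\ge$, which is all you need (alternatively, zeroing such entries only lowers $\rho(A)$, so one may assume they vanish). Second, the existence of a nonnegative eigenvector of $B$ for the eigenvalue $\rho(B)$ holds for arbitrary nonnegative matrices, not just irreducible ones; this is standard but goes slightly beyond the Perron--Frobenius theorem as stated in the paper. The payoff of the paper's more elaborate route is that the same step-by-step mechanism is reused verbatim to prove the partial converse for non-distributed delays (Theorem~\ref{theorem3.45}), where one must \emph{add} delays back in; your one-shot lifting argument does not immediately supply that direction.
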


That is, if a time-delayed interaction is known to stabilize or maintain the stability of a set of local systems then the undelayed version of this interaction will do the same (see example 4). However, the converse of theorem \ref{theorem3} does not hold as the following example shows.

\begin{example} \textbf{(A Stable Undelayed Network Associated with an Unstable Delayed Network)}
Consider the time-delayed dynamical network $(\mathcal{H},X^T)$ given by
$$\mathcal{H}(\textbf{x}^{k},\textbf{x}^{k-1})=
\left[\begin{array}{l}
(1-\epsilon)x_1^k+\tanh(bx^k_2)-\tanh(bx^{k-1}_2)\\
(1-\epsilon)x_2^k+\tanh(bx^k_1)-\tanh(bx^{k-1}_1)
\end{array}\right]$$
where $\epsilon=1/2$, $b=1$, and $X=\mathbb{R}^2$. For simplicity we consider $(\mathcal{H},X^T)$ as a delayed network with no local dynamics. Notice that the map
$$\mathcal{N}_\mathcal{H}(\textbf{x})=
\left[\begin{array}{l}
(\mathcal{N}_\mathcal{H})_1(x_1,x_2,x_{21;11})\\
(\mathcal{N}_\mathcal{H})_2(x_1,x_2,x_{12;11})\\
(\mathcal{N}_\mathcal{H})_{12;11}(x_1)\\
(\mathcal{N}_\mathcal{H})_{21;11}(x_2)
\end{array}\right]=
\left[\begin{array}{l}
(1-\epsilon)x_1+\tanh(bx_2)-\tanh(bx_{21;11})\\
(1-\epsilon)x_2+\tanh(bx_1)-\tanh(bx_{12;11})\\
x_1\\
x_2
\end{array}\right]
$$
has the fixed point $\tilde{\textbf{x}}=(0,0,0,0)\in\mathbb{R}^4$. As the matrix
$$D\mathcal{N}_\mathcal{H}(\tilde{\textbf{x}})=
\left[\begin{array}{cccc}
1-\epsilon&b&0&-b\\
b&1-\epsilon&-b&0\\
1&0&0&0\\
0&1&0&0
\end{array}\right]
$$ it follows that $\rho\big(D\mathcal{N}_\mathcal{H}(\tilde{\textbf{x}})\big)=(1+\sqrt{17})/4>1$. Hence, $\tilde{\textbf{x}}$ is a repelling fixed point of $(\mathcal{N}_{\mathcal{H}},X_{\mathcal{H}})$.

Since $(\mathcal{N}_{\mathcal{H}},X_{\mathcal{H}})$ does not have a globally attracting fixed point the same holds for $(\mathcal{H},X^T)$ by theorem \ref{next}. However, the undelayed dynamical network $(\mathcal{U}_{\mathcal{H}},X)$ is given by
$$\mathcal{U}_{\mathcal{H}}(\textbf{x})=
\left[\begin{array}{l}
(1-\epsilon)x_1\\
(1-\epsilon)x_2
\end{array}\right]=
\frac{1}{2}\left[\begin{array}{l}
x_1\\
x_2
\end{array}\right]
$$ which has $(0,0)\in\mathbb{R}^2$ as a globally attracting fixed point.

Hence, it is possible when removing time-delays to effect the global stability of a system. Moreover, even though $(\mathcal{U}_{\mathcal{H}},X)$ has a globally attracting fixed point the network $(\mathcal{H},X^T)$ has no stability matrix $A$ with the property $\rho(A)<1$.
\end{example}

Theorem \ref{theorem3} does however have a partial converse if we restrict ourselves to specific types of time-delayed interactions. Recall from definition \ref{DDN} that a time-delayed interaction is a function $H:X^T\rightarrow X$ given by $$H_j:\bigoplus_{(i,\tau)\in \mathcal{I}^j} X_i^\tau\rightarrow X_j$$
where the set $\mathcal{I}^j\subseteq\mathcal{I}\times\mathcal{T}$.

\begin{definition}\label{SD}
The time-delayed interaction $H:X^T\rightarrow X$ is called \emph{non-distributed} if for all $j\in\mathcal{I}$ the set $\mathcal{I}^j\subseteq\mathcal{I}\times\mathcal{T}$ has the property that
$$(i,\mu)\in\mathcal{I}^j \Rightarrow (i,\nu)\notin\mathcal{I}^j \ \ \text{for all} \ \ \nu\neq \mu.$$
If a time-delayed interaction does not have this property we say it is \emph{distributed}.
\end{definition}

Equivalently, $H:X^T\rightarrow X$ is a \emph{non-distributed interaction} if for each $i,j\in\mathcal{I}$ the component $H_j$ depends on at most one variable of the form $x_i^\tau$. If $(\mathcal{H},X^T)$ has an interaction which is non-distributed then we say the network $(\mathcal{H},X^T)$ is also non-distributed and write $(\mathcal{H},X^T)\in nd(X^T)$.

\begin{theorem}\label{theorem3.45}
Suppose $(\mathcal{H},X^T)\in nd(X^T)$. Then there is a stability matrix $A$ of $(\mathcal{H},X^T)$ with the property $\rho(A)<1$ if and only if there is a stability matrix $\tilde{A}$ of $(\mathcal{U}_{\mathcal{H}},X)$ with $\rho(\tilde{A})<1$.
\end{theorem}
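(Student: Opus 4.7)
The plan is to reduce both directions to a spectral equivalence between the undelayed stability matrix and a one-parameter family $M(\lambda)$ of nonnegative $n \times n$ matrices encoding the time delays, and then conclude by a monotonicity-and-comparison argument. First I normalize stability matrices of the delayed network. The shift components $\mathcal{H}_{\delta}$ of $\mathcal{N}_{\mathcal{H}}$ are identity maps with Lipschitz constant exactly $1$, so any stability matrix $A$ of $(\mathcal{H}, X^T)$ dominates entrywise the ``normal form'' $A^{\star}$ obtained by keeping its main-block Lipschitz constants $c_{ij}$ (bounding how $\tilde{\mathcal{H}}_j$ depends on the delayed proxy of $x_i$, and already incorporating the factor $L_i$ of $\varphi_i$) and replacing all shift entries by $1$; hence $\rho(A^{\star}) \leq \rho(A)$. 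Because the interaction is non-distributed, each ordered pair $(i,j)$ with $i \in \mathcal{I}_j$ carries a \emph{unique} delay $m(i,j) \in \{0, 1, \ldots, T-1\}$ and the same constants $c_{ij}$ serve as valid Lipschitz bounds for the undelayed component $\mathcal{U}_{H,j}$, so the $n \times n$ matrix $\tilde{A}^{\star}$ with $(\tilde{A}^{\star})_{ji} = c_{ij}$ is a stability matrix of $(\mathcal{U}_{\mathcal{H}}, X)$.

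Next define, for $\lambda > 0$, the nonnegative matrix
\[ M(\lambda)_{ji} = c_{ij}\,\lambda^{-m(i,j)}, \]
so that $M(1) = \tilde{A}^{\star}$. Writing an eigenvector equation $A^{\star} z = \lambda z$ with $\lambda > 0$ and splitting $z$ into its main coordinates $x = (x_1, \ldots, x_n)$ and its shift coordinates $y_{i,\ell}$, the shift rows $\lambda y_{i,\ell} = y_{i,\ell-1}$ (with $y_{i,0} := x_i$) iterate to $y_{i,\ell} = \lambda^{-\ell} x_i$, and substitution into the main rows collapses the system to $M(\lambda) x = \lambda x$. A Perron eigenvector of $A^{\star}$ has nonzero main block $x$ (otherwise the shift iteration forces $z = 0$), so $\rho(A^{\star})$ is a Perron eigenvalue of $M(\rho(A^{\star}))$ with a positive eigenvector, giving $\rho\bigl(M(\rho(A^{\star}))\bigr) = \rho(A^{\star})$.

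Because each $m(i,j) \geq 0$, the entries of $M(\lambda)$ are pointwise nonincreasing in $\lambda$, so monotonicity of the spectral radius on the cone of nonnegative matrices makes $\lambda \mapsto \rho(M(\lambda))$ nonincreasing while $\lambda \mapsto \lambda$ is strictly increasing. Hence $\rho(M(\lambda)) = \lambda$ admits a unique positive solution $\lambda^{\star}$, and by the previous paragraph $\lambda^{\star} = \rho(A^{\star})$. Strict monotonicity of $\rho(M(\lambda)) - \lambda$ then yields
\[ \rho(A^{\star}) < 1 \iff \rho(M(1)) < 1 \iff \rho(\tilde{A}^{\star}) < 1. \]
For the forward implication of the theorem, given a stability matrix $A$ of $(\mathcal{H}, X^T)$ with $\rho(A) < 1$, the entrywise domination $\rho(A^{\star}) \leq \rho(A) < 1$ combined with the equivalence furnishes the undelayed stability matrix $\tilde{A}^{\star}$ with $\rho(\tilde{A}^{\star}) < 1$. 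For the converse, starting from an undelayed stability matrix with $\rho < 1$, extract its entries as the constants $c_{ij}$ (which dominate the optimal undelayed Lipschitz data, hence the main-block Lipschitz data of $\mathcal{N}_{\mathcal{H}}$), assemble the corresponding $A^{\star}$, verify it is a stability matrix of $(\mathcal{H}, X^T)$, and invoke the equivalence once more to get $\rho(A^{\star}) < 1$.

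The main obstacle I anticipate is the identification $\rho(A^{\star}) = \lambda^{\star}$: it requires that any Perron eigenvector of $A^{\star}$ have a nontrivial main block, which relies on the shift chains terminating at the main variables $x_i$ via the identifications $x_{i,j;\ell,m} = x_{s,t;u,v}$ built into $X_{\mathcal{H}}$ so that all shift chains sharing the same base index $i$ merge into a single chain rooted at $x_i$. The non-distributed hypothesis is essential, because otherwise the undelayed Lipschitz bound on $\mathcal{U}_{H,j}$ would involve summing over several delayed copies of each $x_i$, and the collapse of $A^{\star} z = \lambda z$ to the single $n \times n$ equation $M(\lambda) x = \lambda x$ would fail.
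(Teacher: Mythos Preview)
Your approach is correct and takes a genuinely different route from the paper. The paper proceeds inductively: it removes (or inserts) a single unit of delay at a time, using the one-vertex Schur-complement construction $M\mapsto M_\theta$ and invoking Corollary~\ref{corollary3} at each step, thereby piggy-backing on the machinery already set up in the proof of Theorem~\ref{theorem3}. You instead collapse all delays at once by introducing the parametrized $n\times n$ matrix $M(\lambda)$ and identifying the positive spectrum of the normalized delayed stability matrix $A^\star$ with the fixed points of $\lambda\mapsto\rho(M(\lambda))$; the monotonicity of this map then yields the equivalence $\rho(A^\star)<1\iff\rho(M(1))<1$ in one stroke. Your argument is conceptually cleaner and closer to the classical characteristic-equation treatment of delay systems, while the paper's step-by-step approach has the virtue of reusing Lemma~\ref{lemma103} verbatim and making the inductive structure of Theorem~\ref{theorem3} do all the work.

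One point deserves care. You write that the Perron eigenvector of $A^\star$ has a \emph{positive} main block $x$, and conclude $\rho(M(\rho(A^\star)))=\rho(A^\star)$. In the reducible case the Perron eigenvector of $A^\star$ is only guaranteed to be nonnegative, so you obtain merely $\rho(A^\star)\in\sigma(M(\rho(A^\star)))$, hence $\rho(A^\star)\le\rho(M(\rho(A^\star)))$; this alone gives $g(\rho(A^\star))\ge 0$ for $g(\lambda)=\rho(M(\lambda))-\lambda$, whence $\rho(A^\star)\le\lambda^\star$. The reverse inequality follows by lifting a nonnegative Perron eigenvector of $M(\lambda^\star)$ to an eigenvector of $A^\star$ via $y_{i,\ell}=(\lambda^\star)^{-\ell}x_i$, giving $\lambda^\star\in\sigma(A^\star)$ and hence $\lambda^\star\le\rho(A^\star)$. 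With this small repair (which you anticipated in your final paragraph) the identification $\rho(A^\star)=\lambda^\star$ holds without any irreducibility assumption, and the rest of your argument goes through unchanged.
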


Therefore, if a time-delayed dynamical network is stable and has a non-distributed interaction then any change in the interaction's time-delays that maintains this property does not effect the network's stability. The following is an immediate corollary of theorems \ref{stability} and \ref{theorem3.45}.

\begin{corollary}
Suppose $(\mathcal{H},X^T)\in nd(X^T)$ and that $A$ and $\tilde{A}$ are stability matrices of $(\mathcal{H},X^T)$ and $(\mathcal{U}_{\mathcal{H}},X)$ respectively. If either $\rho(A)<1$ or $\rho(\tilde{A})<1$ then both $(\mathcal{H},X^T)$ and $(\mathcal{U}_{\mathcal{H}},X)$ are globally stable.
\end{corollary}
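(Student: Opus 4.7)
The plan is to exploit the chain structure the non-distributed hypothesis imposes on $(\mathcal{N}_{\mathcal{H}},X_{\mathcal{H}})$ and then to reduce the eigenvalue problem on $X_{\mathcal{H}}$ to a $\lambda$-dependent eigenvalue problem on $\mathbb{R}^n$. Under the hypothesis $(\mathcal{H},X^T)\in nd(X^T)$, for each pair $i,j\in\mathcal{I}$ there is at most one delay $\tau_{ij}\geq 0$ with $(i,\tau_{ij})\in\mathcal{I}^j$; using the identifications in (\ref{eq3}) the auxiliary coordinates of $X_{\mathcal{H}}$ re-index as chains $z_i^{(\ell)}$, $1\leq\ell\leq M_i$, depending only on the source $i$ and depth $\ell$, with the convention $z_i^{(0)}:=x_i$. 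By Corollary \ref{corollary1} any stability matrix $A$ of $(\mathcal{H},X^T)$ is a stability matrix of $(\mathcal{N}_{\mathcal{H}},X_{\mathcal{H}})$ viewed without local dynamics, and such an $A$ must therefore satisfy $A_{x_j,z_i^{(\tau_{ij})}}\geq \Lambda^H_{ij}L_i$ on each ``interaction row'' (with $\Lambda^H_{ij}$ a Lipschitz bound for $H_j$ in its unique $x_i^{-\tau_{ij}}$ slot) and $A_{z_i^{(\ell)},z_i^{(\ell-1)}}\geq 1$ on each ``chain row'' coming from (\ref{eq5}), with the remaining entries taken to be $0$.

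I would set up a bidirectional translation between the two matrices. Given such an $A$, define $\tilde A_{ji}:=A_{x_j,z_i^{(\tau_{ij})}}$; non-distributedness says $\mathcal{U}_{H,j}$ depends on $x_i$ only through the single slot $x_i^{-\tau_{ij}}$, so $\tilde A$ is a stability matrix of $(\mathcal{U}_{\mathcal{H}},X)$. Conversely, given $\tilde A$, one builds an $A$ by placing $\tilde A_{ji}$ at position $(x_j,z_i^{(\tau_{ij})})$, assigning every chain entry the minimum admissible value $1$, and setting all remaining entries to $0$; the same Lipschitz bookkeeping verifies that this $A$ is a stability matrix of $(\mathcal{H},X^T)$.

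The heart of the argument is a spectral reduction. Solving $Av=\lambda v$ row by row for $\lambda\neq 0$, the chain rows telescope to $v_{z_i^{(\ell)}}=C_i^{(\ell)}v_{x_i}/\lambda^\ell$ with $C_i^{(\ell)}:=\prod_{k=1}^\ell A_{z_i^{(k)},z_i^{(k-1)}}\geq 1$; substituting into the interaction rows yields $\lambda v_x=C(\lambda)v_x$ on $\mathbb{R}^n$, where $C(\lambda)_{ji}:=\tilde A_{ji}C_i^{(\tau_{ij})}\lambda^{-\tau_{ij}}$ and $v_x=(v_{x_1},\ldots,v_{x_n})$. Hence a nonzero $\lambda$ is an eigenvalue of $A$ (necessarily with $v_x\neq 0$) if and only if it is an eigenvalue of the nonnegative matrix $C(\lambda)$. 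Moreover $\lambda\mapsto\rho(C(\lambda))$ is continuous and nonincreasing on $(0,\infty)$ and tends to a finite limit as $\lambda\to\infty$, namely the spectral radius of $\tilde A$ with all $\tau_{ij}>0$ entries zeroed out.

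Both implications then follow from Perron--Frobenius combined with the intermediate value theorem. For the forward direction, assume $\rho(A)<1$ but $\rho(\tilde A)\geq 1$ for contradiction: since $C(1)\geq \tilde A$ entrywise we have $\rho(C(1))\geq 1$, while $\rho(C(\lambda))-\lambda\to -\infty$ as $\lambda\to\infty$, so continuity yields a $\lambda^\ast\geq 1$ with $\lambda^\ast=\rho(C(\lambda^\ast))$; since $\rho(C(\lambda^\ast))$ is an eigenvalue of $C(\lambda^\ast)$ by Perron--Frobenius, $\lambda^\ast$ is also an eigenvalue of $A$, contradicting $\rho(A)<1$. For the reverse direction, use the $A$ built in paragraph two from $\tilde A$, which has chain entries equal to $1$ and hence $C_i^{(\ell)}=1$; then $C(\lambda)\leq \tilde A$ for all $\lambda\geq 1$, so $\rho(C(\lambda))\leq \rho(\tilde A)<1\leq \lambda$ excludes every real $\lambda\geq 1$ from the spectrum of $A$, and Perron--Frobenius for the nonnegative $A$ forces $\rho(A)<1$. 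The main obstacle is the IVT step in the forward direction: for a non-canonical stability matrix with $C_i^{(\ell)}>1$ one loses the clean monotone comparison of $C(\lambda)$ with $\tilde A$ at $\lambda=1$, and the existence of a fixed point of $\rho(C(\cdot))$ above $1$ must instead be extracted from continuity and the behaviour at infinity.
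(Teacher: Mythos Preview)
Your argument is essentially correct and proves the corollary, but by a genuinely different route from the paper. In the paper the corollary is recorded as an immediate consequence of Theorem~\ref{stability} and Theorem~\ref{theorem3.45}; the substance lies in the proof of Theorem~\ref{theorem3.45}, which proceeds \emph{inductively}, removing or inserting one unit of delay at a time. Each such step replaces an edge $e_{\ell m}$ by a length-two path through a new vertex, and Corollary~\ref{corollary3} (built on Lemma~\ref{lemma103}) shows that this preserves the condition $\rho<1$. Your approach instead performs a single global Schur-type elimination of all chain variables at once, reducing the nonzero spectrum of $A$ to the fixed points of $\lambda\mapsto\rho(C(\lambda))$ on $(0,\infty)$, and then argues with the intermediate value theorem and Perron--Frobenius. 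The paper's edge-splitting lemma is more modular---the same device drives Theorem~\ref{theorem3} in the distributed case---while your one-shot reduction is cleaner for the non-distributed case and makes the role of the chain structure fully transparent.

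Two minor remarks. First, an arbitrary stability matrix of $(\mathcal{N}_{\mathcal{H}},X_{\mathcal{H}})$ may carry spurious nonnegative entries off the chain pattern (and the paper indexes $X_{\mathcal{H}}$ by $\mathcal{I}\cup\mathcal{I}_{\mathcal{H}}$ before imposing the identifications in (\ref{eq3}), so chains are \emph{a priori} indexed by $(i,j;\ell,m)$ rather than just $(i,\ell)$); you silently pass to the clean form by zeroing extraneous entries, which is legitimate since for nonnegative matrices this can only lower $\rho$, but it should be stated. Second, the ``main obstacle'' you flag at the end is not one: with your own definition $\tilde A_{ji}:=A_{x_j,z_i^{(\tau_{ij})}}$ one has $C(1)_{ji}=\tilde A_{ji}\,C_i^{(\tau_{ij})}\geq\tilde A_{ji}$ regardless of whether $C_i^{(\tau_{ij})}>1$, so the IVT step goes through without any further qualification.
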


\begin{example}
Consider the time-delayed dynamical network $(\mathcal{H},X^T)$ given by (\ref{eq2.1}) in example 2. The undelayed version of $(\mathcal{H},X^T)$ is the dynamical network $(\mathcal{U}_{\mathcal{H}},X)$ given by
$$\mathcal{U}_{\mathcal{H}}(\textbf{x})=\left[\begin{array}{l}
(1-\epsilon)x_1+2a\tanh(b x_2)+c_1\\
(1-\epsilon)x_2+2a\tanh(b x_1)+c_2
\end{array}
\right]$$
with local systems $\varphi_i(x_i)=(1-\epsilon)x_i+c_i$ and interaction
\begin{equation*}
\mathcal{U}_H(\textbf{x})=\left[\begin{array}{l}
x_1+2a\tanh(b \frac{x_{2}-c_2}{1-\epsilon})\\
x_2+2a\tanh(b \frac{x_{1}-c_1}{1-\epsilon})
\end{array}
\right]
\end{equation*}
where $i\in\{1,2\}$, $a,b,c_i\in\mathbb{R}$, $X=\mathbb{R}^2$, and $\epsilon\neq1$. Using  $\tilde{\Lambda}_{ij}=\max_{\textbf{x}\in X}|(D\mathcal{U}_H)_{ji}(\textbf{x})|$ and the constants $L_i=\max_{x\in X_i}|\varphi^\prime_i(x_i)|$ one has the stability matrix
$$\tilde{\Lambda}\cdot diag[L_1,L_2]=\left[
\begin{array}{cc}
|1-\epsilon|&2|ab|\\
2|ab|&|1-\epsilon|
\end{array}
\right]$$
of $(\mathcal{U}_{\mathcal{H}},\mathbb{R}^2)$ from which it follows that
\begin{equation}\label{eq5.1}
\rho(\mathcal{U}_{\mathcal{H}})=|1-\epsilon|+2|ab|.
\end{equation}
As can be seen from (\ref{eq2.2}) the dynamical network $(\mathcal{H},X^T)\in nd(X^T)$. Theorem \ref{theorem3.45} then implies that the time-delayed interaction $H$ stabilizes the local systems $(\varphi,X^T)$ if $|1-\epsilon|+2|ab|<1$.

It is important to note that the same condition was formulated in example 2. However, the computations used to find $\rho(\mathcal{U}_{\mathcal{H}})$ in this example are much simpler than those required to find $\rho(\mathcal{H})$ in example 2. In fact, from a computational point of view, it is always easier to analyze the stability of an undelayed dynamical network $(\mathcal{U}_{\mathcal{H}},X)$ than the time-delayed network $(\mathcal{H},X^T)$.
\end{example}

We note that if a time-delayed dynamical network $(\mathcal{H},X^T)$ is distributed, i.e. has a distributed time-delayed interaction, then the conclusions of theorem \ref{theorem3.45} may not hold. For example, the dynamical network $(\mathcal{U}_{\mathcal{H}},X^T)$ in example 3 has the stability matrix
$$\tilde{A}=
\left[\begin{array}{cc}
1/2&0\\
0&1/2
\end{array}\right]$$
with $\rho(\tilde{A})<1.$ However, $(\mathcal{H},X^T)$ does not have a globally attracting fixed point. The reason being is that $(\mathcal{H},X^T)\notin nd(X^T)$.

Example 3 also points out that certain types of time delays can be destabilizing to a system. On the other hand, theorem \ref{theorem3.45} states that if time-delays are introduced into a stable system in specific ways the resulting system will remain stable. More precisely, if delays are introduced into an undelayed interaction $F$ in a way that results in a non-distributed interaction $H$ this modification will not destabilize the network's dynamics.

\subsection{Cohen-Grossberg Networks with Time-Delays}
To apply the results of section 4.1 to the Cohen-Grossberg networks considered in this paper suppose $(\mathcal{H},X^T)$ is the time-delayed dynamical network given by
\begin{equation}\label{eq23}
\mathcal{H}_j(\textbf{x}^{k},\dots,\textbf{x}^{k-(T-1)})=
(1-\epsilon)x^{k-\tau_{jj}}_{j}+\sum_{i=1}^n W_{ij}\phi(x_i^{k-\tau_{ij}})+c_j
\end{equation}
with local systems $\varphi_i(x_i)=(1-\epsilon)x_i+c_i$ and time-delayed interaction
\begin{equation}\label{eq24}
H(\textbf{x}^{k},\dots,\textbf{x}^{k-(T-1)})=
x^{k-\tau_{jj}}_j+\sum_{i=1}^n W_{ij}\phi\Big(\frac{x_i^{k-\tau_{ij}}-c_i}{1-\epsilon}\Big).
\end{equation}
Here, $1\leq i,j\leq n$, $0\leq \tau_{ij}\leq T-1$, and $\epsilon\neq1$. As before $W\in\mathbb{R}^{n\times n}$, $X=\mathbb{R}^n$, and $\phi:\mathbb{R}\rightarrow\mathbb{R}$ is a smooth function with Lipschitz constant $\mathcal{L}$.

Equations (\ref{eq23}) and (\ref{eq24}) describes the Cohen-Grossberg network $(\mathcal{H},\mathbb{R}^{nT})$ with time-delays. Since $(\mathcal{H},X^T)\in nd(X^T)$ then as a corollary to theorem \ref{CGtheorem} and theorem \ref{theorem3.45} we have the following result.

\begin{theorem}\label{CGdel} \textbf{(Stability of Time-Delayed Cohen-Grossberg Networks)}
Let $(\mathcal{H},\mathbb{R}^{nT})$ be the time-delayed Cohen-Grossberg network given by (\ref{eq23}) where $\phi_i$ has Lipschitz constant $\mathcal{L}$. If $|1-\epsilon|+\mathcal{L}\rho(|W|)<1$ then $(\mathcal{H},\mathbb{R}^{nT})$ has a globally attracting fixed point.
\end{theorem}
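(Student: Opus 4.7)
The plan is to combine Theorem \ref{CGtheorem} with Theorem \ref{theorem3.45}. The underlying idea is that although the network $(\mathcal{H},\mathbb{R}^{nT})$ carries nontrivial time-delays, for each pair $(i,j)$ its interaction depends on only a single copy $x_i^{k-\tau_{ij}}$. This places $(\mathcal{H},\mathbb{R}^{nT})$ in the non-distributed class of section 4, and its stability can therefore be read off from its undelayed counterpart, to which the already proved Theorem \ref{CGtheorem} applies verbatim.

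First I would verify that $(\mathcal{H},\mathbb{R}^{nT})\in nd(\mathbb{R}^{nT})$. From the explicit form (\ref{eq24}) one reads off
\begin{equation*}
\mathcal{I}^j=\{(i,-\tau_{ij}):i\in\mathcal{I}\},
\end{equation*}
and since $\tau_{ij}$ is a single scalar, no variable $x_i$ appears at two distinct delays in $H_j$, meeting the criterion of Definition \ref{SD}. The diagonal term $x_j^{k-\tau_{jj}}$ outside the sum and the $i=j$ summand both use the same delay $\tau_{jj}$, so the non-distributed property is not threatened there.

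Next I would identify the undelayed reduction. Applying $\mathcal{U}_\mathcal{H}(\textbf{x})=\mathcal{H}(\textbf{x},\dots,\textbf{x})$ to (\ref{eq23}) collapses every $\tau_{ij}$ to $0$ and recovers exactly the Cohen-Grossberg network (\ref{CG}). Theorem \ref{CGtheorem} (or its proof) then supplies a stability matrix $\tilde{A}$ of $(\mathcal{U}_\mathcal{H},\mathbb{R}^n)$ with $\rho(\tilde{A})=|1-\epsilon|+\mathcal{L}\rho(|W|)$, which by hypothesis is strictly less than $1$. Finally, Theorem \ref{theorem3.45}, specialized to our non-distributed network, guarantees the existence of a stability matrix $A$ of $(\mathcal{H},\mathbb{R}^{nT})$ with $\rho(A)<1$, and Corollary \ref{corollary1} then yields the globally attracting fixed point. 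The only step requiring any genuine verification is the non-distributed check in the first step; everything else is a direct invocation of results already established in the paper.
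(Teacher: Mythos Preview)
Your proposal is correct and follows exactly the paper's own argument: the paper states the result as an immediate corollary of Theorem \ref{CGtheorem} and Theorem \ref{theorem3.45}, after observing that $(\mathcal{H},X^T)\in nd(X^T)$. Your write-up supplies the details the paper leaves implicit, including the useful check that the diagonal term $x_j^{k-\tau_{jj}}$ and the $i=j$ summand share the same delay so that the non-distributed condition is not violated.
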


The point of theorem \ref{CGdel} is that although time-delayed Cohen-Grossberg networks are more complicated systems than  Cohen-Grossberg networks without delays, the criteria for their stability is not.

\subsection{Proving Theorem \ref{theorem3} and Theorem \ref{theorem3.45}}
In order to prove theorems \ref{theorem3} and theorem \ref{theorem3.45} we will use the well known theorem of Perron and Frobenius and the following standard terminology.

A \emph{weighted directed graph} $G=(V,E,\omega)$ is a graph with vertices $V$, edges $E$, where the edge $e_{ij}\in E$ has weight $\omega(e_{ij})$. A \textit{path} $P$ in $G$ is an ordered sequence of distinct vertices $v_1,\dots,v_m\in V$ such that $e_{i,i+1}\in E$ for $1\leq i\leq m-1$. The vertices $v_2,\dots,v_{m-1}$ are the \emph{interior vertices} of $P$. In the case that the interior vertices are distinct, but $v_1=v_m$, then $P$ is a \textit{cycle}. A cycle $v_1,\dots,v_m$ is called a \emph{loop} if $m=1$.

A directed graph $G$ is called \emph{strongly connected} if there is a path from each vertex in the graph to every other vertex or $G$ has a single vertex. The \emph{strongly connected components} of $G$ are its maximal strongly connected subgraphs.

The \emph{weighted adjacency matrix} of a weighted directed graph $G=(V,E,\omega)$ is the matrix
$M$ with entries $M_{ij}=\omega(e_{ij})$ where $M_{ij}=0$ if $e_{ij}\notin E$. We say the graph $G$ is the graph \emph{associated} with its weighted adjacency matrix $M$. If $M\in\mathbb{R}^{n\times n}$ then $M$ is said to be \textit{irreducible} if the graph $G$ associated with $M$ is strongly connected. Moreover, the matrix $M$ is \textit{nonnegative} if $M_{ij}\geq 0$ for all $1\leq i,j\leq n$.

\begin{theorem}\textbf{(Perron-Frobenius)}\label{PF}
Let $M\in\mathbb{R}^{n\times n}$ and suppose that $M$ is irreducible and nonnegative. Then

\indent (a) $\rho(M)>0$;\\
\indent (b) $\rho(M)$ is an eigenvalue of $M$;\\
\indent (c) $\rho(M)$ is an algebraically simple eigenvalue of $M$; and\\
\indent (d) the left and right eigenvectors $\textbf{x}$ and $\textbf{y}$ associated with $\rho(M)$ have strictly positive entries.
\end{theorem}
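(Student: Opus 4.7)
The plan is to follow the classical Collatz--Wielandt approach. Define the functional $r(x) = \min_{i : x_i > 0}(Mx)_i / x_i$ on nonzero nonnegative vectors; since $r$ is scale-invariant it suffices to work on the simplex $\Delta = \{x \geq 0 : \sum_i x_i = 1\}$. Using irreducibility, $(I+M)^{n-1}$ has strictly positive entries, so every $x \in \Delta$ can be pushed to the strictly positive part of $\Delta$ without decreasing $r$; as $r$ is upper semicontinuous there, a maximizer $x^*$ with value $\rho := \sup_{\Delta} r$ exists and is strictly positive.

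Next I would show $M x^* = \rho x^*$ exactly. By definition $M x^* \geq \rho x^*$ componentwise; if strict inequality held in some coordinate, then applying $(I+M)^{n-1}$ would yield $y := (I+M)^{n-1} x^*$ with $My > \rho y$ strictly in every coordinate, hence $r(y) > \rho$, contradicting maximality. This simultaneously exhibits the positive right Perron eigenvector and shows $\rho$ is an eigenvalue. For part (b), any complex eigenvalue $\lambda$ with eigenvector $z$ satisfies $M|z| \geq |\lambda||z|$ componentwise (since $M \geq 0$), hence $r(|z|) \geq |\lambda|$ and $|\lambda| \leq \rho$, so $\rho = \rho(M)$. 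Part (a) follows because strong connectivity of the graph of $M$ forces some cycle of length $k$ and hence $(M^k)_{ii} > 0$ for some $i$, giving $\operatorname{tr}(M^k) > 0$ and therefore $\rho > 0$.

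The hard part will be algebraic simplicity (c). I would first establish geometric simplicity: if $My = \rho y$ for some real $y$, pick the largest $c \geq 0$ with $w := x^* - cy \geq 0$; then $w$ is a nonnegative Perron eigenvector with at least one zero coordinate, so $(I+M)^{n-1} w = (1+\rho)^{n-1} w$ must both have a zero coordinate and, if $w \neq 0$, be strictly positive — forcing $w = 0$, i.e.\ $y$ is a scalar multiple of $x^*$. To promote geometric to algebraic simplicity, apply the whole construction to $M^T$ (also irreducible and nonnegative) to obtain a strictly positive left eigenvector $y^*$ with $y^{*T} M = \rho y^{*T}$, which simultaneously furnishes part (d). Then invoke the adjugate identity $p'(\rho) = \operatorname{tr}(\operatorname{adj}(\rho I - M))$ for $p(t) = \det(tI - M)$: each diagonal entry $\operatorname{adj}(\rho I - M)_{ii}$ equals $\det(\rho I_{n-1} - M^{(i)})$ for the principal submatrix $M^{(i)}$, and since removing the $i$-th row and column of an irreducible nonnegative $M$ strictly reduces the spectral radius, $\rho I_{n-1} - M^{(i)}$ is a nonsingular $M$-matrix with positive determinant. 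Hence $p'(\rho) > 0$ and $\rho$ is a simple root of $p$, completing (c).
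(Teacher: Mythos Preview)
Your proof outline is correct and follows the classical Collatz--Wielandt variational approach, together with the standard adjugate/derivative argument for algebraic simplicity. All the key steps are sound: the push to the interior via $(I+M)^{n-1}$, the contradiction argument for $Mx^* = \rho x^*$, the bound $|\lambda| \leq \rho$ from $M|z| \geq |Mz|$, geometric simplicity via the ``maximal $c$'' trick, and the promotion to algebraic simplicity using $p'(\rho) = \sum_i \det(\rho I - M^{(i)}) > 0$ since proper principal submatrices of an irreducible nonnegative matrix have strictly smaller spectral radius.

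However, the paper does \emph{not} prove this theorem at all. Theorem~\ref{PF} is stated as a classical result (implicitly referencing Horn--Johnson \cite{Horn85}) and used as a black box in the proof of Lemma~\ref{lemma103}; only parts (a), (b), and (c) are actually invoked there, together with continuity of eigenvalues in the matrix entries. So you have supplied substantially more than the paper does: where the paper simply cites Perron--Frobenius, you reconstruct it from first principles. That is perfectly fine, but be aware that in the context of this paper no proof was expected or given.
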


If a graph $G$ is not strongly connected then it has strongly connected components $\mathbb{S}(G)_1\dots,\mathbb{S}(G)_N$. Let $M$ be the matrix associated with $G$ and $M_j$ the matrix associated with $\mathbb{S}_{j}(G)$. Then the eigenvalues are
\begin{equation}\label{eq1000}
\sigma(M)=\bigcup_{j=1}^N\sigma\big(M_j\big)
\end{equation}
from which it follows that
\begin{equation}\label{eq1001}
\rho(M)=\max_{1\leq j\leq N}\rho(M_j).
\end{equation}
We call a strongly connected component $\mathbb{S}_j(G)$ \textit{trivial} if it consists of a single vertex without loop in which case $\sigma\big(\mathbb{S}_j(G)\big)=\{0\}$.
The following result is found in \cite{Horn85}.

\begin{proposition}\label{proposition2}
Let $A,B\in\mathbb{R}^{n\times n}$ be nonnegative and suppose that A is irreducible. Then $\rho(A + B) > \rho(A)$ if $B\neq 0$.
\end{proposition}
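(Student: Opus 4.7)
The plan is to leverage the Perron--Frobenius theorem (Theorem \ref{PF}) together with the observation that adding a nonnegative matrix to an irreducible nonnegative matrix preserves irreducibility. First I would verify that $A+B$ is nonnegative and irreducible: adding $B\geq 0$ to $A$ can only add edges (or increase weights) in the weighted digraph associated with $A$, so strong connectivity is preserved. Hence Theorem \ref{PF} applies both to $A$ and to $A+B$, and in particular each possesses strictly positive right and left Perron eigenvectors.

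Next, let $\textbf{x}>0$ denote the right Perron eigenvector of $A$, so that $A\textbf{x}=\rho(A)\textbf{x}$. A direct computation gives
\[
(A+B)\textbf{x} \;=\; \rho(A)\textbf{x} + B\textbf{x}.
\]
Since $B$ is nonnegative and nonzero, some entry $B_{ij}>0$, and then $(B\textbf{x})_i\geq B_{ij}x_j>0$ because every $x_k>0$. Thus $(A+B)\textbf{x}\geq \rho(A)\textbf{x}$ componentwise, with strict inequality in at least one coordinate.

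The remaining step is the following Collatz--Wielandt-type comparison: if $M$ is nonnegative and irreducible and there exists $\textbf{x}>0$ with $M\textbf{x}\geq \lambda\textbf{x}$ and $M\textbf{x}\neq \lambda\textbf{x}$, then $\rho(M)>\lambda$. I would prove this by pairing the nonzero nonnegative vector $M\textbf{x}-\lambda\textbf{x}$ with the strictly positive left Perron eigenvector $\textbf{y}^{T}$ of $M$ supplied by Theorem \ref{PF}(d):
\[
\rho(M)\,\textbf{y}^{T}\textbf{x} \;=\; \textbf{y}^{T}M\textbf{x} \;>\; \lambda\,\textbf{y}^{T}\textbf{x},
\]
where the strict inequality uses $\textbf{y}>0$ together with $M\textbf{x}-\lambda\textbf{x}\geq 0$, not identically zero. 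Since $\textbf{y}^{T}\textbf{x}>0$, this forces $\rho(M)>\lambda$. Applying this lemma with $M=A+B$, $\lambda=\rho(A)$, and the $\textbf{x}$ from the previous paragraph yields $\rho(A+B)>\rho(A)$.

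The only delicate point is securing the strict inequality. A straightforward nonnegativity argument (comparing $A\leq A+B$) would only give $\rho(A+B)\geq \rho(A)$; what upgrades this to strictness is the use of the \emph{strictly} positive left eigenvector provided by Perron--Frobenius, which refuses to average a nonzero nonnegative defect $M\textbf{x}-\lambda\textbf{x}$ back to zero. This is precisely why the irreducibility hypothesis on $A$ (inherited by $A+B$) is indispensable in the statement.
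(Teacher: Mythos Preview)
Your argument is correct. Note, however, that the paper does not supply its own proof of this proposition: it is quoted as a known result from Horn and Johnson's \emph{Matrix Analysis} (reference \cite{Horn85} in the paper), so there is no in-paper proof to compare against. Your Perron--Frobenius/Collatz--Wielandt approach is exactly the standard textbook argument and would be an appropriate way to fill in the citation.
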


That is, the spectral radius of a nonnegative irreducible matrix is strictly monotonic in each of its entries.

Suppose $M\in\mathbb{R}^{n\times n}$ is nonnegative and $\alpha>0,\mathcal{L}\geq 0$ such that $\alpha+\mathcal{L}=M_{\ell m}$ for fixed integers $1\leq\ell,m\leq n$. Let $M_\theta=M_{\theta}(\alpha,\mathcal{L})\in\mathbb{R}^{n+1\times n+1}$ be the modified matrix $M$ given by
$$(M_\theta)_{ij}=\begin{cases}
M_{ij} \ &\text{for} \ \ 1\leq i,j\leq n, \ (i,j)\neq (\ell,m)\\
\alpha \ &\text{for} \ \ (i,j)=(0,m)\\
\mathcal{L} \ &\text{for} \ \ (i,j)=(\ell,m)\\
\theta \ &\text{for} \ \ (i,j)=(\ell,0)\\
0 &\text{otherwise}
\end{cases}
$$
for $0\leq i,j\leq n$ and $\theta>0$. If $G$ and $G_\theta$ are the graphs associated with $M$ and $M_\theta$ respectively the difference between these graphs is shown in figure 3.

\begin{figure}
  \begin{center}
    \begin{overpic}[scale=.425]{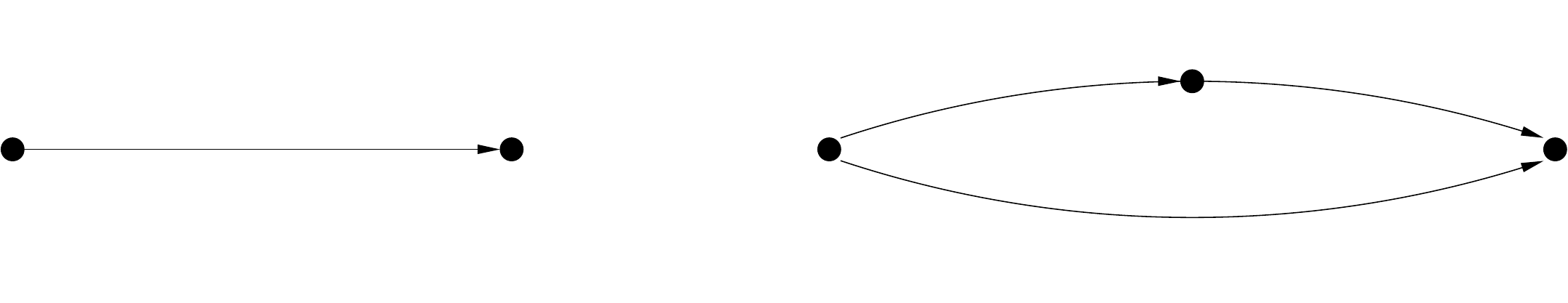}
    \put(-4,9){$v_\ell$}
    \put(34,9){$v_m$}
    \put(13,0){$G$}
    \put(12,12){$M_{\ell m}$}
    \put(48,9){$v_\ell$}
    \put(101,9){$v_m$}
    \put(74.5,16){$v_0$}
    \put(86,14){$\alpha$}
    \put(65,14){$\theta$}
    \put(75,7){$\mathcal{L}$}
    \put(74.5,0){$G_\theta$}
    \end{overpic}
  \end{center}
  \caption{The edge $e_{\ell m}$ in $G$ and its modified form in $G_\theta$ where $M_{\ell m}=\alpha+\mathcal{L}$}\label{fig30}
\end{figure}

\begin{lemma}\label{lemma103}
Suppose $M\in\mathbb{R}^{n\times n}$ is nonnegative and irreducible. Then\\
\indent (1) $\theta\leq \rho(M_\theta)\leq\rho(M)$ if and only if $\theta\leq\rho(M)$; and\\
\indent (2) $\rho(M)<\rho(M_\theta)<\theta$ if and only if $\theta>\rho(M)$.
\end{lemma}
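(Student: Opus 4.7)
The plan is to use the edge-subdivision structure of $M_\theta$ together with the Perron--Frobenius theorem to reduce the spectral problem for $M_\theta$ on $\mathbb{R}^{n+1}$ to a one-dimensional fixed-point equation involving the spectral radius of a parametrized perturbation of $M$.

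First, observe that $M_\theta$ is nonnegative and irreducible: its associated graph $G_\theta$ is obtained from $G$ by subdividing the edge $e_{\ell m}$ through a new vertex $v_0$ while retaining a direct $(\ell, m)$-edge of weight $\mathcal{L}$, so any path in $G$ lifts to a path in $G_\theta$, and $v_0$ is both reached from and reaches every other vertex (through $v_\ell$ and $v_m$ respectively). By Theorem \ref{PF}, $\lambda := \rho(M_\theta) > 0$ is simple and admits a strictly positive right eigenvector $\mathbf{y} = (y_0, y_1, \ldots, y_n)$.

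Second, I would eliminate $y_0$. Row $0$ of $M_\theta$ has a single nonzero entry $\alpha$ in column $m$, so $(M_\theta \mathbf{y})_0 = \lambda y_0$ gives $y_0 = \alpha y_m/\lambda$. Substituting into row $\ell$ yields
\[
\left(\tfrac{\theta\alpha}{\lambda} + \mathcal{L}\right) y_m + \sum_{j \neq m} M_{\ell j}\, y_j = \lambda y_\ell,
\]
while for rows $i \neq 0,\ell$ the equation reduces to $(M\mathbf{y}')_i = \lambda y_i$ with $\mathbf{y}' := (y_1,\dots,y_n) > 0$. These equations say precisely that $\mathbf{y}'$ is a positive eigenvector, with eigenvalue $\lambda$, of the matrix $\tilde M(\lambda) \in \mathbb{R}^{n\times n}$ obtained from $M$ by replacing its $(\ell,m)$ entry $\alpha + \mathcal{L}$ with $\theta\alpha/\lambda + \mathcal{L}$. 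Since $\theta\alpha/\lambda > 0$, $\tilde M(\lambda)$ has the same zero/nonzero pattern as $M$, hence is also nonnegative and irreducible; by Perron--Frobenius a positive eigenvector identifies its eigenvalue with the spectral radius, so $\rho(M_\theta)$ must satisfy the fixed-point equation $\lambda = f(\lambda)$ where $f(\lambda) := \rho(\tilde M(\lambda))$. This reduction is, in my view, the main obstacle of the proof; once it is set up, the rest is a clean monotonicity argument.

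Third, locate this fixed point. Because $\theta\alpha/\lambda + \mathcal{L}$ is strictly decreasing in $\lambda$, Proposition \ref{proposition2} ensures $f$ is continuous and strictly decreasing on $(0,\infty)$; hence $g(\lambda) := \lambda - f(\lambda)$ is strictly increasing, so any fixed point of $f$ is unique and equals $\rho(M_\theta)$. The crucial identity is $\tilde M(\theta) = M$, which gives $f(\theta) = \rho(M)$. If $\theta \leq \rho(M)$ then $g(\theta) \leq 0$, forcing $\rho(M_\theta) \geq \theta$; then monotonicity gives $f(\rho(M_\theta)) \leq f(\theta) = \rho(M)$, hence $\rho(M_\theta) \leq \rho(M)$, proving the forward direction of (1). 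The symmetric argument in the case $\theta > \rho(M)$ yields $\rho(M) < \rho(M_\theta) < \theta$, proving the forward direction of (2). Since the two hypotheses $\theta \leq \rho(M)$ and $\theta > \rho(M)$ are exhaustive and their conclusions are mutually exclusive, the converse implications in (1) and (2) follow automatically.
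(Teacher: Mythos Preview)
Your proof is correct. Both you and the paper reduce the $(n+1)\times(n+1)$ problem to the same parametrized $n\times n$ matrix $\tilde M(\lambda)$ obtained from $M$ by replacing $M_{\ell m}$ with $\theta\alpha/\lambda+\mathcal{L}$, and both hinge on the identity $\tilde M(\theta)=M$ together with entrywise monotonicity of the Perron root (Proposition~\ref{proposition2}). The execution differs: the paper carries out the reduction via a Schur-complement calculation on $\det(M_\theta-\lambda I)$, then shows $\rho(M)\in\sigma(M_{\rho(M)})$ and invokes a continuity argument in the parameter $\alpha$ to upgrade this to $\rho(M_{\rho(M)})=\rho(M)$, followed by a sandwiching argument $M_{\rho(M)}\leq M_{\gamma\rho(M)}\leq \gamma M_{\rho(M)}$ to obtain the inequalities for general $\theta$. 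You instead eliminate the $y_0$-coordinate of the Perron eigenvector of $M_\theta$ directly, use the positive-eigenvector characterization of the Perron root to obtain the fixed-point equation $\rho(M_\theta)=f(\rho(M_\theta))$ with $f(\lambda)=\rho(\tilde M(\lambda))$, and finish with a clean monotone-function argument on $g(\lambda)=\lambda-f(\lambda)$. Your route bypasses the paper's continuity step entirely and is somewhat more transparent; the paper's determinant identity, on the other hand, gives information about all eigenvalues of $M_\theta$, not just the Perron root.
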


\begin{proof}
Suppose $M\in\mathbb{R}^{n\times n}$ is nonnegative and irreducible. For $\theta>0$ and a nonzero $\lambda\in\mathbb{R}$, let $M_\theta-\lambda I_{n+1}$ be the block matrix
$$M_\theta-\lambda I_{n+1}=\left[
\begin{array}{cc}
A & B\\
C & D
\end{array}
\right]$$
where $A\in\mathbb{R}^{1\times 1}$ and $I_{n+1}\in\mathbb{R}^{n+1 \times n+1}$ is the identity matrix. Then the matrix $A=[\lambda]$ and $A^{-1}=[1/\lambda]$.
\begin{equation}\label{identity1}
\text{Using the identity} \ \det\left(\left[\begin{array}{cc}
A & B\\
C & D
\end{array}\right]\right)=\det(A)\cdot\det(D-CA^{-1}B)
\end{equation}
it follows that $\det(M_\theta-\lambda I_{n+1})=-\lambda\det(\tilde{M}_\theta(\lambda)-\lambda I_n)$ where $\tilde{M}_\theta(\lambda)\in\mathbb{R}^{n\times n}$ is the matrix given by
$$\big(\tilde{M}_\theta(\lambda)\big)_{ij}=\begin{cases}
M_{ij} \ &\text{for} \ \ (i,j)\neq (\ell,m)\\
\alpha\frac{\theta}{\lambda}+\mathcal{L} \ &\text{for} \ \ (i,j)=(\ell,m)
\end{cases}
$$
for $1\leq i,j\leq n$. Therefore, $\tilde{M}_\theta(\theta)=M$ as $\alpha+\mathcal{L}=M_{\ell m}$.

As $M$ is nonnegative and irreducible then both $\rho(M)>0$ and $\rho(M)\in\sigma(M)$ by the Perron-Frobenius theorem. Hence,
$$\det\big(M_{\rho(M)}-\rho(M)I_{n+1}\big)=-\rho(M)\det\big(\tilde{M}_{\rho(M)}(\rho(M))-\rho(M) I_n\big)=0.$$ Therefore, $\rho(M)\in \rho(M_{\rho(M)})$.

If $G$ and $G_0$ are the graphs associated with $M$ and $M_{\rho(M)}(0,M_{\ell m})$ respectively then $G$ and $G_0$ have the same nontrivial strongly connected components. Hence,
$$\rho\big(M_{\rho(M)}(0,M_{\ell m})\big)=\rho(M).$$
Since the eigenvalues of a matrix are continuous with respect to the matrix entries then parts (b) and (c) of the Perron-Frobenius theorem imply $\rho(M)=\rho\big(M_{\rho(M)}\big)$ so long as $\alpha>0,\mathcal{L}\geq 0$ and $\alpha+\mathcal{L}=M_{\ell m}$.

Let $\gamma>1$. As $M_{\rho(M)}$ is assumed to be both nonnegative and irreducible then proposition \ref{proposition2} implies $$\rho(M_{\rho(M)})<\rho\big(M_{\gamma\rho(M)}\big)<\rho\big(\gamma M_{\rho(M)}\big).$$
Since $\rho\big(\gamma M_{\rho(M)}\big)=\gamma\rho(M)$ then letting $\theta=\gamma\rho(M)$ implies
$$\rho(M)<\rho\big(M_{\theta}\big)<\theta \ \ \text{if and only if} \ \ \theta>\rho(M)$$
as $\theta>0$. Similarly, for $0\leq \gamma\leq 1$ it follows that
$$\theta\leq \rho(M_\theta)\leq\rho(M) \ \text{if and only if} \ \theta\leq\rho(M)$$
completing the proof.
\end{proof}

If the matrix $M$ in lemma \ref{lemma103} happens to be irreducible then the following holds.

\begin{corollary}\label{corollary3}
Suppose $M\in\mathbb{R}^{n\times n}$ is nonnegative. Then $\rho(M_\theta)<\theta$ if and only if $\rho(M)<\theta$.
\end{corollary}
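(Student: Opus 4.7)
The plan is to reduce the statement to the irreducible case handled by Lemma \ref{lemma103} by decomposing $M$ along the strongly connected components of its associated graph $G$, and then tracking how the construction $M \mapsto M_\theta$ interacts with that decomposition. Let $\mathbb{S}(G)_1,\dots,\mathbb{S}(G)_N$ denote the strongly connected components of $G$, with weighted adjacency matrices $M_1,\dots,M_N$, so that by (\ref{eq1001}) we have $\rho(M) = \max_j \rho(M_j)$, and note that $\alpha > 0$ forces $M_{\ell m} > 0$, so the edge $e_{\ell m}$ exists in $G$.

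Next I would examine $G_\theta$, which differs from $G$ only by subdividing the edge $e_{\ell m}$ through the new vertex $v_0$ (and reweighting). Two cases arise. If $v_\ell$ and $v_m$ lie in distinct SCCs of $G$, then no directed cycle in $G_\theta$ can pass through $v_0$ (since none passes from $v_m$ back to $v_\ell$), so $v_0$ is a trivial SCC of $G_\theta$ and every other SCC of $G_\theta$ has the same adjacency matrix as some $M_j$; hence $\rho(M_\theta) = \rho(M)$ and the equivalence is immediate. If instead $v_\ell$ and $v_m$ lie in a common SCC $\mathbb{S}(G)_k$, then in $G_\theta$ the vertex $v_0$ joins this SCC to form an enlarged SCC whose adjacency matrix is precisely $(M_k)_\theta$ (in the notation of Lemma \ref{lemma103}), since all the modified edges lie inside this enlarged component. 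All other SCCs are unchanged, so by (\ref{eq1001}),
\begin{equation*}
\rho(M_\theta) \;=\; \max\Bigl(\rho\bigl((M_k)_\theta\bigr),\; \max_{j\neq k}\rho(M_j)\Bigr).
\end{equation*}

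Finally, since $\mathbb{S}(G)_k$ is strongly connected the matrix $M_k$ is irreducible, so Lemma \ref{lemma103}(2) gives $\rho((M_k)_\theta) < \theta$ iff $\rho(M_k) < \theta$. Combining this with the trivial observation that $\max_j \rho(M_j) < \theta$ iff every $\rho(M_j) < \theta$ yields the desired equivalence $\rho(M_\theta) < \theta$ iff $\rho(M) < \theta$.

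The main obstacle is the SCC bookkeeping in Case 2: one must verify that the submatrix of $M_\theta$ on the enlarged component $\mathbb{S}(G)_k \cup \{v_0\}$ is exactly $(M_k)_\theta$ with the same parameters $\alpha, \mathcal{L}, \theta$, and that the enlarged component is still strongly connected (in particular when $\mathcal{L}=0$, so the direct edge $e_{\ell m}$ is absent in $G_\theta$ and one must route through $v_0$). A minor additional check is needed when $\mathbb{S}(G)_k$ is a single vertex $v_\ell = v_m$ with a self-loop, where $M_k = [M_{\ell\ell}]$ is still irreducible with $\rho(M_k) = M_{\ell\ell} > 0$, so Lemma \ref{lemma103} applies as stated.
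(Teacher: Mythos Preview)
Your proposal is correct and follows essentially the same strategy as the paper: decompose into strongly connected components, observe that when $v_\ell$ and $v_m$ lie in different components the nontrivial SCCs are unchanged so $\rho(M_\theta)=\rho(M)$, and when they lie in the same component $\mathbb{S}(G)_k$ apply Lemma~\ref{lemma103} to the irreducible block $M_k$ and take the maximum over components via (\ref{eq1001}). Your upfront observation that $\alpha>0$ forces $M_{\ell m}>0$ (so the edge $e_{\ell m}$ exists and any singleton component containing $v_\ell=v_m$ carries a loop) slightly streamlines the paper's separate treatment of the trivial-component sub-case.
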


\begin{proof}
Suppose $M\in\mathbb{R}^{n\times n}$ is nonnegative. If $M$ is irreducible then the result holds by lemma \ref{lemma103}. If $M$ is reducible then its associated graph $G$ is not strongly connected and can be decomposed into the strongly connected components $$\mathbb{S}(G)_1,\dots,\mathbb{S}(G)_N.$$ Let $M_i$ be the matrix associated with the strongly connected component $\mathbb{S}(G)_i$ and notice that each $M_i$ is nonnegative.

If $v_\ell,v_m\in\mathbb{S}(G)_i$ we consider two cases. First, suppose $\mathbb{S}(G)_i$ is nontrivial. Then the graph $G_\theta$ associated with $M_\theta$ has the strongly connected components
$$\mathbb{S}(G)_1\dots,\mathbb{S}_\theta(G)_i,\dots, \mathbb{S}(G)_N$$
where $(M_\theta)_i$ is the nonnegative matrix associated with $\mathbb{S}_\theta(G)_i$. As the matrix associated with a nontrivial strongly connected component is irreducible it follows from lemma \ref{lemma103} that $\rho\big((M_\theta)_i\big)<\theta$ if and only if $\rho(M)<\theta$. From equation (\ref{eq1001}),
$$\rho(M_\theta)=\max\{\rho(M_1),\dots,\rho\big((M_\theta)_i\big),\dots,\rho(M_N)\}= \max\{\rho(M),\rho\big((M_\theta)_i\big)\}.$$
Hence, if $\rho(M)<\theta$ then $\rho\big((M_\theta)_i\big)<\theta$ implying $\rho(M_\theta)<\theta$. Conversely, if $\rho(M_\theta)<\theta$ then $\rho(M)<\theta$ and the result holds in this case.

If $\mathbb{S}(G)_i$ is trivial then $\alpha,\mathcal{L}=0$ and $M_\theta$ has the strongly connected components
$$\mathbb{S}(G)_1,\dots,\mathbb{S}(G)_N,\mathbb{S}(G)_{N+1}$$
where $\mathbb{S}(G)_{N+1}$ is trivial with single vertex $v_0$. Hence, $\sigma(M_{N+1})=\{0\}$ implying $\rho(M_\theta)=\rho(M)$ by equation (\ref{eq1000}). The result then follows in the case where the vertices $v_\ell,v_m\in\mathbb{S}(G)_i$.

If $v_\ell\in\mathbb{S}(G)_i$ and $v_m\in\mathbb{S}(G)_j$ for $i\neq j$ then the graphs associated with $M$ and $M_\theta$ have the same nontrivial strongly connected components. Equation (\ref{eq1001}) again implies that $\rho(M_\theta)=\rho(M)$ which yields the result.
\end{proof}

A proof of theorem \ref{theorem3} is the following.

\begin{proof}
Consider the time-delayed dynamical network $(\mathcal{H},X^T)$ with stability matrix $\Lambda^T$. For $\tilde{\mathcal{I}}=\mathcal{I}\cup\mathcal{I}_{\mathcal{H}}$ corollary \ref{prop-1} and equation (\ref{eq2.3}) imply
\begin{equation}\label{eq22}
d(\mathcal{N}_{\mathcal{H}}(\textbf{x}|_{\tilde{\mathcal{I}}})_j,\mathcal{N}_{\mathcal{H}}(\textbf{y}|_{\tilde{\mathcal{I}}})_j)\leq \sum_{i\in\tilde{\mathcal{I}}}\Lambda_{ij}d(x_i,y_i)
\end{equation}
for all $j\in\tilde{\mathcal{I}}$ and $\textbf{x},\textbf{y}\in X_{\mathcal{H}}$ as $(\mathcal{N}_{\mathcal{H}},X_{\mathcal{H}})$ has no local dynamics. For $\delta=ij;\ell m\in\mathcal{I}_{\mathcal{H}}$ let $\delta-1=ij;\ell-1,m$. Since
\begin{equation}\label{eq5.5}
\mathcal{H}_{\delta}(x_{\delta-1})=x_{\delta-1} \ \ \text{for all} \ \ \delta\in\mathcal{I}_{\mathcal{H}}
\end{equation}
then without loss in generality we may assume
\begin{equation}\label{eq9}
\Lambda_{j,\delta}=
\begin{cases}
1 & \text{if} \ \ j=\delta-1\\
0 & \text{otherwise}
\end{cases} \ \ \text{for} \ \ j\in\mathcal{I}_\mathcal{H}.
\end{equation}
as these constants are the smallest to satisfy equation (\ref{eq2.3}). We now consider two cases.\\

\emph{Case 1:} Suppose $x_\ell^{k-1}$ is a variable of $\mathcal{H}_m$ for $\ell,m\in\mathcal{I}$. Let $\bar{\mathcal{H}}_m$ be the function $\mathcal{H}_m$ in which the time-delayed variable $x_\ell^{k-1}$ is replaced by $x_\ell^{k-0}$ and define
\begin{equation}\label{eq8}
\bar{\mathcal{H}}=\Big(\bigoplus_{
\begin{smallmatrix}
j\in\mathcal{I},\\
j\neq m
\end{smallmatrix}}\mathcal{H}_j\Big)\oplus(\bar{\mathcal{H}}_m).
\end{equation}
Note that $\big(\mathcal{N}_{\bar{\mathcal{H}}}\big)_m$ is then the function $\big(\mathcal{N}_{\mathcal{H}}\big)_m$ in which the variable $x_{\ell m;11}$ is replaced by $x_\ell$. Let $\mathcal{I}^*=\tilde{\mathcal{I}}-\{\ell m;11\}$. From equation (\ref{eq22}) it follows that
\begin{equation*}
d(\mathcal{N}_{\mathcal{H}}(\textbf{x}|_{\tilde{\mathcal{I}}})_m,\mathcal{N}_{\mathcal{H}} (\textbf{y}|_{\tilde{\mathcal{I}}})_m)\leq \sum_{i\in\mathcal{I^*}-\{\ell\}}\Lambda_{im}d(x_i,y_i)+\Lambda_{\ell m}d(x_\ell,y_\ell)+\Lambda_{\mu,m}d(x_\mu,y_{\mu})
\end{equation*} for all $\textbf{x},\textbf{y}\in X_{\mathcal{H}}$ where $\mu=\ell m;11$.
Hence,
$$d(\mathcal{N}_{\bar{\mathcal{H}}}(\textbf{x}|_{\mathcal{I}^*})_m, \mathcal{N}_{\bar{\mathcal{H}}}(\textbf{y}|_{\mathcal{I}^*})_m)\leq \sum_{i\in\mathcal{I}^*-\{\ell\}}\Lambda_{im}d(x_i,y_i)+\big(
\Lambda_{\ell m}+\Lambda_{\mu,m}\big)d(x_\ell,y_\ell)$$
for all $\textbf{x},\textbf{y}\in X_{\bar{\mathcal{H}}}$. As $\bar{\mathcal{H}}_j=\mathcal{H}_j$ for all $j\neq m$ then the matrix $\tilde{\Lambda}^T$ given by
\begin{equation}\label{eq10}
\tilde{\Lambda}_{ij}=
\begin{cases}
\Lambda_{ij} & (i,j)\neq (\ell,m)\\
\Lambda_{\ell m}+\Lambda_{\mu,m} & (i,j)=(\ell,m)
\end{cases} \ \ \text{for} \ \ i,j\in\mathcal{I}^*
\end{equation}
is a stability matrix of $(\mathcal{N}_{\bar{\mathcal{H}}},X_{\bar{\mathcal{H}}})$.

\begin{figure}
  \begin{center}
    \begin{overpic}[scale=.425]{DDN2.pdf}
    \put(-4,9){$v_\ell$}
    \put(34,9){$v_m$}
    \put(15,0){$\tilde{G}$}
    \put(7,12){$\Lambda_{\ell m}+\Lambda_{\mu,m}$}
    \put(48,9){$v_\ell$}
    \put(101,9){$v_m$}
    \put(74.5,16){$v_0$}
    \put(87,14){$\Lambda_{\mu,m}$}
    \put(64,14){$1$}
    \put(73,7){$\Lambda_{\ell m}$}
    \put(69,0){$\tilde{G}_1=G$}
    \end{overpic}
  \end{center}
  \caption{The edge $e_{\ell m}$ in $\tilde{G}$ and its modified form in $\tilde{G}_1=G$ corresponding to case 1 in the proof of theorem \ref{theorem3}.}\label{fig300}
\end{figure}

By identifying the index $\mu$ with 0, equations (\ref{eq9}) and (\ref{eq10}) imply that the matrix $\tilde{\Lambda}_\theta(\alpha,\mathcal{L})=\Lambda$ for $\theta=1$, $\alpha=\Lambda_{\mu,m}$, and $\mathcal{L}=\Lambda_{\ell m}$. Since $\tilde{\Lambda}$ is nonnegative then corollary \ref{corollary3} implies $\rho(\tilde{\Lambda})<1$ if $\rho(\Lambda)<1$. Hence, $\rho(\tilde{\Lambda}^T)<1$ if $\rho(\Lambda^T)<1$.\\

\emph{Case 2:} For $\tau>1$ and $q,m\in\mathcal{I}$ suppose $x_q^{k-\tau}$ is a variable of $\mathcal{H}_m$ and $x_q^{k-(\tau-1)}$ is not. Let $\bar{\mathcal{H}}_m$ be the function $\mathcal{H}_m$ in which the time-delayed variable $x_q^{k-\tau}$ is replaced by $x_q^{k-(\tau-1)}$. Similar to case 1, let $\bar{\mathcal{H}}$ be defined as in (\ref{eq8}) where the indices of the dynamical network $(\mathcal{N}_{\bar{\mathcal{H}}},X_{\bar{\mathcal{H}}})$ are relabeled such that $qm;i-1,\tau-1$ is now $qm;i\tau$ for all $1<i\leq\tau$.

Let $\mathcal{I}_*=\tilde{\mathcal{I}}-\{qm;1\tau\}$ and the index $qm;2\tau=\nu$. Then
$$(\mathcal{N}_{\bar{\mathcal{H}}})_j=
\begin{cases}
(\mathcal{N}_{\mathcal{H}})_j & j\neq \nu\\
x_{q} & j=\nu
\end{cases} \ \ \text{for} \ \ j\in\mathcal{I}_*
$$
implying $\displaystyle{d(\mathcal{N}_{\bar{\mathcal{H}}}(\textbf{x}|_{\mathcal{I}_*})_j, \mathcal{N}_{\bar{\mathcal{H}}}(\textbf{y}|_{\mathcal{I}_*})_j)=d(x_q,y_q) \ \ \text{for} \ \ j=\nu}$.
This together with equation (\ref{eq22}) implies that the matrix $\tilde{\Lambda}^T$ given by
\begin{equation}\label{eq11}
\tilde{\Lambda}_{ij}=
\begin{cases}
\Lambda_{ij} & (i,j)\neq (q,\nu)\\
1 & (i,j)=(q,\nu)
\end{cases} \ \ \text{for} \ \ i,j\in\mathcal{I}_*
\end{equation}
is a stability matrix of $(\mathcal{N}_{\bar{\mathcal{H}}},X_{\bar{\mathcal{H}}})$.

Let $\eta=qm,\tau\tau$. By identifying the index $\eta$ with 0 and $\eta-1$ with $\ell$, equations (\ref{eq9}) and (\ref{eq11}) imply $\tilde{\Lambda}_\theta(\alpha,\mathcal{L})=\Lambda$ for $\theta=1$, $\alpha=\Lambda_{\eta-1,\eta}$, and $\mathcal{L}=0$. As in case 1, it follows that
$\rho(\tilde{\Lambda}^T)<1$ if $\rho(\Lambda^T)<1$.\\

Observe that the dynamical network $(\mathcal{U}_{\mathcal{H}},X)$ can be obtained by sequentially replacing each timed-delayed variable $x_\ell^{k-\tau}$ of $(\mathcal{H},X^T)$ by $x_\ell^{k-(\tau-1)}$ as in case 1 and case 2. Specifically, this is done by first replacing all variables of the form $x_\ell^{k-1}$ then $x_\ell^{k-2}$ and so on. Therefore, if $\rho(\Lambda^T)<1$ then there exists a stability matrix $\tilde{A}$ of $(\mathcal{U}_\mathcal{H},X)$ with $\rho(\tilde{A})<1$ where  $\tilde{A}$ is a stability matrix of $(\mathcal{U}_\mathcal{H},X)$ considered as a network without local dynamics. Theorem \ref{stability} therefore implies that $(\mathcal{U}_{\mathcal{H}},X)$ has a globally attracting fixed point.
\end{proof}

We note that the proof of theorem \ref{theorem3} gives a step by step procedure for constructing a stability matrix of $(\mathcal{U}_{\mathcal{H}},X_{\mathcal{H}})$ from that of $(\mathcal{H},X^T)$. This process is depicted in figures \ref{fig300} and \ref{fig301} which corresponds to cases 1 and 2 in the proof of theorem \ref{theorem3}. The graphs $\tilde{G}$ and $G$ correspond to the matrices $\tilde{\Lambda}$ and $\Lambda$ respectively.

\begin{figure}
  \begin{center}
    \begin{overpic}[scale=.425]{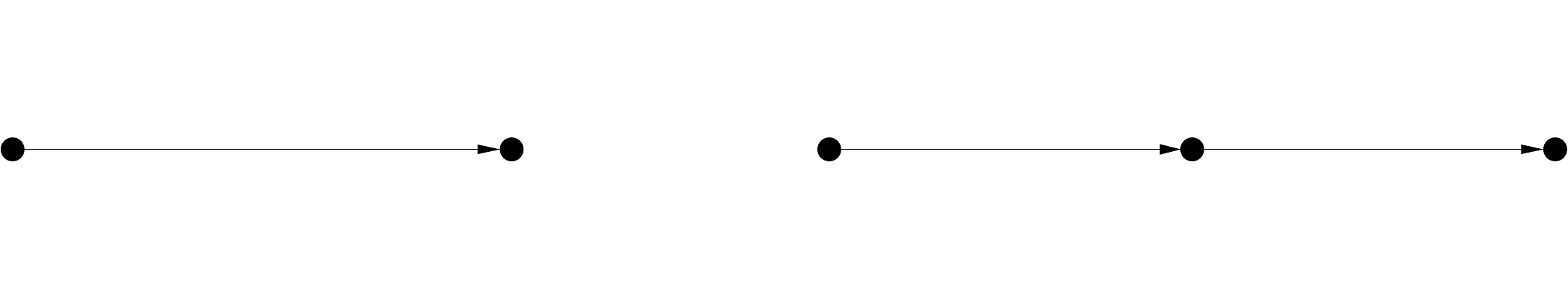}
    \put(-4,9){$v_\ell$}
    \put(34,9){$v_m$}
    \put(15,1.5){$\tilde{G}$}
    \put(11,12){$\Lambda_{\nu-1,\nu}$}
    \put(48,9){$v_\ell$}
    \put(101,9){$v_m$}
    \put(74.5,12){$v_0$}
    \put(83,12){$\Lambda_{\eta-1,\eta}$}
    \put(63,12){$1$}
    \put(69,1.5){$\tilde{G}_1=G$}
    \end{overpic}
  \end{center}
  \caption{The edge $e_{\ell m}$ in $\tilde{G}$ and its modified form in $\tilde{G}_1=G$ corresponding to case 2 in the proof of theorem \ref{theorem3}.}\label{fig301}
\end{figure}

Moreover, with respect to case 1 of theorem \ref{theorem3} we note the following. Suppose $\tilde{\Lambda}^T$ is a stability matrix of $(\mathcal{N}_{\bar{\mathcal{H}}},X_{\bar{\mathcal{H}}})$ and that the constants $$\Lambda_{ij}=\tilde{\Lambda}_{ij} \ \
\text{for} \ \  (i,j)\neq(\ell,m) \ \  \text{where} \ \ i,j\in\mathcal{I}^*.$$ Then it is not necessarily the case that there exist constants $\Lambda_{\ell m}$, $\Lambda_{\mu,m}$ with the property $\Lambda_{\ell m}+\Lambda_{\mu,m}=\tilde{\Lambda}_{\ell m}$ such that $\Lambda^T$ is a stability matrix of $(\mathcal{N}_{\mathcal{H}},X_{\mathcal{H}})$. Therefore, knowing that $\rho(\tilde{\Lambda}^T)<1$ does not give us any information on whether $(\mathcal{N}_{\mathcal{H}},X_{\mathcal{H}})$ has a globally attracting fixed point. This is essentially what is happening in example 3.

However, if the delayed dynamical network $(\mathcal{H},X^T)$ is non-distributed then it is then possible to relate the stability of $(\mathcal{N}_{\bar{\mathcal{H}}},X_{\bar{\mathcal{H}}})$ to the stability of $(\mathcal{N}_{\mathcal{H}},X_{\mathcal{H}})$. This is the main idea in the following proof of theorem \ref{theorem3.45}.

\begin{proof}
Let $(\mathcal{H},X^T)\in nd(X^T)$. Following the proof of theorem \ref{theorem3} we let $\tilde{\mathcal{I}}=\mathcal{I}\cup\mathcal{I}_{\mathcal{H}}$, $\mathcal{I}^*=\tilde{\mathcal{I}}-\{\ell m;11\}$, $\mu=\ell m;11$ and consider two cases.\\

\emph{Case 1:} For $\ell,m\in\mathcal{I}$ suppose $x_\ell^{k-1}$ is a variable of $\mathcal{H}_m$. Then the assumption that $(\mathcal{H},X^T)\in nd(X^T)$ implies $\mathcal{H}_m$ does not depend on $x_\ell^{k-0}$. Let $\bar{\mathcal{H}}_m$ be the function $\mathcal{H}_m$ in which the time-delayed variable $x_\ell^{k-1}$ is replaced by $x_\ell^{k-0}$ and define $\bar{\mathcal{H}}$ as in equation (\ref{eq8}).

Supposing $\tilde{\Lambda}^T$ is a stability matrix of $(\mathcal{N}_{\bar{\mathcal{H}}},X_{\bar{\mathcal{H}}})$ then
\begin{equation}\label{eq22.1}
d(\mathcal{N}_{\bar{\mathcal{H}}}(\textbf{x}|_{\mathcal{I}^*})_m,\mathcal{N}_{\bar{\mathcal{H}}} (\textbf{y}|_{\mathcal{I}^*})_m)\leq \sum_{i\in\mathcal{I}^*-\{\ell\}}\tilde{\Lambda}_{im}d(x_i,y_i)+\tilde{\Lambda}_{\ell m}d(x_\ell,x_m)
\end{equation}
for all $\textbf{x},\textbf{y}\in X_{\bar{\mathcal{H}}}$. Moreover, as $\mathcal{H}_m$ does not depend on $x_\ell^{k-0}$ then any stability matrix $\Lambda^T$ of $(\mathcal{N}_{\mathcal{H}},X_{\mathcal{H}})$ satisfies
\begin{equation}\label{eq22.2}
d(\mathcal{N}_{\mathcal{H}}(\textbf{x}|_{\tilde{\mathcal{I}}})_m,\mathcal{N}_{\mathcal{H}} (\textbf{y}|_{\tilde{\mathcal{I}}})_m)\leq \sum_{i\in\mathcal{I}^*-\{\ell\}}\Lambda_{im}d(x_i,y_i)+\Lambda_{\mu m}d(x_\mu,y_\mu)
\end{equation}
for all $\textbf{x},\textbf{y}\in X_{\mathcal{H}}$ since $\Lambda_{\ell m}$ can be assumed to be zero.

As $\big(\mathcal{N}_{\bar{\mathcal{H}}}\big)_m$ is the function $\big(\mathcal{N}_{\mathcal{H}}\big)_m$ in which the variable $x_{\ell m;11}$ is replaced by $x_\ell$ then equations (\ref{eq22.1}) and (\ref{eq22.2}) imply that the matrix
$$\Lambda_{ij}=\begin{cases}
\tilde{\Lambda}_{ij} &(i,j)\neq(\mu,m), \ (\ell,\mu), \ (\ell,m)\\
\tilde{\Lambda}_{\ell m} &(i,j)=(\mu,m)\\
1 &(i,j)=(\ell,\mu)\\
0 &(i,j)=(\ell,m)
\end{cases} \ \ \text{for} \ \ i,j\in\tilde{\mathcal{I}}$$
is a stability matrix of $(\mathcal{N}_{\mathcal{H}},X_{\mathcal{H}})$. By identifying the index $\mu$ with $0$ then the matrix $\tilde{\Lambda}_\theta(\alpha,\mathcal{L})=\Lambda$ for $\theta=1$, $\alpha=\Lambda_{\mu,m}$ and $\mathcal{L}=0$. As $\tilde{\Lambda}\geq 0$ then corollary \ref{corollary3} implies $\rho(\Lambda)<1$ if $\rho(\tilde{\Lambda})<1$. Hence, $\rho(\Lambda^T)<1$ if $\rho(\tilde{\Lambda}^T)<1$.

\emph{Case 2:} If $x_\ell^{k-\tau}$ is a variable of $\mathcal{H}_m$ for $\tau>1$ then again the assumption that $(\mathcal{H},X^T)$ is non-distributed implies that $\mathcal{H}_m$ does not depend on $x_\ell^{k-(\tau-1)}$. Let $\bar{\mathcal{H}}_m$ be the function $\mathcal{H}_m$ in which the time-delayed variable $x_\ell^{k-\tau}$ is replaced by $x_\ell^{k-(\tau-1)}$ and define $\bar{\mathcal{H}}$ as in equation (\ref{eq8}).

Using reasoning similar to that of the previous case where $\tau=1$ it can be shown that if $\tilde{\Lambda}^T$ is a stability matrix of $(\mathcal{N}_{\bar{\mathcal{H}}},X_{\bar{\mathcal{H}}})$ with $\rho(\tilde{\Lambda}^T)<1$ then there is a stability matrix $\Lambda^T$ of $(\mathcal{N}_{\mathcal{H}},X_{\mathcal{H}})$ such that $\rho(\Lambda^T)<1$.\\

Let $\tilde{A}$ be a stability matrix of $(\mathcal{U}_{\mathcal{H}},X)$. By proposition \ref{prop-2}, $\tilde{A}$ is then a stability matrix of $(\mathcal{U}_{\mathcal{H}},X)$ considered as a dynamical network without local dynamics. Proposition \ref{prop-1} moreover implies that $\tilde{A}=\tilde{\Lambda}^T$ where
$$d(\mathcal{U}_{\mathcal{H}}(\textbf{x}|_{\mathcal{I}})_j,\mathcal{U}_{\mathcal{H}} (\textbf{y}|_{\mathcal{I}})_j)\leq \sum_{i\in\mathcal{I}}\tilde{\Lambda}_{ij}d(x_i,y_i)$$
similar to equation (\ref{eq22.1}).

By sequentially replacing each variable $x_\ell^{k-\tau}$ of $(\mathcal{U}_{\mathcal{H}},X)$ by $x_\ell^{k-(\tau+1)}$ as in cases 1 and 2 it follows that if $\rho(\tilde{\Lambda}^T)<1$ then there exists a stability matrix $A$ of $(\mathcal{H},X^T)$ with the property $\rho(A)<1$. As the converse of this statement holds by theorem \ref{theorem3} this completes the proof.
\end{proof}

\section{Dynamical Network Expansions}

As mentioned in the introduction, a major obstacle in understanding the dynamics of a network (or high dimensional system) is that the information needed to do so is spread throughout the various system components. In the remainder of this paper we consider whether it is possible to transform a dynamical network while maintaining its global stability and consolidating this network information. Our main goal is to gain improve estimates of a network's global stability using our results on time-delayed dynamical networks.

\subsection{Expanding Dynamical Networks}
In this section we consider the method of \emph{dynamical network expansions} introduced in \cite{BW2012}. In section 6 this method is combined with the techniques developed in section 3 to greatly simplify this method and introduce the theory of \emph{dynamical network restrictions}. The results in this section are found in \cite{BW2012}.

Recall from section 2 that for a given dynamical network $(\mathcal{F},X)$ the component
$$\mathcal{F}_j:\bigoplus_{i\in\mathcal{I}_j}X_i\rightarrow X \ \ \text{for} \ \ 1\leq j\leq n.$$
Following \cite{BW2012} we can alternatively write
$$\mathcal{F}_j(\textbf{x}|_{\mathcal{I}_j})=\mathcal{F}_j(x_{j_1},\dots,x_{j_m}), \ \ \text{where} \ \ \mathcal{I}_j=\{j_1,\dots,j_m\}.$$
Note that if the variable $x_{j_i}$ of $\mathcal{F}_j$ is replaced by the function $\mathcal{G}(y_1,\dots,y_k)$ the result is the function
$$\mathcal{F}_j(x_{j_1},\dots,x_{j_{i-1}},\mathcal{G}(y_1,\dots,y_k),x_{j_{i+1}},\dots,x_{j_m})$$
having variables $\{x_{j_1},\dots,x_{j_{i-1}},y_1,\dots,y_k,x_{j_{i+1}},\dots,x_{j_m}\}$. Additionally, if the sequence $\underline{\gamma}=\ell_1,\dots,\ell_N$ let
$$\mathcal{F}_{j;\underline{\gamma}}(x_{j_1},\dots,x_{j_m})=\mathcal{F}_j(x_{j_1,\underline{\gamma}},\dots,x_{j_m,\underline{\gamma}}).$$
That is, the variables of the function $\mathcal{F}_{j;\underline{\gamma}}$ are indexed by the sequences $j_i;\ell_1,\dots,\ell_N$ for $1\leq i\leq m$.

\begin{definition}
For the graph $\Gamma_\mathcal{F}=(V,E)$, let $S\subseteq V$ such that each cycle of $\Gamma_\mathcal{F}$ contains a vertex in $S$.  If each vertex of $V$ belongs to a path or cycle that begins and ends with a vertex of $S$ then $S$ is called a \emph{complete structural set} of $\Gamma_\mathcal{F}$.
\end{definition}

For the graph $\Gamma_\mathcal{F}$ let $st_0(\Gamma_\mathcal{F})$ be the set of all complete structural sets of $\Gamma_\mathcal{F}$. For $S=\{v_1,\dots,v_m\}\in st_0(\Gamma_\mathcal{F})$ let $\mathcal{I}_S=\{1,\dots,m\}$ denote the \emph{index set} of $S$.

\begin{definition}
For $S\in st_0(\Gamma_\mathcal{F})$ and $i,j\in\mathcal{I}_S$ let $\mathcal{B}_{ij}(\Gamma_\mathcal{F};S)$ be the set of paths and cycles of $\Gamma_\mathcal{F}$ from $v_i$ to $v_j$ that have no interior vertices in $S$. The set $$\mathcal{B}_S(\Gamma_{\mathcal{F}})=\bigcup_{i,j\in\mathcal{I}_S}\mathcal{B}_{ij}(\Gamma_\mathcal{F};S)$$ is called the \emph{branch set} of $\Gamma_{\mathcal{F}}$ with respect to $S$.
\end{definition}

This notion of a branch set allows us to introduce the following.

\begin{definition}\label{admissable}
For $S\in st_0(\Gamma_{\mathcal{F}})$ let the set
\begin{equation}\label{eq.add}
\mathcal{A}_S(\mathcal{F})=\{\ell_1,\dots,\ell_N:v_{\ell_1},\dots,v_{\ell_N}\in\mathcal{B}_S(\Gamma_\mathcal{F}), \ N>2\}
\end{equation}
be the set of \textit{admissible sequences (paths)} of $\mathcal{F}$ with respect to $S$.
\end{definition}

Let $(\mathcal{F},X)$ be a dynamical network with graph of interactions $\Gamma_\mathcal{F}=(V,E)$ and suppose $S\in st_0(\Gamma_\mathcal{F})$. For $j\in\mathcal{I}_S$ let $\mathcal{F}_{\left<j,1\right>}$ be the function
$$\mathcal{F}_j=\mathcal{F}_j(x_{j_i},\dots,x_{j_m})$$
in which each variable $x_{j_\ell}$ is replaced by $x_{j_\ell,j}$ if $j_\ell\notin\mathcal{I}_S$.

For $i>1$ let $\mathcal{F}_{\left<j,i\right>}$ be the function
$$\mathcal{F}_{\left<j,i-1\right>}=\mathcal{F}_{\left<j,i-1\right>}(x_{\underline{\gamma}_1},\dots,x_{\underline{\gamma}_t})$$
in which each $x_{\underline{\gamma}_\ell}=x_{\ell_1,\dots,\ell_N}$ is replaced by the function $\mathcal{F}_{\ell_1;\underline{\gamma}_\ell}$ if $\ell_1\notin\mathcal{I}_S$. If $\ell_1\in\mathcal{I}_S$ for each $1\leq\ell\leq t$ then define $(\mathcal{X}_S\mathcal{F})_j=\mathcal{F}_{\left<j,{i-1}\right>}$.

Let $\underline{\gamma}=\ell_1,\dots,\ell_N\in\mathcal{A}_S(\mathcal{F})$. For $1<i<|\underline{\gamma}|=N$ define the $N-2$ spaces
$$X_{i;\underline{\gamma}}=X_{\ell_1}.$$
Additionally, define the functions
\begin{equation}\label{eq6}
\mathcal{X}_S\mathcal{F}_{i;\underline{\gamma}}(x_{i-1,\underline{\gamma}})=x_{i-1,\underline{\gamma}}.
\end{equation}
By way of notation we let
\begin{equation}\label{eq2.4}
X_{N-1,\underline{\gamma}}=X_{\underline{\gamma}}, \  \mathcal{X}_S\mathcal{F}_{N-1,\underline{\gamma}}=\mathcal{X}_S\mathcal{F}_{\underline{\gamma}}, \ \text{and} \ x_{1,\underline{\gamma}}=x_{\ell_1}.
\end{equation}

\begin{definition}\label{expansion}
Suppose $S\in st_0(\Gamma_\mathcal{F})$. Let
$$
\mathcal{X}_S\mathcal{F}=\Big(\bigoplus_{j\in\mathcal{I}_S}\mathcal{X}_S\mathcal{F}_j\Big) \oplus\Big(\bigoplus_{
\begin{smallmatrix}
\underline{\gamma}\in\mathcal{A}_S(\mathcal{F})\\
1<i<|\underline{\gamma}|
\end{smallmatrix}} \mathcal{X}_S\mathcal{F}_{i;\underline{\gamma}}\Big).$$
and
$$X_S=\Big(\bigoplus_{j\in\mathcal{I}_S}X_j\Big)\oplus\Big(\bigoplus_{
\begin{smallmatrix}
\underline{\gamma}\in\mathcal{A}_S(\mathcal{F})\\
1<i<|\underline{\gamma}|
\end{smallmatrix}}X_{i;\underline{\gamma}}\Big).$$
The dynamical network $(\mathcal{X}_S\mathcal{F},X_S)$ is called the \emph{expansion} of $(\mathcal{F},X)$ with respect to $S$.
\end{definition}

\begin{theorem}\label{gafp}
The dynamical network $(\mathcal{F},X)$ has a globally attracting fixed point if and only if the expansion also has $({\mathcal{X}_S\mathcal{F}},X_S)$ a globally attracting fixed point.
\end{theorem}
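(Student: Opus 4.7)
The strategy I would pursue mirrors the argument used for Theorem \ref{next}: exhibit a conjugacy between $(\mathcal{X}_S \mathcal{F}, X_S)$ and a history-augmented version of $(\mathcal{F}, X)$, and then transfer global attraction across this conjugacy.

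First I would unpack how the auxiliary coordinates evolve. Along each admissible sequence $\underline{\gamma} = \ell_1, \ldots, \ell_N \in \mathcal{A}_S(\mathcal{F})$, equation \eqref{eq6} says $\mathcal{X}_S \mathcal{F}_{i;\underline{\gamma}}(x_{i-1,\underline{\gamma}}) = x_{i-1,\underline{\gamma}}$, so one iteration simply shifts values along the chain. Coupled with the convention $x_{1,\underline{\gamma}} = x_{\ell_1}$, this gives $x^k_{i,\underline{\gamma}} = x^{k-(i-1)}_{\ell_1}$ for all $k \geq i-1$. Thus the auxiliary coordinates play precisely the role of the delay-tracking coordinates constructed before Theorem \ref{next}: they store time-delayed copies of the $\mathcal{I}_S$-coordinates.

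Next I would verify that for $j \in \mathcal{I}_S$ the iterated substitution defining $(\mathcal{X}_S \mathcal{F})_j$ correctly reproduces the way $\mathcal{F}$ cascades through the non-$S$ vertices. Because $S$ is a complete structural set, every dependency chain issuing from $v_j$ closes back in $S$ only after passing through an admissible sequence, and the recursive replacement in Definition \ref{expansion} unrolls $\mathcal{F}_j$ exactly along those sequences. When the auxiliary coordinates hold their synchronized delayed values, the update $x_j^{k+1} = (\mathcal{X}_S \mathcal{F})_j(\mathbf{y}^k)$ therefore agrees with the composition of one-step $\mathcal{F}$-applications that would produce $x_j^{k+1}$ from the relevant past $S$-states under $\mathcal{F}$.

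With this in hand, let $T$ exceed the length of the longest admissible sequence and let $\underline{\mathcal{F}}: X^T \to X^T$ be the history-shift map defined as in the proof of Theorem \ref{next}. I would define an injection $\pi: X^T \to X_S$ by $\pi(\textbf{y})_j = x_j^0$ for $j \in \mathcal{I}_S$ and $\pi(\textbf{y})_{i;\underline{\gamma}} = x_{\ell_1}^{-(i-1)}$, and verify the intertwining $\pi \circ \underline{\mathcal{F}} = \mathcal{X}_S \mathcal{F} \circ \pi$; compatibility on the primary coordinates is exactly the content of the previous paragraph, while on the auxiliary coordinates both sides implement the delay shift. Moreover, every orbit in $X_S$ enters the image of $\pi$ after at most $T$ iterations, since the identity chains overwrite arbitrary initial auxiliary data with the genuine delayed copies of $\mathcal{I}_S$-coordinates. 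Both directions of the theorem then follow exactly as in Theorem \ref{next}: a globally attracting fixed point of $(\mathcal{F},X)$ lifts to one in $X_S$ that attracts everything eventually entering the image of $\pi$, and conversely the $\mathcal{I}_S$-projection of an attracting fixed point in $X_S$, together with the branch-propagated values, produces a globally attracting fixed point of $(\mathcal{F},X)$.

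The main obstacle I anticipate is the bookkeeping needed to verify the intertwining relation: aligning the nested substitutions of Definition \ref{expansion} with the cascade of one-step $\mathcal{F}$-applications through the non-$S$ portion of $\Gamma_\mathcal{F}$ is combinatorially delicate, and one must carefully check that the number of delay steps stored along each admissible sequence matches the number of nested substitutions needed to reach $\mathcal{I}_S$.
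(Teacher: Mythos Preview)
Your overall strategy---recognizing the auxiliary coordinates as delay chains and invoking a semi-conjugacy in the style of Theorem~\ref{next}---is sound in spirit, and indeed Section~6.2 of the paper later exploits exactly this identification (see Lemma~\ref{lemma6}). The paper itself does not give a self-contained proof of Theorem~\ref{gafp}; it cites \cite{BW2012} for one direction and asserts the converse ``by reversing the argument''. So your proposal is attempting more than the paper does here.

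That said, there is a genuine gap. Your map $\pi:X^T\to X_S$ cannot be an injection: $X^T$ carries the full history of all $n$ coordinates, while $X_S$ records only the $\mathcal{I}_S$-coordinates together with delayed copies of those same $\mathcal{I}_S$-coordinates (since each admissible sequence begins in $S$). The non-$S$ coordinates are simply discarded by $\pi$. Worse, the intertwining $\pi\circ\underline{\mathcal{F}}=\mathcal{X}_S\mathcal{F}\circ\pi$ fails on general elements of $X^T$: for $j\in\mathcal{I}_S$ the left side produces $\mathcal{F}_j$ applied to the \emph{current} non-$S$ neighbours $x^0_{j_\ell}$, whereas the right side produces $\mathcal{F}_j$ applied to values obtained by substituting $\mathcal{F}_{j_\ell}$ evaluated at \emph{delayed} $S$-coordinates. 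These agree only when the history in $X^T$ is an actual orbit segment of $\mathcal{F}$, so that $x^0_{j_\ell}=\mathcal{F}_{j_\ell}(\mathbf{x}^{-1})$, etc.

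The correct conjugacy is not with $(\underline{\mathcal{F}},X^T)$ but with a time-delayed system on $X|_S$---this is precisely the system $(\mathcal{D}_S\mathcal{F},X|_S^T)$ the paper constructs before Lemma~\ref{lemma6}. After establishing that $(\mathcal{X}_S\mathcal{F},X_S)$ and $(\mathcal{D}_S\mathcal{F},X|_S^T)$ have equivalent stability (your delay-chain argument does this part), a separate step is still required: one must show that global attraction of the $S$-coordinates under the delayed dynamics forces global attraction of the \emph{non-}$S$ coordinates under $\mathcal{F}$. This uses the structural-set hypothesis that $\Gamma_\mathcal{F}$ restricted to $V\setminus S$ is acyclic, so the non-$S$ coordinates are slaved to the $S$-coordinates after a bounded transient. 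Your phrase ``branch-propagated values'' gestures at this, but it is a substantive argument that your proposal does not supply.
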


In \cite{BW2012} it is shown that if the expansion $({\mathcal{X}_S\mathcal{F}},X_S)$ has a globally attracting fixed point then the same holds for $(\mathcal{F},X)$. The converse holds by simply reversing the argument and so is omitted.

As there is no natural decomposition of the expansion $(\mathcal{X}_S\mathcal{F},X_S)$ into a set of local systems and an interaction we consider $(\mathcal{X}_S\mathcal{F},X_S)$ as a network with no local dynamics. By proposition \ref{prop-1} a stability matrix of $(\mathcal{X}_S\mathcal{F},X_S)$ therefore has the form $\Lambda^T$ where the constants $\Lambda_{ij}$ satisfy equation (\ref{eq2.3}) for $F=\mathcal{X}_S\mathcal{F}$. The following is an immediate corollary of theorem \ref{gafp} and theorem \ref{stability}.

\begin{corollary}\label{cor30}
Suppose $\Lambda^T$ is a stability matrix of the expansion $({\mathcal{X}_S\mathcal{F}},X_S)$. If $\rho(\Lambda^T)<1$ then $(\mathcal{F},X)$ has a globally attracting fixed point.
\end{corollary}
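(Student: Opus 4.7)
The plan is to simply chain together Theorem \ref{stability} and Theorem \ref{gafp}, since the setup preceding the corollary has already arranged for both to apply. First I would invoke the observation made in the paragraph immediately before the corollary: because the expansion $(\mathcal{X}_S\mathcal{F}, X_S)$ has no natural splitting into local dynamics and an interaction, it is treated as a dynamical network without local dynamics, so by Proposition \ref{prop-1} a stability matrix indeed takes the form $\Lambda^T$ with the constants $\Lambda_{ij}$ satisfying the Lipschitz bound (\ref{eq2.3}) for $F = \mathcal{X}_S\mathcal{F}$. This means the hypothesis of Theorem \ref{stability} is legitimately available to the expansion.

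Next, applying Theorem \ref{stability} directly to $(\mathcal{X}_S\mathcal{F}, X_S)$ with its stability matrix $\Lambda^T$ and the assumption $\rho(\Lambda^T)<1$ produces a globally attracting fixed point for the expansion. Then Theorem \ref{gafp} (the "if" direction, which is the content proved in \cite{BW2012}) transfers this conclusion back: global stability of the expansion $(\mathcal{X}_S\mathcal{F}, X_S)$ implies global stability of $(\mathcal{F}, X)$.

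Honestly, there is no real obstacle here — the corollary is a two-line bookkeeping deduction, and the authors themselves flag it as immediate. The only thing to be careful about is to explicitly cite the "network without local dynamics" convention so that the reader does not wonder whether Theorem \ref{stability}, which is stated for general $(\mathcal{F}, X)$, really applies to an object like $\mathcal{X}_S\mathcal{F}$ that has no obvious factorization $F\circ\varphi$. Once that pointer is in place, the chain $\rho(\Lambda^T)<1 \Rightarrow$ (expansion globally stable) $\Rightarrow$ $(\mathcal{F},X)$ globally stable is automatic.
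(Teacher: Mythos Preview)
Your proposal is correct and matches the paper's own treatment: the authors state that the corollary is immediate from Theorem \ref{gafp} and Theorem \ref{stability}, which is exactly the chain you describe. Your added remark about invoking the ``network without local dynamics'' convention via Proposition \ref{prop-1} is the appropriate justification for applying Theorem \ref{stability} to the expansion, and indeed the paper makes this same observation in the paragraph immediately preceding the corollary.
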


As has been shown, expansions always allow for better estimates (or at least no worse) of a dynamical network's global stability. This fact is a combination of theorem \ref{gafp} and the following.

\begin{theorem}\label{last}
Let $({\mathcal{X}_S\mathcal{F}},X_S)$ be an expansion of $(\mathcal{F},X)$ and suppose $\Lambda^T$ is a stability matrix of $(\mathcal{F},X)$. Then there is a stability matrix $\tilde{\Lambda}^T$ of $({\mathcal{X}_S\mathcal{F}},X_S)$ such that $\rho(\tilde{\Lambda}^T)\leq\rho(\Lambda^T)$.
\end{theorem}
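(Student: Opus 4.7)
The plan is to exhibit a specific stability matrix $\tilde{\Lambda}^T$ of $(\mathcal{X}_S\mathcal{F}, X_S)$ built directly from $\Lambda$ and then compare the characteristic polynomials of $\tilde{\Lambda}^T$ and $\Lambda^T$ via two parallel Schur-complement identities. The goal is to show $\det(\lambda I - \tilde{\Lambda}^T) = \lambda^{K}\det(\lambda I - \Lambda^T)$ for some integer $K\geq 0$, from which $\sigma(\tilde{\Lambda}^T)$ equals $\sigma(\Lambda^T)$ together with $K$ additional zeros, and hence $\rho(\tilde{\Lambda}^T) = \rho(\Lambda^T)$.

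Since $(\mathcal{X}_S\mathcal{F}, X_S)$ is treated as a network without local dynamics, by Proposition~\ref{prop-1} any collection of Lipschitz constants satisfying (\ref{eq2.3}) for $\mathcal{X}_S\mathcal{F}$ yields a stability matrix. I would set $\tilde{\Lambda}_{(i-1;\underline{\gamma}),(i;\underline{\gamma})} = 1$ for each identity auxiliary map in (\ref{eq6}); $\tilde{\Lambda}_{i,j} = \Lambda_{i,j}$ whenever $v_i\to v_j$ is a direct edge with $i,j\in\mathcal{I}_S$ (such an edge triggers no substitution); and $\tilde{\Lambda}_{\underline{\gamma},j} = \prod_{k=1}^{N-1}\Lambda_{\ell_k,\ell_{k+1}}$ for each admissible path $\underline{\gamma}=\ell_1,\ldots,\ell_N$ ending at $j\in \mathcal{I}_S$. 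The last value is justified by the composition rule for Lipschitz constants, since $\mathcal{X}_S\mathcal{F}_j$ along the $\underline{\gamma}$-branch equals $\mathcal{F}_{\ell_N}\circ\mathcal{F}_{\ell_{N-1}}\circ\cdots\circ\mathcal{F}_{\ell_2}$ as a function of $x_{\underline{\gamma}}$. Ordering the coordinates of $X_S$ so that $\mathcal{I}_S$ precedes the auxiliary indices puts $\tilde{\Lambda}^T$ in block form
\[
\tilde{\Lambda}^T = \begin{pmatrix} \Lambda^T|_{SS} & B \\ C & D \end{pmatrix},
\]
where $D$ is block diagonal over admissible paths with each block strictly lower-triangular; in particular $D$ is nilpotent and $\det(D - \lambda I) = (-\lambda)^{|A|}$, where $|A|$ is the total number of auxiliary coordinates.

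For $\lambda \neq 0$, the Schur-complement formula gives $\det(\tilde{\Lambda}^T - \lambda I) = (-\lambda)^{|A|}\det\!\bigl(\Lambda^T|_{SS} - \lambda I - B(D-\lambda I)^{-1}C\bigr)$. Expanding $(D-\lambda I)^{-1}$ as a finite geometric series and tracking, path by path, how each column of $C$ connects through the identity chain of $\underline{\gamma}$ to each row of $B$, the bracket reduces to $\Lambda^T|_{SS} - \lambda I + \Lambda^T|_{SN}(\lambda I - \Lambda^T|_{NN})^{-1}\Lambda^T|_{NS}$, with $N = \mathcal{I}\setminus\mathcal{I}_S$; the inverse is a finite polynomial in $1/\lambda$ because $S$ is a complete structural set, so every cycle of $\Gamma_\mathcal{F}$ contains a vertex of $S$ and $\Lambda^T|_{NN}$ is nilpotent with $\det(\lambda I - \Lambda^T|_{NN}) = \lambda^{|N|}$. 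A second Schur-complement applied directly to $\lambda I - \Lambda^T$ gives
\[
\det(\lambda I - \Lambda^T) = \lambda^{|N|}\det\!\bigl(\lambda I - \Lambda^T|_{SS} - \Lambda^T|_{SN}(\lambda I - \Lambda^T|_{NN})^{-1}\Lambda^T|_{NS}\bigr),
\]
and chaining the two identities yields $\det(\lambda I - \tilde{\Lambda}^T) = \lambda^{|A|-|N|}\det(\lambda I - \Lambda^T)$, which delivers $\rho(\tilde{\Lambda}^T) = \rho(\Lambda^T)$. The main technical hurdle lies in the first reduction: one must verify that $B(D-\lambda I)^{-1}C$ expands term by term into the same sum over admissible paths as $\Lambda^T|_{SN}(\lambda I - \Lambda^T|_{NN})^{-1}\Lambda^T|_{NS}$, pairing each $\underline{\gamma}$ of length $N$ with its identity-chain block of size $N-2$ so that the weight $\prod\Lambda_{\ell_k,\ell_{k+1}}$ appears at the correct power $\lambda^{-(N-2)}$; once this matching is in place the rest of the argument is mechanical.
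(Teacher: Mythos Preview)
The paper does not give its own proof of this theorem; it is quoted from \cite{BW2012} along with the other results of Section~5 (``The results in this section are found in \cite{BW2012}''). So there is no in-paper argument to compare against directly.

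Your approach is sound and in fact establishes the sharper conclusion $\rho(\tilde{\Lambda}^T)=\rho(\Lambda^T)$. The two ingredients you rely on are both correct: (i) the product $\prod_{k=1}^{N-1}\Lambda_{\ell_k,\ell_{k+1}}$ is a valid Lipschitz constant for $\mathcal{X}_S\mathcal{F}_j$ in the variable $x_{\underline{\gamma}}$ in the sense of (\ref{eq2.3}) --- this follows by iterating (\ref{eq2.3}) through the nested substitutions, since the variables of $\mathcal{X}_S\mathcal{F}_j$ are indexed by distinct paths and hence decouple; and (ii) the two Schur complements coincide because $S$ is a complete structural set, so $\Lambda|_{NN}$ is nilpotent and the geometric series $(\lambda I-\Lambda|_{NN})^{-1}$ enumerates precisely the paths of $\mathcal{B}_S(\Gamma_{\mathcal{F}})$ with interior in $\bar S$, each weighted by $\lambda^{-(N-2)}\prod\Lambda_{\ell_k,\ell_{k+1}}$, while on the expansion side each such path contributes the same term through its length-$(N-2)$ identity chain. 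The count $|A|\ge |N|$ (needed so that $\lambda^{|A|-|N|}$ is a polynomial factor) holds because every vertex outside $S$ lies on at least one admissible path. This matching of Schur complements is exactly the ``isospectral graph reduction'' mechanism underlying \cite{BW2012}, so your argument is very much in the spirit of the cited reference, although the paper itself does not reproduce it.
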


\begin{figure}
  \begin{center}
    \begin{overpic}[scale=.5]{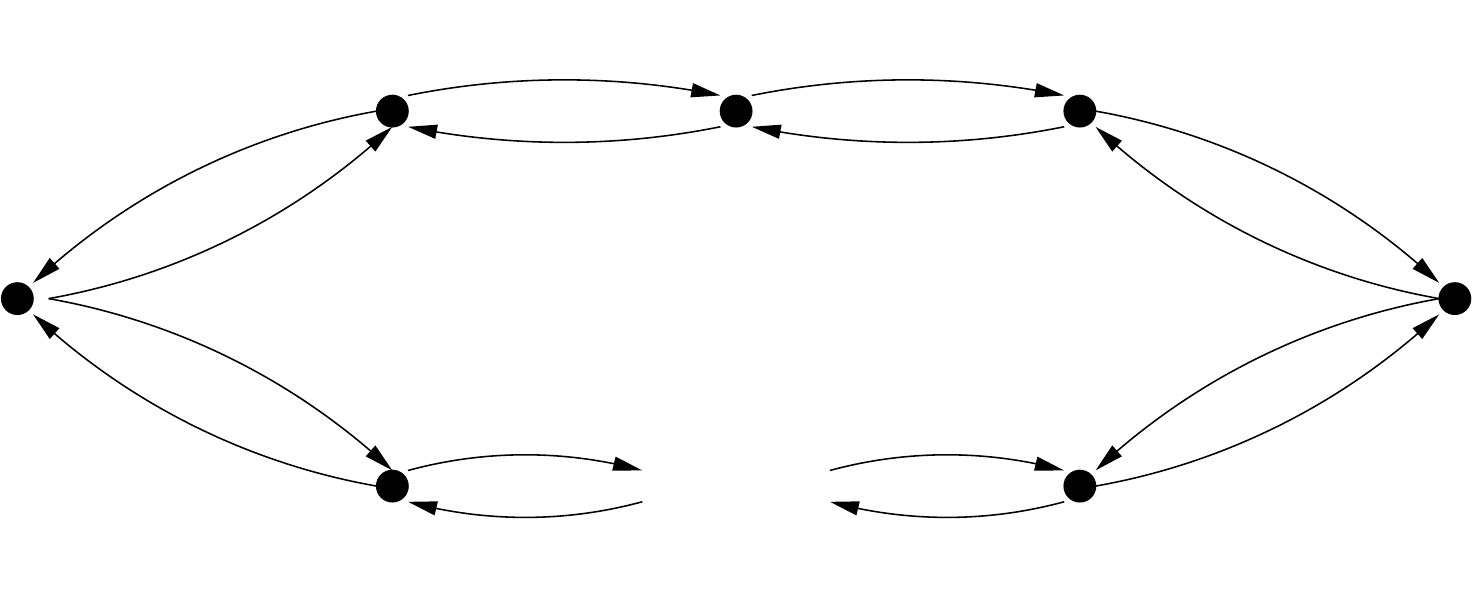}
    \put(49,-2){$\Gamma_F$}
    \put(48,37){$v_{2n}$}
    \put(25,37){$v_1$}
    \put(-7,19){$v_2$}
    \put(25,1){$v_3$}
    \put(70,1){$v_{2n-3}$}
    \put(102,19){$v_{2n-2}$}
    \put(70,37){$v_{2n-1}$}
    \put(47.5,7){$\dots$}
    \end{overpic}
  \end{center}
  \caption{The graph of interactions $\Gamma_\mathcal{F}$ for the dynamical network in example 5}\label{fig4}
\end{figure}

\begin{example}
Consider the dynamical network $(\mathcal{F},X)$ given by
$$\mathcal{F}_j(\mathbf{x})=\tanh(x_{j-1})+\tanh(x_{j+1})+c$$
with local systems $\varphi_i(x_i)=\tanh(x_i)$ and interaction
$$F_j(x_{j-1},x_{j+1})=x_{j-1}+x_{j+1}+c$$
for $1\leq i,j\leq 2n$ where the indices are taken mod $2n$, $X_i=\mathbb{R}$, and $c\geq0$. Note that this network is a variant of the Cohen-Grossberg model introduced in section 2.4 with $\mathcal{L}=1$, $c_j=c$,
$$W_{ij}=\begin{cases}
1 & j=i\pm 1\\
0 &\text{otherwise}
\end{cases}$$
and where we allow $\epsilon=1$. As $\mathcal{F}\in C_\infty^1(\mathbb{R}^{2n})$ then for $\Lambda_{ij}=\max_{\textbf{x}\in X}|(DF)_{ji}(\textbf{x})|$ and $L_i=\max_{\textbf{x}\in X}|\varphi_i^\prime(x_i)|$ the matrix $A=\Lambda^T\cdot diag[L_1,\dots,L_{2n}]$ given by
$$A=\left[
\begin{array}{ccccc}
0 & 1 & & & 1\\
1 & 0 & 1 & & \\
& \ddots & \ddots & \ddots &\\
& & 1 & 0 & 1\\
1 & & & 1 & 0\\
\end{array}
\right]$$
is a stability matrix of $(\mathcal{F},X)$. Since $A$ has the constant row sums $2$ then $\rho(\mathcal{F})=2$. Therefore, theorem \ref{stability} cannot directly be used to imply that $(\mathcal{F},X)$ has a globally attracting fixed point.

However, note that the vertex set $S=\{v_2,v_4,\dots,v_{2n}\}$ has the property that each cycle of $\Gamma_\mathcal{F}$ contains a vertex of $S$ (see figure \ref{fig4}). Moreover, as
\begin{equation}\label{branch}
\mathcal{B}_S(\Gamma_\mathcal{F})=\{v_i,v_j,v_k: i\in\mathcal{I}_S, \ j=i\pm 1, k=j\pm 1\}
\end{equation}
where each index is taken$\mod 2n$ then each vertex of $\Gamma_\mathcal{F}$ belongs to a path or cycle of $\mathcal{B}_S(\Gamma_\mathcal{F})$. Hence, $S$ is a complete structural set of $\Gamma_\mathcal{F}$.

Note that if $j \in\mathcal{I}_S$ then $j\pm 1\notin\mathcal{I}_S$. Replacing the variables $x_{j-1}$ and $x_{j+1}$ of $\mathcal{F}_j$ by $x_{j-1,j}$ and $x_{j+1,j}$ respectively results in the function
$$\mathcal{F}_{\left<j,1\right>}=\mathcal{F}_j\big(x_{j-1,j},x_{j+1,j}\big).$$

Given that the variables $x_{j-1,j}$ and $x_{j+1,j}$ of $\mathcal{F}_{\left<i,1\right>}$ are indexed by sequences beginning with elements not in $\mathcal{I}_S$ then they are replaced by $\mathcal{F}_{j-1;j-1,j}$ and $\mathcal{F}_{j+1;j+1,j}$ respectively to form $\mathcal{F}_{\left<j,2\right>}$. That is,
$$\mathcal{F}_{\left<j,2\right>}=\mathcal{F}_{j}\big(\mathcal{F}_{j-1}(x_{j-2,j-1,j},x_{j,j-1,j}), \mathcal{F}_{j+1}(x_{j,j+1,j},x_{j+2,j+1,j})\big).$$
As $j\in\mathcal{I}_S$ implies $j\pm2\in\mathcal{I}_S$ then each variable of $\mathcal{F}_{\left<j,2\right>}$ is indexed by a sequence beginning with an element of $\mathcal{I}_S$. Hence, $\mathcal{X}_S\mathcal{F}_j=\mathcal{F}_{\left<j,2\right>}$ for $j\in\mathcal{I}_S$. Moreover, it follows from (\ref{branch}) that
$$\mathcal{A}_S(\mathcal{F})=\{i,j,k:i\in\mathcal{I}_S,j=i\pm1,k=j\pm1\}$$
where each index is taken mod $2n$ and $n\geq 2$. For each $i,j,k=\underline{\gamma}\in\mathcal{A}_S(\mathcal{F})$ there is then a single function corresponding to $\underline{\gamma}$ where
$$\mathcal{X}_S\mathcal{F}_{2,\underline{\gamma}}:X_{1;\underline{\gamma}}\rightarrow X_{2;\underline{\gamma}} \ \ \text{given by} \ \ \mathcal{X}_S\mathcal{F}_{2,\underline{\gamma}}(x_{2;\underline{\gamma}})=x_{2;\underline{\gamma}}.$$ By use of (\ref{eq2.4}), this function can be written as $\mathcal{X}_S\mathcal{F}_{\underline{\gamma}}(x_i)=x_i$.

Following definition \ref{expansion} the expansion $\mathcal{X}_S\mathcal{F}:X_S\rightarrow X_S$ is given by

\begin{equation}\label{eq77}
\mathcal{X}_S\mathcal{F}(\mathbf{x})=
\left[
\begin{array}{c}
\mathcal{X}_{S}\mathcal{F}_{2}\\
\vdots\\
\mathcal{X}_{S}\mathcal{F}_{2n}
\end{array}
\right]
\bigoplus
\left[
\begin{array}{c}
\mathcal{X}_S\mathcal{F}_{2n,1,2}\\
\vdots\\
\mathcal{X}_S\mathcal{F}_{2,1,2n}
\end{array}
\right] \ \ \text{where}
\end{equation}

$$\mathcal{X}_S\mathcal{F}_{k,l,m}(x_k)=x_k \ \ \text{for} \ \ k,l,m\in\mathcal{A}_S(\mathcal{F}) \ \ \text{and}$$
\begin{align*}
\mathcal{X}_{S}\mathcal{F}_j=&\tanh\big[\tanh(x_{j-2,j-1,j})+\tanh(x_{j,j-1,j})+c\big]+\\ &\tanh\big[\tanh(x_{j,j+1,j})+\tanh(x_{j+2,j+1,j})+c\big]+c
\end{align*}
for $j\in\{2,4,\dots, 2n\}$. Moreover, the space
$$X_S=\Big(\bigoplus_{j=1}^n X_{2j}\Big)\oplus\Big(\bigoplus_{\underline{\gamma}\in\mathcal{A}_S(\mathcal{F})} X_{\underline{\gamma}}\Big)=\mathbb{R}^{5n}.$$ The graph of interactions $\Gamma_{\mathcal{X}_S\mathcal{F}}$ is shown in figure \ref{fig2001}.

\begin{figure}
  \begin{center}
    \begin{overpic}[height=1.5in,width=4in]{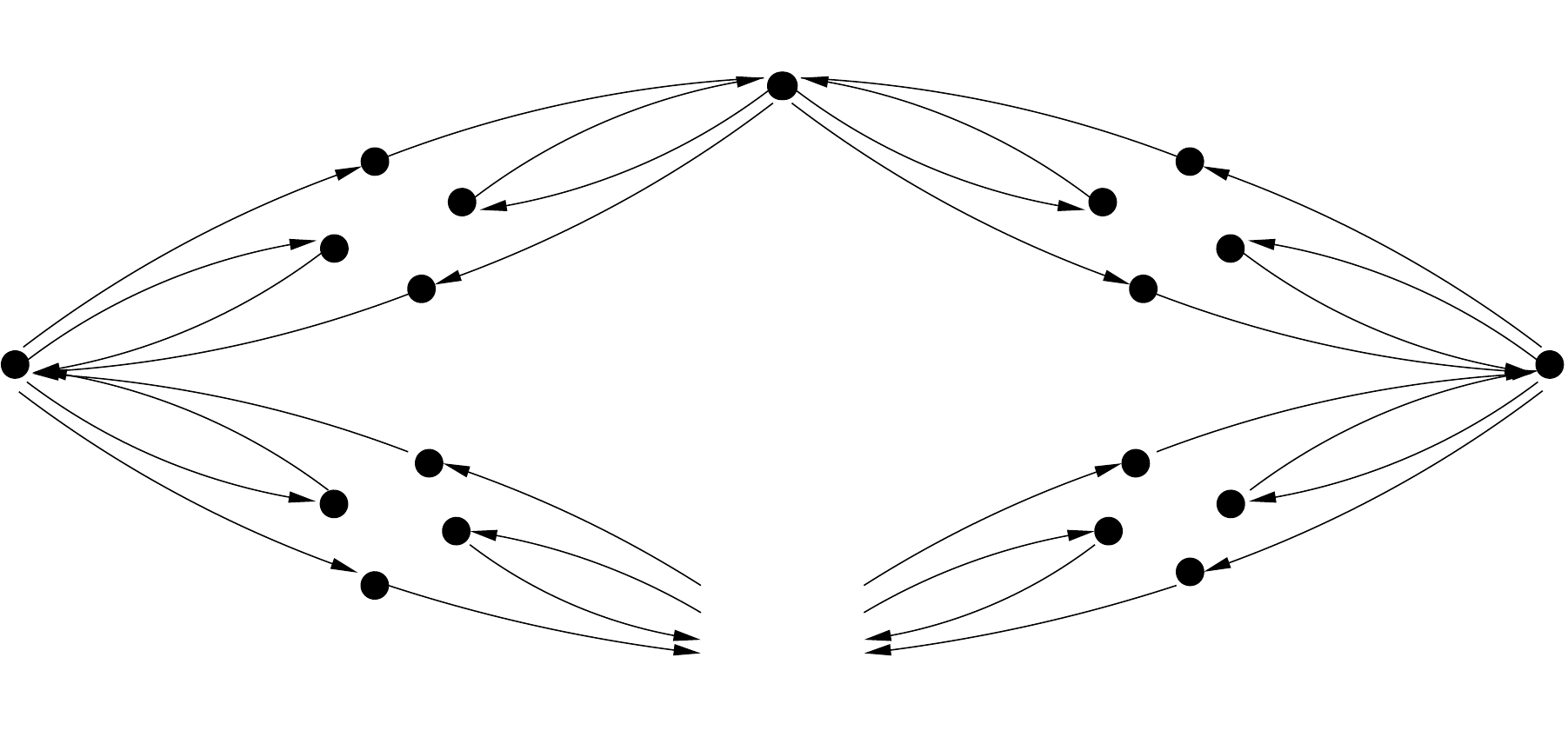}
    \put(46,-2){$\Gamma_{\mathcal{X}_SF}$}
    \put(48,35.5){$v_{m}$}

    \put(17,32){$v_{2;21m}$}
    \put(8,28){$v_{2;212}$}
    \put(23,19.5){$v_{2;m12}$}
    \put(34,23){$v_{2;m1m}$}

    \put(-5,18){$v_2$}

    \put(23,16.5){$v_{2;432}$}
    \put(8,8){$v_{2;232}$}
    \put(23,4){$v_{2;234}$}
    \put(34,13){$v_{2;434}$}

    \put(102,18){$v_{k}$}

    \put(67,16.5){$v_{2;ijk}$}
    \put(56,13){$v_{2;iji}$}
    \put(73,4){$v_{2;kji}$}
    \put(84,8){$v_{2;kjk}$}

    \put(73,32){$v_{2;klm}$}
    \put(84,28){$v_{2;klk}$}
    \put(67,19.5){$v_{2;mlk}$}
    \put(56,23){$v_{2;mlm}$}

    \put(48,6){$\dots$}
    \end{overpic}
  \end{center}
  \caption{The graph of the expansion $(\mathcal{X}_S\mathcal{F},X_S)$ in example 5 where $i=2n-4,j=2n-3,k=2n-2,l=2n-1,m=2n$}\label{fig2001}
\end{figure}

Let $\mathcal{A}_S\mathcal{F}_j=\{a,b,j\in\mathcal{A}_S\mathcal{F}\}$. For every $j\in\mathcal{I}_S$, $D\mathcal{X}_S\mathcal{F}$ has the form
$$(D\mathcal{X}_S\mathcal{F})_{ji}(\mathbf{x})=sech^2(x_i)\sech^2\big(c+\tanh(x_i)+\tanh(x_k)\big)$$
where $i,k\in\mathcal{A}_S\mathcal{F}_j$. Hence,
$\max_{\textbf{x}\in X_S}\left|(D\mathcal{X}_S\mathcal{F})_{ji}(\mathbf{x})\right|\leq\sech^2(c-2)$
for each $j\in\mathcal{I}_S$, $i\in\mathcal{A}_S\mathcal{F}_j$, and $c\geq 2$. One can compute that for $c\geq 2$,
$$\Lambda_{ij}=
\begin{cases}
\sech^2(c-2) \ \ \text{for} \ \ j\in\mathcal{I}_S,\  i\in\mathcal{A}_S(\mathcal{F})_j\\
\ \ 1 \ \ \ \ \ \ \ \ \ \ \ \ \ \text{for}  \ \ j=k,l,m; \ \ i=k\\
\ \ 0 \ \ \ \ \ \ \ \ \ \ \ \ \ \text{otherwise}
\end{cases}$$
gives a stability matrix $\Lambda^T$ of $(\mathcal{X}_S\mathcal{F},X_S)$.

To compute the spectral radius of $\Lambda^T$ let $\Lambda-\lambda I_{2n}$ be the block matrix
$$\Lambda-\lambda I_{2n}=\left[
\begin{array}{cc}
A & B\\
C & D
\end{array}
\right]$$
where $A\in\mathbb{R}^{n\times n}$ is the matrix with rows and columns indexed by $\mathcal{I}_S$. Hence, $A=\lambda I_n$ implying $A^{-1}=\lambda^{-1}I_n$ for $\lambda\neq0$.

Using the identity in equation (\ref{identity1}) it follows that $\det(\Lambda-\lambda I)=\det(\lambda^{-1}\tilde{\Lambda}-\lambda I)$ where the matrix $\tilde{\Lambda}\in\mathbb{R}^{n\times n}$ is given by
$$\tilde{\Lambda}=\left[
\begin{array}{ccccc}
2\sech^2(c-2) & \sech^2(c-2) & & &  \sech^2(c-2)\\
\sech^2(c-2) & \ddots & \ddots & &\\
& \ddots & & \ddots &\\
& & \ddots & \ddots &\sech^2(c-2)\\
\sech^2(c-2) & & & \sech^2(c-2) & 2\sech^2(c-2)\\
\end{array}
\right].$$
Since each row sum of $\tilde{\Lambda}$ is $4\sech^2(c-2)$ then $\rho(\tilde{\Lambda})=4\sech^2(c-2)$.

As $\det(\Lambda-\lambda I)=\det(\lambda^{-1}\tilde{\Lambda}-\lambda I)$ then $\det(\Lambda-\lambda I)=0$ only if $\det(\tilde{\Lambda}-\lambda^2 I)=0$ and so long as $\lambda\neq 0$. Hence, any nonzero $\lambda_0\in\mathbb{C}$ is an eigenvalue of $\tilde{\Lambda}$ only if $\pm\sqrt{|\lambda_0|}$ is an eigenvalue of $\Lambda$. Therefore,
$$\rho(\Lambda)=\sqrt{\rho(\tilde{\Lambda})}=2\sech(c-2).$$
As $2\sech(c-2)<1$ for $c>2+\sech^{-1}(1/2)\approx 3.31$ then $(\mathcal{F},X)$ does in fact have a globally attracting fixed point by theorem \ref{stability} for $c>2+\sech^{-1}(1/2)$.
\end{example}

In the next section we combine the results of section 4 on removing time delays with the theory of dynamical network expansions to greatly simplify the computational process in constructing and analyzing a dynamical network expansions.

\section{Restrictions of Dynamical Networks}
In the previous section we introduced the notion of a dynamical network expansion. In this section we show that any such expansion has the form of a delayed dynamical network. By removing these delays we will be able to develop a new procedure of dynamical network restrictions.

As with dynamical network expansions such restrictions allow for improved stability estimates of a dynamical network's global stability. However, the computational effort needed to carry out this restrictions procedure is far less when compared with the procedure of constructing and analyzing expansions in section 5.

We note that restrictions are somewhat similar to the dynamical network reductions introduced in \cite{BW2012}. The fundamental difference though is that dynamical network reductions preserve the spectrum of a dynamical network whereas the restrictions we introduce involve the removal of time-delays and so do not have this property.

\subsection{Restricting Dynamical Networks}
Let $(\mathcal{F},X)$ be a dynamical network and suppose $S\in st_0(\Gamma_\mathcal{F})$. For $j\in\mathcal{I}_S$ let $\mathcal{F}_{(j,0)}=\mathcal{F}$. For $i\geq1$ define $\mathcal{F}_{(j,i)}$ to be the function
$$\mathcal{F}_{(j,i-1)}=\mathcal{F}_{(j,i-1)}(x_{j_1},\dots,x_{j_m})$$
in which each variable $x_{j_\ell}$ is replaced by the function $\mathcal{F}_{j_\ell}$ if $j_\ell\notin\mathcal{I}_S$. If $j_\ell\in\mathcal{I}_S$ for each $1\leq\ell\leq m$ then let $\mathcal{R}_S\mathcal{F}_j=\mathcal{F}_{(j,i-1)}$. Note that if $\mathcal{R}_S\mathcal{F}_j=\mathcal{F}_{(j,i-1)}(x_1,\dots,x_m)$ then $\{1,\dots,m\}\subseteq\mathcal{I}_S$.

\begin{definition}\label{reduction}
Suppose $S\in st_0(\Gamma_\mathcal{F})$. Let $X|_S=\bigoplus_{i\in\mathcal{I}_S}X_i$ and define
$$
\mathcal{R}_S\mathcal{F}=\bigoplus_{j\in\mathcal{I}_S}\mathcal{R}_S\mathcal{F}_j.$$
The dynamical network $(\mathcal{R}_S\mathcal{F},X|_S)$ is called the \emph{restriction} of the dynamical network $(\mathcal{F},X)$ to $S$.
\end{definition}

As is the case with dynamical network expansions we consider the restriction $(\mathcal{R}_S\mathcal{F},X|_S)$ to be a dynamical network without local dynamics. We also note that the restriction $(\mathcal{R}_S\mathcal{F},X|_S)$ is simpler than the original dynamical network in at least two ways. The first is that $(\mathcal{R}_S\mathcal{F},X|_S)$ is a lower dimensional system than $(\mathcal{F},X)$. The second is that the graph $\Gamma_{\mathcal{R}_S\mathcal{F}}$ structurally simpler having less vertices and edges than $\Gamma_{\mathcal{F}}$ and potentially less paths and cycles, etc.

\begin{example}
Consider the dynamical network $(\mathcal{F},X)$ of the form
$$\mathcal{F}(\textbf{x})=\left[
\begin{array}{l}
\mathcal{F}_1(x_6)\\
\mathcal{F}_2(x_1)\\
\mathcal{F}_3(x_2,x_5,x_6)\\
\mathcal{F}_4(x_3)\\
\mathcal{F}_5(x_2,x_3,x_4)\\
\mathcal{F}_6(x_5)
\end{array}\right].$$
Note that $S=\{v_1,v_3,v_5\}$ is a complete structural set of the graph of interactions $\Gamma_{\mathcal{F}}$ shown in figure \ref{fig6.1} (left). Replacing each of the variables $x_i$ by $\mathcal{F}_{i}$ in each $\mathcal{F}_j$ for $i\notin \mathcal{I}_S$ and $j\in \mathcal{I}_S$ yields the functions
\begin{align*}
\mathcal{F}_{(1,1)}&=\mathcal{F}_1\big(\mathcal{F}_6(x_5)\big),\\ \mathcal{F}_{(2,1)}&=\mathcal{F}_3\big(\mathcal{F}_2(x_1),x_5,\mathcal{F}_6(x_5)\big),\\ \mathcal{F}_{(3,1)}&=\mathcal{F}_5\big(\mathcal{F}_2(x_1),x_3,\mathcal{F}_4(x_3)\big).
\end{align*}
\begin{figure}
  \begin{center}
    \begin{overpic}[scale=.4]{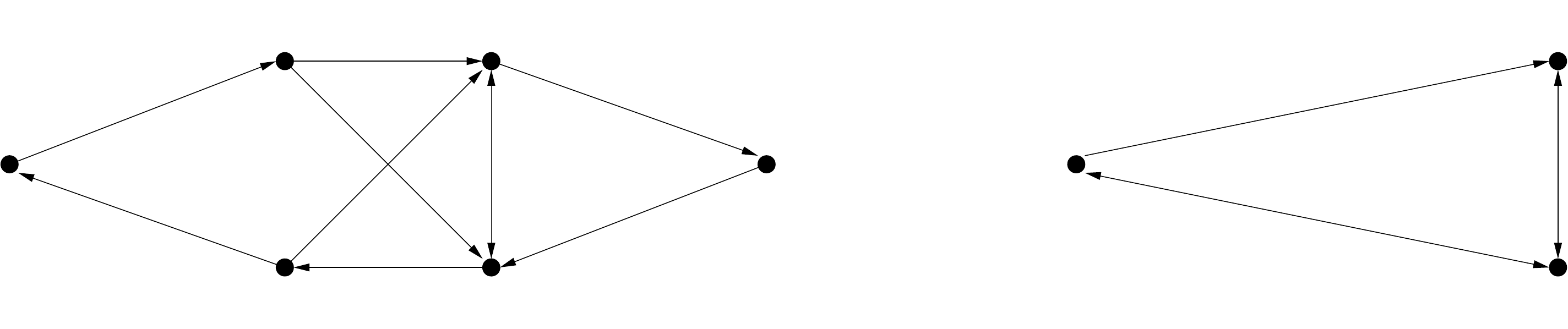}
    \put(24,-2){$\Gamma_{\mathcal{F}}$}
    \put(-3,10){$v_1$}
    \put(17,19){$v_2$}
    \put(30,19){$v_3$}
    \put(50,10){$v_4$}
    \put(30,1){$v_5$}
    \put(17,1){$v_6$}
    \put(81,-2){$\Gamma_{\mathcal{R}_S\mathcal{F}}$}
    \put(65,10){$v_1$}
    \put(101,17){$v_3$}
    \put(101,3){$v_5$}
    \end{overpic}
  \end{center}
  \caption{The graph of interactions of $(\mathcal{F},X)$ and its restriction $(\mathcal{R}_S\mathcal{F},X|_S)$ in example 6}\label{fig6.1}
\end{figure}
Since each variable of $\mathcal{F}_{(1,1)}$, $\mathcal{F}_{(2,1)}$, and $\mathcal{F}_{(3,1)}$ is indexed by an element of $\mathcal{I}_S$ then $\mathcal{R}_S\mathcal{F}:X|_S\rightarrow X|_S$ is given by
$$\mathcal{R}_S\mathcal{F}(\textbf{x})=\left[
\begin{array}{l}
\mathcal{F}_1\big(\mathcal{F}_6(x_5)\big)\\
\mathcal{F}_3\big(\mathcal{F}_2(x_1),x_5,\mathcal{F}_6(x_5)\big)\\
\mathcal{F}_5\big(\mathcal{F}_2(x_1),x_3,\mathcal{F}_4(x_3)\big)
\end{array}\right].$$
The graph of interactions $\Gamma_{\mathcal{R}_S\mathcal{F}}$ is shown in figure \ref{fig6.1} (right).
\end{example}

\begin{theorem}\label{theorem5}
Suppose $A$ is a stability matrix of the dynamical network $(\mathcal{F},X)$. If $\rho(A)<1$
then any restriction $(\mathcal{R}_S\mathcal{F},X|_S)$ has a globally attracting fixed point.
\end{theorem}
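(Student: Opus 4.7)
The strategy is to recognize the restriction as the undelayed form of the expansion and then apply Theorem \ref{last} together with the delay-removal machinery of Section 4. I would begin by invoking Theorem \ref{last}: since $A$ is a stability matrix of $(\mathcal{F},X)$ with $\rho(A)<1$, there exists a stability matrix $\tilde{A}$ of the expansion $(\mathcal{X}_S\mathcal{F},X_S)$ satisfying $\rho(\tilde{A})\leq\rho(A)<1$. This shifts the problem from the original network to the expanded one.

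Next I would observe that the auxiliary components $\mathcal{X}_S\mathcal{F}_{i;\underline{\gamma}}(x_{i-1,\underline{\gamma}}) = x_{i-1,\underline{\gamma}}$ in (\ref{eq6}) are structurally identical to the delay-propagation components $\mathcal{H}_{\delta}(x_{\delta-1}) = x_{\delta-1}$ in (\ref{eq5}): each admissible path $\underline{\gamma} = \ell_1,\dots,\ell_N$ produces a chain of identity maps carrying the source value $x_{\ell_1}$ along $N-2$ intermediate slots before it is consumed by some $\mathcal{X}_S\mathcal{F}_j$ with $j\in\mathcal{I}_S$. One can therefore read $(\mathcal{X}_S\mathcal{F},X_S)$ as the associated network $(\mathcal{N}_{\mathcal{H}},X_{\mathcal{H}})$ of a time-delayed network $(\mathcal{H},(X|_S)^T)$ on the restricted state space $X|_S$, the length of each admissible path $\underline{\gamma}$ playing the role of the corresponding delay. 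Undelaying this $\mathcal{H}$ collapses every chain of identity maps into the direct substitution of $\mathcal{F}_{\ell_1}$ into $\mathcal{F}_j$, which is exactly the iterative replacement used in Definition \ref{reduction} to build $\mathcal{R}_S\mathcal{F}$. Hence $(\mathcal{U}_{\mathcal{H}},X|_S) = (\mathcal{R}_S\mathcal{F},X|_S)$.

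Having identified the restriction as $\mathcal{U}_{\mathcal{H}}$, I would finish by applying Theorem \ref{theorem3} to $\tilde{A}$, now regarded as a stability matrix of $(\mathcal{H},(X|_S)^T)$: since $\rho(\tilde{A})<1$, the undelayed network $(\mathcal{U}_{\mathcal{H}},X|_S) = (\mathcal{R}_S\mathcal{F},X|_S)$ has a globally attracting fixed point, which is the desired conclusion.

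The main obstacle is the identification constraint in (\ref{eq3}): in a genuine time-delayed network one requires $x_{i,j;\ell,m}=x_{s,t;u,v}$ whenever $i=s$ and $\ell=u$, whereas the expansion distinguishes $x_{i,\underline{\gamma}}$ from $x_{i,\underline{\gamma}'}$ for distinct admissible paths sharing a common source. I would handle this either by checking that imposing the identification only merges parallel identity-arc chains and therefore cannot raise the spectral radius, or, more robustly, by reproving the conclusion of Theorem \ref{theorem3} directly on the expansion: iteratively apply the arc-contraction of Lemma \ref{lemma103} and Corollary \ref{corollary3} to each identity component of $(\mathcal{X}_S\mathcal{F},X_S)$ until every arc terminates in $\mathcal{I}_S$, producing a stability matrix of $(\mathcal{R}_S\mathcal{F},X|_S)$ whose spectral radius is still strictly less than one, and then concluding via Theorem \ref{stability}.
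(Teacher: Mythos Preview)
Your proposal is correct and follows essentially the same route as the paper. The paper packages your second paragraph into an explicit construction: it builds a time-delayed network $(\mathcal{D}_S\mathcal{F},X|_S^T)$ by replacing, one admissible sequence at a time, each chain of identity components by a genuine delay, and then verifies both that $(\mathcal{N}_{\mathcal{D}_S\mathcal{F}},X_S)=(\mathcal{X}_S\mathcal{F},X_S)$ (your identification of the expansion with an $\mathcal{N}_{\mathcal{H}}$) and that $(\mathcal{U}_{\mathcal{D}_S\mathcal{F}},X|_S)=(\mathcal{R}_S\mathcal{F},X|_S)$ (your identification of the restriction with $\mathcal{U}_{\mathcal{H}}$). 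Your worry about the identification constraint in (\ref{eq3}) is precisely what this sequential construction is designed to handle, so your instinct there is right; the paper's device is the clean way to do it. One point where you are actually more careful than the paper's written proof: you explicitly invoke Theorem~\ref{last} to obtain a stability matrix of the expansion with spectral radius below one, which is what Theorem~\ref{theorem3} requires, whereas the paper's proof passes through global attractivity and then cites Theorem~\ref{theorem3} somewhat loosely.
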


That is, if a dynamical network is known to be stable by theorem \ref{stability} the same then is true of any of its restrictions. However, if the restriction $(\mathcal{R}_S\mathcal{F},X|_S)$ is known to be stable it is not always the case that the unrestricted system $(\mathcal{F},X)$ has a globally attracting fixed point. To determine when this is the case we introduce the following concept.

\begin{definition}
Let $S$ be a complete structural set of $\Gamma_{\mathcal{F}}$. Then $S$ is called a \emph{basic structural set} of $\Gamma_{\mathcal{F}}$ if
$$|\mathcal{B}_{ij}(\Gamma_{\mathcal{F}};S)|\leq 1 \ \ \text{for all} \ \ i,j\in\mathcal{I}_S.$$
\end{definition}
If $S$ is a basic structural set of $\Gamma_{\mathcal{F}}$ we write $S\in bas(\Gamma_\mathcal{F})$. The notions of a basic structural set and a non-distributed interaction are related and allow us to formulate and prove the following theorem.

\begin{theorem}\label{theorem6}
Suppose $S\in bas(\Gamma_{\mathcal{F}})$. If $A$ is a stability matrix of $(\mathcal{F},X)$ with the property $\rho(A)<1$ then $(\mathcal{R}_S\mathcal{F},X|_S)$ has a globally attracting fixed point. Conversely, if $\tilde{A}$ is a stability matrix of $(\mathcal{R}_S\mathcal{F},X|_S)$ with $\rho(\tilde{A})<1$ then $(\mathcal{F},X)$ has a globally attracting fixed point.
\end{theorem}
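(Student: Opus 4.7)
The strategy is to recognize the expansion $(\mathcal{X}_S\mathcal{F},X_S)$ as the non-delayed reformulation $(\mathcal{N}_\mathcal{H},X_\mathcal{H})$ of a suitably chosen time-delayed dynamical network $(\mathcal{H},(X|_S)^T)$, and to recognize the restriction $(\mathcal{R}_S\mathcal{F},X|_S)$ as the undelayed counterpart $(\mathcal{U}_\mathcal{H},X|_S)$. Granted this identification, the basic structural set hypothesis forces $\mathcal{H}$ to be non-distributed, and the two halves of the theorem fall out by chaining Theorems \ref{gafp}, \ref{next}, and \ref{theorem3.45}.

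Concretely, I would take $T-1$ to be the maximum of $|\underline{\gamma}|-2$ over $\underline{\gamma}\in\mathcal{A}_S(\mathcal{F})$, and for each $j\in\mathcal{I}_S$ define $\mathcal{H}_j$ by starting from $\mathcal{R}_S\mathcal{F}_j$ and replacing every occurrence of a variable $x_i$ (with $i\in\mathcal{I}_S$) by $x_i^{-\tau(\underline{\gamma})}$, where $\underline{\gamma}$ is the unique branch in $\mathcal{B}_{ij}(\Gamma_\mathcal{F};S)$ whose recursive substitution introduced that occurrence and $\tau(\underline{\gamma})$ counts its copy operations. Uniqueness of $\underline{\gamma}$ is precisely the basic structural set condition $|\mathcal{B}_{ij}(\Gamma_\mathcal{F};S)|\leq 1$, and it also guarantees that each $\mathcal{H}_j$ depends on at most one time-delayed variable of the form $x_i^{-\tau}$, placing $(\mathcal{H},(X|_S)^T)$ in $nd((X|_S)^T)$. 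A direct inspection of the construction in section 3.1 then shows $(\mathcal{N}_\mathcal{H},X_\mathcal{H})=(\mathcal{X}_S\mathcal{F},X_S)$: the copy functions $\mathcal{H}_\delta$ of (\ref{eq5}) coincide with the $\mathcal{X}_S\mathcal{F}_{i;\underline{\gamma}}$ of (\ref{eq6}), the head functions $\tilde{\mathcal{H}}_j$ match $\mathcal{X}_S\mathcal{F}_j$, and the identifications imposed in (\ref{eq3}) implement exactly the sharing of branch variables with matching initial segments. Moreover $\mathcal{U}_\mathcal{H}(\textbf{x})=\mathcal{H}(\textbf{x},\dots,\textbf{x})$ simply collapses every delay, reproducing the composition that defines $\mathcal{R}_S\mathcal{F}$, so $(\mathcal{R}_S\mathcal{F},X|_S)=(\mathcal{U}_\mathcal{H},X|_S)$.

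With this dictionary the first assertion is already contained in Theorem \ref{theorem5}. For the converse, a stability matrix $\tilde{A}$ of $(\mathcal{R}_S\mathcal{F},X|_S)=(\mathcal{U}_\mathcal{H},X|_S)$ with $\rho(\tilde{A})<1$ produces, via Theorem \ref{theorem3.45}, a stability matrix $A$ of $(\mathcal{H},(X|_S)^T)$ with $\rho(A)<1$; corollary \ref{corollary1} then gives a globally attracting fixed point for $(\mathcal{N}_\mathcal{H},X_\mathcal{H})=(\mathcal{X}_S\mathcal{F},X_S)$, and Theorem \ref{gafp} transfers this to $(\mathcal{F},X)$. The main obstacle I anticipate is making the identification $X_S\leftrightarrow X_\mathcal{H}$ fully rigorous: one must verify that branches sharing an initial segment produce exactly the identifications forced by (\ref{eq3}) and that distinct branches give distinct auxiliary variables, so the promised correspondence is a genuine conjugacy of dynamical systems rather than a mere heuristic match of components.
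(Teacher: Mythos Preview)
Your proposal is correct and follows essentially the same route as the paper: the paper constructs exactly your time-delayed network (denoted there $\mathcal{D}_S\mathcal{F}$), records the identifications $(\mathcal{N}_{\mathcal{D}_S\mathcal{F}},X_S)=(\mathcal{X}_S\mathcal{F},X_S)$ and $(\mathcal{U}_{\mathcal{D}_S\mathcal{F}},X|_S)=(\mathcal{R}_S\mathcal{F},X|_S)$ as Lemma~\ref{lemma6} and equation~(\ref{eq7}), observes that $S\in bas(\Gamma_\mathcal{F})$ forces $(\mathcal{D}_S\mathcal{F},X|_S^T)\in nd(X|_S^T)$, and then chains Theorem~\ref{theorem3.45}, Lemma~\ref{lemma6}, and Theorem~\ref{gafp} just as you do. The obstacle you flag about matching the auxiliary coordinates of $X_S$ with those of $X_\mathcal{H}$ under the identifications of~(\ref{eq3}) is handled in the paper by building $\mathcal{D}_S\mathcal{F}$ one admissible sequence at a time and identifying indices explicitly, which is the same bookkeeping you would need to supply.
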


Before giving the proof of theorem \ref{theorem5} and \ref{theorem6} we consider the following example.

\begin{example}
As in example 5 let $(\mathcal{F},X)$ be the dynamical network given by
$$\mathcal{F}_j(\mathbf{x})=\tanh(x_{j-1})+\tanh(x_{j+1})+c$$
for $1\leq j\leq 2n$ where the indices are taken mod $2n$, $X_i=\mathbb{R}$, and $c\geq0$. As $S=\{v_2,v_4,\dots,v_{2n}\}$ is a complete structural set of $\Gamma_\mathcal{F}$ then the dynamical network restriction $(\mathcal{R}_S\mathcal{F},X|_S)$ is defined and is constructed as follows.

For $j\in\mathcal{I}_S$ the component function $\mathcal{F}_j=\mathcal{F}_{(j,0)}$ has variables $x_{j-1}$ and $x_{j+1}$ where both $j\pm 1\notin\mathcal{I}_S$. The function $\mathcal{F}_{(j,1)}$ is then
\begin{align*}
\mathcal{F}_{(j,1)}(x_{j-2},x_j,x_{j+2})=&\tanh\big[\tanh(x_{j-2})+\tanh(x_{j})+c\big]+\\ &\tanh\big[\tanh(x_{j})+\tanh(x_{j+2})+c\big]+c
\end{align*}
Since each variable of $\mathcal{F}_{(j,1)}$ is indexed by an element of $\mathcal{I}_S$ then $\mathcal{F}_{(j,1)}=\mathcal{R}_S\mathcal{F}_j$. Therefore, the dynamical network restriction  $(\mathcal{R}_S\mathcal{F},X|_S)$ is given by
$$\mathcal{R}_S\mathcal{F}=\bigoplus_{j=1}^n\mathcal{R}_S\mathcal{F}_{2j} \ \ \text{and} \ \ X|_S=\mathbb{R}^{n}.$$
The graph of interactions $\Gamma_{\mathcal{R}_S\mathcal{F}}$ is shown in figure \ref{fig6}. Moreover, as
$$|\mathcal{B}_{ij}(\Gamma_{\mathcal{F}})|=\begin{cases}
1 & j=i\pm2\\
1 & j=i\\
0 & otherwise
\end{cases} \ \ \text{for} \ \ i,j\in\mathcal{I}_S$$
then $S\in bas(\Gamma_{\mathcal{F}})$. As in example 5 one can compute that
$$|D\mathcal{R}_S\mathcal{F}_{ji}(\textbf{x})|\leq
\begin{cases}
2\sech^2(c-2) \ &\text{for} \ \ i=j\\
\sech^2(c-2) \ &\text{for} \ \ i=j\pm 2\\
0 &\text{otherwise}
\end{cases}
$$
for $j\in\mathcal{I}_S$ and $c\geq 2$. Hence, the matrix $\tilde{\Lambda}$ given in example 5 is a stability matrix of $(\mathcal{R}_S\mathcal{F},X|_S)$. Since $\rho(\tilde{\Lambda})=4\sech^2(c-2)$ then as in example 5 theorem \ref{theorem6} implies that the original unrestricted network $(\mathcal{F},X)$ has a globally attracting fixed point if $c>2+\sech^{-1}(1/2)$.

Importantly, if we were to analyze $(\mathcal{F},X)$ as a Cohen-Grossberg network using theorem \ref{CGtheorem} (allowing $\epsilon=1$) we could not deduce the stability of the system for any value of $c$. In fact, the stability criteria in theorem \ref{CGtheorem} does not depend on the constants $c_j$. However, by use of a dynamical network restriction we gain more information about the stability of such networks. Specifically, it follows that the stability of the Cohen-Grossberg networks considered in section 2.2 depend on $\mathcal{L}$, $\rho(|W|)$, and the values of $c_j$ for $\epsilon\approx 1$.

It is also worth mentioning again that both the process of constructing $(\mathcal{R}_S\mathcal{F},X|_S)$ and analyzing its stability are significantly simpler than finding and analyzing the expansion $(\mathcal{X}_S\mathcal{F},X_S)$ in example 5. (As evidence, one can compare the graphs $\Gamma_{\mathcal{X}_S\mathcal{F}}$ and $\Gamma_{\mathcal{R}_S\mathcal{F}}$ in figures 7 and 9.) This should not be surprising based on how expansions and restrictions are defined. In fact, for any dynamical network $(\mathcal{F},X)$ it is always computationally easier to analyze the restriction $(\mathcal{R}_S\mathcal{F},X|_S)$ compared with the expansion $(\mathcal{X}_S\mathcal{F},X_S)$.

\begin{figure}
  \begin{center}
    \begin{overpic}[scale=.5]{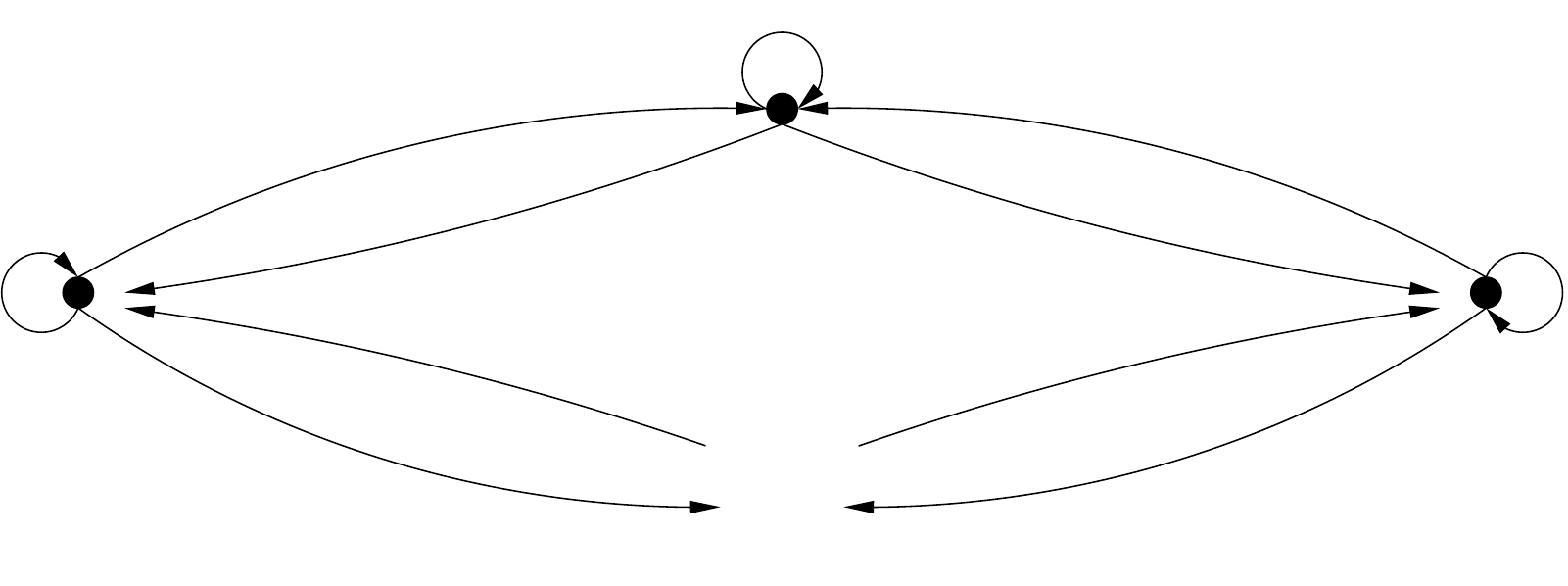}
    \put(44,-3){$\Gamma_{\mathcal{R}_S\mathcal{F}}$}
    \put(47,37){$v_{2n}$}
    \put(-6,18){$v_2$}
    \put(101,18){$v_{2n-2}$}
    \put(47.5,7){$\dots$}
    \end{overpic}
  \end{center}
  \caption{}\label{fig6}
\end{figure}
\end{example}

In examples 5 and 7 both expansions and restrictions were used to gain improved stability estimates of the untransformed dynamical network $(\mathcal{F},X)$. Although using the restrictions $(\mathcal{R}_S\mathcal{F},X|_S)$ was computationally and procedurally simpler than using $(\mathcal{X}_S\mathcal{F},X_S)$ both gave the same improved stability estimate. This outcome is not a coincidence but holds in general.

\begin{theorem}\label{theorem4}
Suppose $S\in bas(\Gamma_{\mathcal{F}})$. There is a stability matrix $\Lambda^T$ of  $(\mathcal{X}_S\mathcal{F},X_S)$ with property $\rho(\Lambda^T)<1$ if and only if there is a stability matrix $\tilde{\Lambda}^T$ of $(\mathcal{R}_S\mathcal{F},X|_S)$ such that $\rho(\tilde{\Lambda}^T)<1$.
\end{theorem}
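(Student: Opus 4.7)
The plan is to exhibit the expansion $(\mathcal{X}_S\mathcal{F},X_S)$ as $(\mathcal{N}_{\mathcal{H}},X_{\mathcal{H}})$ for a suitably chosen time-delayed dynamical network $(\mathcal{H},X^T)$, observe that the assumption $S\in bas(\Gamma_{\mathcal{F}})$ forces $(\mathcal{H},X^T)\in nd(X^T)$, identify the corresponding undelayed network $(\mathcal{U}_{\mathcal{H}},X)$ with $(\mathcal{R}_S\mathcal{F},X|_S)$, and then invoke Theorem \ref{theorem3.45}.

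First I would make the dictionary between the expansion and a time-delayed network explicit. Each admissible sequence $\underline{\gamma}=\ell_1,\dots,\ell_N\in\mathcal{A}_S(\mathcal{F})$ introduces the $N-2$ auxiliary functions $\mathcal{X}_S\mathcal{F}_{i;\underline{\gamma}}(x_{i-1,\underline{\gamma}})=x_{i-1,\underline{\gamma}}$, which are formally identical to the time-delay relays $\mathcal{H}_{i,j;\ell,m}(x_{i,j;\ell-1,m})=x_{i,j;\ell-1,m}$ appearing in equation (\ref{eq5}). I therefore define a time-delayed interaction $\mathcal{H}_j$ by declaring a variable $x^{k-(N-1)}_{\ell_1}$ in $\mathcal{H}_j$ for every $\underline{\gamma}=\ell_1,\dots,\ell_N\in\mathcal{A}_S(\mathcal{F})$ with $\ell_N=j$, with $T$ chosen as the maximum branch length. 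Under the relabeling $x_i\leftrightarrow x_i^{k-0}$ and $x_{i,j;\ell,m}\leftrightarrow x_i^{k-\ell}$ extended in the obvious way to admissible sequences, the function $\mathcal{N}_{\mathcal{H}}$ built from this $\mathcal{H}$ coincides with $\mathcal{X}_S\mathcal{F}$, and $X_{\mathcal{H}}$ coincides with $X_S$. In particular, the stability matrices of these two systems, both considered as networks without local dynamics, are the same.

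Second, I would use $S\in bas(\Gamma_{\mathcal{F}})$ to check that the constructed $(\mathcal{H},X^T)$ is non-distributed. If $\mathcal{H}_j$ had two variables $x_i^{k-\tau}$ and $x_i^{k-\tau'}$ with $\tau\neq\tau'$, then there would be two admissible sequences from $v_i$ to $v_j$ of different lengths, hence $|\mathcal{B}_{ij}(\Gamma_{\mathcal{F}};S)|\geq 2$, contradicting the basic-set hypothesis. Next I would check that the undelayed network $(\mathcal{U}_{\mathcal{H}},X)$ is nothing but $(\mathcal{R}_S\mathcal{F},X|_S)$. Setting all delayed copies of $x_i$ equal to $x_i$ in $\mathcal{H}$ is exactly what the expansion-identity relays $\mathcal{X}_S\mathcal{F}_{i;\underline{\gamma}}(x_{i-1,\underline{\gamma}})=x_{i-1,\underline{\gamma}}$ do when one substitutes them back into $\mathcal{X}_S\mathcal{F}_j$; this yields precisely the iterated substitution of $\mathcal{F}_{j_\ell}$ for $x_{j_\ell}$ used to build $\mathcal{R}_S\mathcal{F}_j$ in Definition \ref{reduction}.

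With these identifications in place, the theorem is immediate from Theorem \ref{theorem3.45}: the stability matrices $\Lambda^T$ of $(\mathcal{X}_S\mathcal{F},X_S)=(\mathcal{N}_{\mathcal{H}},X_{\mathcal{H}})$ are, by definition, the stability matrices of $(\mathcal{H},X^T)$, and the stability matrices $\tilde{\Lambda}^T$ of $(\mathcal{R}_S\mathcal{F},X|_S)=(\mathcal{U}_{\mathcal{H}},X)$ are those of the corresponding undelayed network, so Theorem \ref{theorem3.45} gives the equivalence $\rho(\Lambda^T)<1\iff \rho(\tilde{\Lambda}^T)<1$. The main obstacle I anticipate is purely bookkeeping: verifying that the index relabeling between admissible sequences in $\mathcal{A}_S(\mathcal{F})$ and the delay indices in $\mathcal{I}_{\mathcal{H}}$ is a bijection that matches the relay functions on both sides. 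The basic-set hypothesis is precisely what makes this bijection well-defined without collapsing distinct branches into distributed delays; if one merely had $S\in st_0(\Gamma_{\mathcal{F}})$, two distinct branches with the same endpoints would force distributed delays and Theorem \ref{theorem3.45} would no longer apply, matching the example 3 phenomenon highlighted earlier in the paper.
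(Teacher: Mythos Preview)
Your proposal is correct and follows essentially the same route as the paper. The paper constructs a time-delayed network $(\mathcal{D}_S\mathcal{F},X|_S^T)$ from the expansion, establishes $(\mathcal{N}_{\mathcal{D}_S\mathcal{F}},X_S)=(\mathcal{X}_S\mathcal{F},X_S)$ and $(\mathcal{U}_{\mathcal{D}_S\mathcal{F}},X|_S)=(\mathcal{R}_S\mathcal{F},X|_S)$, observes that $S\in bas(\Gamma_{\mathcal{F}})$ forces $(\mathcal{D}_S\mathcal{F},X|_S^T)\in nd(X|_S^T)$, and then invokes Theorem~\ref{theorem3.45}; your $\mathcal{H}$ is exactly their $\mathcal{D}_S\mathcal{F}$, and your bookkeeping remarks about the bijection between admissible sequences and delay indices mirror the identifications the paper makes in Section~6.2.
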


As a final observation, in this section, we note that it is often possible to sequentially restrict a dynamical network and thereby sequentially improve ones estimate of whether the original (untransformed) network has a globally attracting fixed point. Moreover, it is possible to use restrictions to get improved stability estimates of the time-delayed network $(\mathcal{H},X^T)$ by restricting the undelayed system $(\mathcal{U}_{\mathcal{H}},X)$.

\subsection{Proof of Theorems \ref{theorem5}, \ref{theorem6}, and \ref{theorem4}}
The major idea needed to prove the theorems of the previous section is that any dynamical network expansion is related to a time-delayed dynamical network. More specifically, for the expansion $(\mathcal{X}_S\mathcal{F},X_S)$ any admissible sequence $\underline{\gamma}=0,\dots,\tau-1\in\mathcal{A}_S(\mathcal{F})$ corresponds to the functions
\begin{equation}\label{eq4}
\mathcal{X}_S\mathcal{F}_{i;\underline{\gamma}}(x_{i-1,\underline{\gamma}})=x_{i-1,\underline{\gamma}}
\end{equation}
for $1<i<|\underline{\gamma}|$. That is, after $\tau-1$ iterates the function $\mathcal{X}_S\mathcal{F}_{\tau-1}$ depends on the value of $x_0$. Therefore, every admissible sequence $\underline{\gamma}\in\mathcal{A}_S(\mathcal{F})$ corresponds to a time delay. By formally introducing delays into the expansion we can relate the stability of $(\mathcal{X}_S\mathcal{F},X_S)$ to that of $(\mathcal{R}_S\mathcal{F},X|_S)$ using techniques from sections 3 and 4. This can be done as follows.

For $\underline{\nu}=0,\dots,\tau-1\in\mathcal{A}_S(\mathcal{F})$ let
$$\mathcal{A}_{\underline{\nu}}(\mathcal{F})=\mathcal{A}_S(\mathcal{F})-\underline{\nu} \ \ \text{and} \ \ \mathcal{I}_{\underline{\nu}}=(\mathcal{I}_S-\{\tau-1\})\cup\underline{\nu}.$$
Let $\mathcal{X}_S\mathcal{F}_{\underline{\nu}}$ be the function $\mathcal{X}_S\mathcal{F}_{\tau-1}$ in which the variable $x_{\underline{\nu}}$ is replaced by the time delayed variable $x_0^{k-(\tau-1)}$.

Observe that the map $\mathcal{D}_{\underline{\nu}}\big(\mathcal{X}_S\mathcal{F}\big):X^\tau_{\underline{\nu}}\rightarrow X_{\underline{\nu}}$ given by
$$\mathcal{D}_{\underline{\nu}}\big(\mathcal{X}_S\mathcal{F}\big)= \Big(\bigoplus_{j\in\mathcal{I}_{\underline{\nu}}}\mathcal{X}_S\mathcal{F}_j\Big) \oplus\Big(\bigoplus_{
\begin{smallmatrix}
\underline{\gamma}\in\mathcal{A}_{\underline{\nu}}(\mathcal{F})\\
1<i<|\underline{\gamma}|
\end{smallmatrix}} \mathcal{X}_S\mathcal{F}_{i;\underline{\gamma}}\Big) \ \ \text{with}$$
$$X_{\underline{\nu}}=\Big(\bigoplus_{j\in\mathcal{I}_{\underline{\nu}}}X_j\Big)\oplus\Big(\bigoplus_{
\begin{smallmatrix}
\underline{\gamma}\in\mathcal{A}_{\underline{\nu}}(\mathcal{F})\\
1<i<|\underline{\gamma}|
\end{smallmatrix}}X_{i;\underline{\gamma}}\Big)$$
defines the time-delayed dynamical network $(\mathcal{D}_{\underline{\nu}}\big(\mathcal{X}_S\mathcal{F}\big),X_{\underline{\nu}}^\tau)$. By identifying the index $0,\tau-1;i-1,\tau-2\in\mathcal{I}_{\mathcal{D}_{\underline{\nu}}(\mathcal{X}_S\mathcal{F})}$ with the index $i;\underline{\nu}$ for $1<i\leq\tau-1$ then $(\mathcal{N}_{\mathcal{D}_{\underline{\nu}}(\mathcal{X}_S\mathcal{F})},X_S)=(\mathcal{X}_S\mathcal{F},X_S)$. This follows from the fact that the functions
\begin{align*}
(\mathcal{N}_{\mathcal{D}_{\underline{\nu}}(\mathcal{X}_S\mathcal{F})})_{0,\tau-1;i-1,\tau-2}(x_{0,\tau-1;i-2,\tau-2})&= x_{0,\tau-1;i-2,\tau-2} \ \ \text{and}\\
\mathcal{X}_S\mathcal{F}_{i;\underline{\nu}}(x_{i-1;\underline{\nu}})&=x_{i-1;\underline{\nu}}
\end{align*}
have the same form (see equations (\ref{eq5}) and (\ref{eq6})). By sequentially modifying the expansion $(\mathcal{X}_S\mathcal{F},X_S)$ in this manner over all admissible sequences in $\mathcal{A}_S(\mathcal{F})$ the result is the time-delayed dynamical network
$$\big(\mathcal{D}_{\mathcal{A}_S(\mathcal{F})}(\mathcal{X}_S\mathcal{F}),X_{\mathcal{A}_S(\mathcal{F})}^T\big) \ \ \text{where} \ \ T=\max_{\underline{\gamma}\in\mathcal{A}_S(\mathcal{F})}|\underline{\gamma}|-2.$$
For simplicity we let $\mathcal{D}_{\mathcal{A}_S(\mathcal{F})}(\mathcal{X}_S\mathcal{F})=\mathcal{D}_S\mathcal{F}$ and note that the product space $X_{\mathcal{A}_S(\mathcal{F})}^T=X|_S^T$. Hence, the dynamical network $$(\mathcal{N}_{\mathcal{D}_S\mathcal{F}},X_S)=(\mathcal{X}_S\mathcal{F},X_S)$$
by identifying the index $\ell_1,\ell_N;i-1,\ell_N-1\in\mathcal{I}_{\mathcal{D}_S\mathcal{F}}$ with the index $i;\underline{\gamma}$ for all $\underline{\gamma}=\ell_1,\dots,\ell_N\in\mathcal{A}_S(\mathcal{F})$. The following is then a result of theorem \ref{next}.

\begin{lemma}\label{lemma6}
Let $S\in st_0(\Gamma_{\mathcal{F}})$. Then $(\mathcal{D}_{S}\mathcal{F},X|_S^T)$ has a globally attracting fixed point if and only if the same is true of the expansion $(\mathcal{X}_S\mathcal{F},X_S)$.
\end{lemma}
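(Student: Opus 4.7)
The plan is to apply Theorem \ref{next} directly to the time-delayed dynamical network $(\mathcal{D}_S\mathcal{F},X|_S^T)$ and then verify that the associated undelayed network $(\mathcal{N}_{\mathcal{D}_S\mathcal{F}},X_{\mathcal{D}_S\mathcal{F}})$ coincides, up to relabeling of indices, with the expansion $(\mathcal{X}_S\mathcal{F},X_S)$. Since Theorem \ref{next} already gives the equivalence between the global stability of a delayed network and that of its $\mathcal{N}$-counterpart, the only content of Lemma \ref{lemma6} lies in this identification.

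First I would verify that $(\mathcal{D}_S\mathcal{F},X|_S^T)$ is indeed a time-delayed dynamical network in the sense of Definition \ref{DDN}, so that Theorem \ref{next} applies. This is immediate from the construction preceding the statement: the map $\mathcal{D}_{\underline{\nu}}(\mathcal{X}_S\mathcal{F})$ was built by sequentially replacing, for each $\underline{\nu}=0,\dots,\tau-1\in\mathcal{A}_S(\mathcal{F})$, the variable $x_{\underline{\nu}}$ in $\mathcal{X}_S\mathcal{F}_{\tau-1}$ by the time-delayed variable $x_0^{k-(\tau-1)}$. Iterating over all admissible sequences in $\mathcal{A}_S(\mathcal{F})$ therefore produces a genuine time-delayed interaction on $X|_S^T$ with $T=\max_{\underline{\gamma}\in\mathcal{A}_S(\mathcal{F})}|\underline{\gamma}|-2$. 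Theorem \ref{next} then yields that $(\mathcal{D}_S\mathcal{F},X|_S^T)$ has a globally attracting fixed point if and only if $(\mathcal{N}_{\mathcal{D}_S\mathcal{F}},X_{\mathcal{D}_S\mathcal{F}})$ does.

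The main step is then to verify the equality $(\mathcal{N}_{\mathcal{D}_S\mathcal{F}},X_{\mathcal{D}_S\mathcal{F}})=(\mathcal{X}_S\mathcal{F},X_S)$ under the index identification stated in the excerpt, namely $\ell_1,\ell_N;i-1,\ell_N-1\mapsto i;\underline{\gamma}$ for each $\underline{\gamma}=\ell_1,\dots,\ell_N\in\mathcal{A}_S(\mathcal{F})$. I would proceed component by component. For $j\in\mathcal{I}_S$, the function $(\mathcal{N}_{\mathcal{D}_S\mathcal{F}})_j$ is by construction $\tilde{\mathcal{D}_S\mathcal{F}}_j$, which equals $\mathcal{X}_S\mathcal{F}_j$ once each delayed variable $x_0^{k-(\tau-1)}$ is renamed back to $x_{\tau-1;\underline{\nu}}$; this is exactly the inverse of the substitution performed when building $\mathcal{D}_{\underline{\nu}}(\mathcal{X}_S\mathcal{F})$. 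For the remaining ``delay'' coordinates, the defining identities (\ref{eq5}) of the $\mathcal{H}_\delta$ and (\ref{eq6}) of the $\mathcal{X}_S\mathcal{F}_{i;\underline{\gamma}}$ have precisely the same shift form $x\mapsto x$ on the previous coordinate, and the index bijection above matches the two families one-to-one. Finally, the constraint defining $X_{\mathcal{D}_S\mathcal{F}}$ in (\ref{eq3})---that coordinates sharing the first two indices coincide---corresponds under the bijection to the fact that copies of the same ``slot'' $i;\underline{\gamma}$ in $X_S$ are genuinely identified, so the product spaces agree as well.

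The part that requires care, and which I expect to be the main bookkeeping obstacle, is checking the index matching is simultaneously a bijection between $\mathcal{I}_S\cup\mathcal{I}_{\mathcal{D}_S\mathcal{F}}$ and the index set of $\mathcal{X}_S\mathcal{F}$, and respects both the function components and the identification relations defining $X_{\mathcal{D}_S\mathcal{F}}$ and $X_S$. Once this is verified, the two dynamical networks are literally the same system, so $(\mathcal{N}_{\mathcal{D}_S\mathcal{F}},X_{\mathcal{D}_S\mathcal{F}})$ has a globally attracting fixed point if and only if $(\mathcal{X}_S\mathcal{F},X_S)$ does, and combining this with Theorem \ref{next} completes the proof.
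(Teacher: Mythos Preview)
Your proposal is correct and follows essentially the same approach as the paper: the paper establishes in the paragraphs immediately preceding the lemma that $(\mathcal{N}_{\mathcal{D}_S\mathcal{F}},X_S)=(\mathcal{X}_S\mathcal{F},X_S)$ under the same index identification you describe, and then states that the lemma ``is then a result of theorem \ref{next}.'' Your write-up is simply a more detailed unpacking of that one-line deduction, with the same two ingredients (the index bijection matching equations (\ref{eq5}) and (\ref{eq6}), and the invocation of Theorem \ref{next}).
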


Moreover, note that for $j\in\mathcal{I}$ by replacing each time delayed variable $x^{k-\ell}_i$ of $\mathcal{D}_S\mathcal{F}_j$ by $x_i$ the result is the function $\mathcal{R}_S\mathcal{F}_j$. Therefore,
\begin{equation}\label{eq7}
(\mathcal{U}_{\mathcal{D}_S\mathcal{F}},X|_S)=(\mathcal{R}_S\mathcal{F},X|_S).
\end{equation}
We now give a proof of theorem \ref{theorem5}.

\begin{proof}
Suppose $A$ is a stability matrix of $(\mathcal{F},X)$ with $\rho(A)<1$. Hence, $(\mathcal{F},X)$ has a globally attracting fixed point. Theorem \ref{gafp} together with lemma \ref{lemma6} then imply that $(\mathcal{D}_{S}\mathcal{F},X|_S^T)$ also has a globally attracting fixed point. The undelayed dynamical network $(\mathcal{U}_{\mathcal{D}_{S}\mathcal{F}},X|_S)$ is then globally stable by theorem \ref{theorem3} and the result follows from equation (\ref{eq7}) as $(\mathcal{U}_{\mathcal{D}_S\mathcal{F}},X|_S)=(\mathcal{R}_S\mathcal{F},X|_S)$.
\end{proof}

A proof of theorem \ref{theorem6} is the following.

\begin{proof}
Suppose $S\in bas(\Gamma_{\mathcal{F}})$. If $A$ is a stability matrix of $(\mathcal{F},X)$ with $\rho(A)<1$ then theorem \ref{theorem5} implies $(\mathcal{R}_S\mathcal{F},X|_S)$ has a globally attracting fixed point. Therefore, suppose $\tilde{A}$ is a stability matrix of $(\mathcal{R}_S\mathcal{F},X|_S)$ with the property $\rho(\tilde{A})<1$.

Since $S\in bas(\Gamma_{\mathcal{F}})$ then it follows that $(\mathcal{D}_{S}\mathcal{F},X|_S^T)\in nd(X|_S^T)$. As the undelayed network $(\mathcal{U}_{\mathcal{D}_S\mathcal{F}},X|_S)=(\mathcal{R}_S\mathcal{F},X|_S)$ then theorem \ref{theorem3.45} implies that there is a stability matrix $A$ of $(\mathcal{U}_{\mathcal{D}_S\mathcal{F}},X|_S)$ such that $\rho(A)<1$. Lemma \ref{lemma6} combined with theorem \ref{gafp} then imply that the dynamical network $(\mathcal{F},X)$ has a globally attracting fixed point.
\end{proof}

Note that in the proof of theorem \ref{theorem4} the assumption that $S\in bas(\Gamma_{\mathcal{F}})$ implies that the network $(\mathcal{D}_{S}\mathcal{F},X|_S^T)\in nd(X|_S^T)$. Since $(\mathcal{U}_{\mathcal{D}_S\mathcal{F}},X|_S)=(\mathcal{R}_S\mathcal{F},X|_S)$ then theorem \ref{theorem3.45} directly implies the result of theorem \ref{theorem4}.

\section{Concluding Remarks}
This paper continues the analysis of useful transformations of dynamical networks initiated in \cite{BW2012}. Here, we analyze time-delayed dynamical networks and their global stability. Because analyzing the global stability of a network requires knowledge of its spectral radius, rather than the knowledge of its entire spectrum, we are able to introduce a new much simpler class of dynamical network transformations, which we call network restrictions. Such restrictions allow us to study the global stability of dynamical networks in a simpler and computationally more efficient way compared with the isospectral networks expansions found in \cite{BW2012}. However, we note that as such expansions preserve the entire spectrum of the network they provide more information about the dynamical network.

A key ingredient in our procedure of dynamical network restrictions is the notion of a basic structural set of the network's graph of interactions. These sets form a subclass of structural sets introduced in \cite{BW2012}. Our approach here allows us to prove that dynamical networks and their time-delayed versions with non-distributed delays are globally stable or unstable simultaneously. We observe that this is not true in general for dynamical networks with distributed delays.

The theory developed in this paper is illustrated by various examples of Cohen-Grossberg neural networks. Importantly, this approach to analyzing the global stability of networks is not limited in any way to this class of networks and could be applied to any (time-delayed or undelayed) dynamical network. We fully expect that this theory will prove to be useful for the analysis of other features, structures, and dynamics of networks.

\end{document}